\newcommand{\C}{{\mathbb{C}}}
\newcommand{\R}{{\mathbb{R}}}
\newtheorem{theorem}{Theorem}[section]
\newtheorem{corollary}[theorem]{Corollary}
\newtheorem{lemma}[theorem]{Lemma}
\newtheorem{proposition}[theorem]{Proposition}
\newtheorem{conjecture}[theorem]{Conjecture}
\newtheorem{definition}[theorem]{Definition}
\numberwithin{figure}{section}
\numberwithin{theorem}{section}
\begin{document}

\author{Heather M. Russell and Julianna S. Tymoczko}
\address{Department of Mathematics, University of Iowa, Iowa City, 
IA 52242}
\email{tymoczko@math.uiowa.edu}
\email{hrussell@math.uiowa.edu}
\title{Springer representations on the Khovanov Springer varieties}

\maketitle

\begin{abstract}
Springer varieties are studied because their cohomology carries a natural action of the symmetric group $S_n$ and their top-dimensional cohomology is irreducible.  In his work on tangle invariants, Khovanov constructed a family of Springer varieties $X_n$ as subvarieties of the product of spheres $(S^2)^n$.  We show that if $X_n$ is embedded antipodally in $(S^2)^n$ then the natural $S_n$-action on $(S^2)^n$ induces an $S_n$-representation on the image of $H_*(X_n)$.  This representation is the Springer representation.  Our construction admits an elementary (and geometrically natural) combinatorial description, which we use to prove that the Springer representation on $H_*(X_n)$ is irreducible in each degree. We explicitly identify the Kazhdan-Lusztig basis for the irreducible representation of $S_n$ corresponding to the partition  $(n/2,n/2)$.
\end{abstract}

\section{Introduction}

Springer varieties are a collection of varieties that arise in two mathematical areas: geometric representation theory and low-dimensional topology.  Springer varieties were originally investigated because their cohomology carries a graded representation of the symmetric group $S_n$ and their top-dimensional cohomology is irreducible.  Many geometric constructions of the Springer representation exist; \cite{CG} has a survey.  One thing geometric constructions share is that explicit calculations are impossible outside of special cases.\footnote{Using an algebraic combinatorial construction of the cohomology ring, Garsia-Procesi identified the Springer representation in all degrees.}

In important recent work \cite{K2, K1} Khovanov studied a family of rings related to an invariant of tangle cobordisms.  He discovered the center of each ring was isomorphic to the cohomology of the Springer variety corresponding to the partition $(n/2,n/2)$, which we call $X_n$.  He conjectured that $X_n$ could be realized as a subspace of the $n$-fold product of spheres $(S^2)^{n}$, with components homeomorphic to $(S^2)^{n/2}$ and indexed by noncrossing matchings; this follows from recent work of Cautis-Kamnitzer \cite{Cautis-Kamnitzer} as shown in the Appendix of this paper.  Permutation representations arise in the work of Khovanov et alia, but for the group $S_{n/2}$ rather than $S_n$.  

This paper brings together the two themes of geometric representation theory and low-dimensional topology.  We obtain an explicit geometric and combinatorial construction of the Springer action for $X_n$.  Unlike other presentations of Springer representations, our construction is clear and elementary---both geometrically and combinatorially.  We use this explicit construction to identify the Springer representations in {\it every} degree of homology and to identify the Kazhdan-Lusztig basis explicitly and simply in top-degree homology.  

The variety $(S^2)^n$ and its homology have a natural $S_n$-action given by permuting the copies of $S^2$.  We use an antipodal map $\gamma: X_n \rightarrow (S^2)^{n}$ to embed $X_n$ in the product of spheres and prove that $S_n$ restricts to a permutation action on the image $\gamma_* (H_*(X_n))$.  Not every geometric map $\alpha: X_n \rightarrow (S^2)^n$ satisfies this property; indeed if $\alpha$ is the identity embedding then $\alpha_* (H_*(X_n))$ is in general not an $S_n$-subrepresentation of $H_*((S^2)^n)$.  

We prove four main theorems in this paper:
\begin{itemize}
\item that there is a well-defined $S_n$-action on $\gamma_*(H_*(X_n))$ (Theorem \ref{well-defined representation}); 
\item that the geometric representation is described combinatorially as an action of $S_n$ on noncrossing matchings that permutes the strands of the matchings (Theorem \ref{well-defined representation}); 
\item that the $S_n$-action on $\gamma_* (H_*(X_n))$ is the Springer representation (Theorem \ref{springer repn}); 
\item and that the Springer representation on $H_k(X_n)$ is the irreducible representation corresponding to the partition $(n-k,k)$ (Theorem \ref{springer repn}).  
\end{itemize}

A search of the literature shows that ``the Springer representation" can mean any of: Springer's original representation on the cohomology of Springer varieties \cite{Springer}, any other construction of the same representation, 
a construction of Springer's representation tensored with the sign representation \cite{Lusztig, Slodowy, Borho-Macpherson}, 
or a homology representation that is isomorphic to Springer's cohomology representation \cite{KL}.  (Hotta proves many of these equivalences in \cite{Hotta}.)  

Our construction gives an $S_n$-representation on homology with complex coefficients.  
We prove it is the Springer representation by comparing it explicitly with the de Concini-Procesi construction of Springer representations \cite{DeConciniProcesi}, using tools from Garsia-Procesi's combinatorial analysis of de Concini-Procesi's work \cite{GP}.  Our description of the homology of the Springer variety $X_n$ uses techniques from skein modules developed by Russell in \cite{R}, unlike de Concini-Procesi's presentation, which relies on Borel's quotient presentation of the cohomology of the flag variety.  

We exploit two different antipodal maps $X_n \rightarrow (S^2)^n$ in Section \ref{antipodal embeddings}.  The first is a geometric embedding of the entire variety, defined to be the identity on even coordinates and the antipodal map on odd coordinates.  The second is an embedding of each component of $X_n$; it is more natural in topological contexts and useful for computations but does not extend to all of $X_n$.  Corollary \ref{finally} proves the image $\gamma_*(H_*(X_n))$ is the span of the images of antipodal maps on each component, permitting us to use both types of antipodal maps in our calculations. 

The product basis of $H_*((S^2)^n)$ allows us to identify the $S_n$-representation on $\gamma_*(H_*(X_n))$ with a subspace of Young tabloids, a classical construction of $S_n$-representations.  The tabloid sums corresponding to basis elements in $\gamma_*(H_*(X_n))$ are not the classical basis for Specht modules; our methods allow us to conclude that we obtain the Kazhdan-Lusztig basis in top degree, and we conjecture that this is true in all degrees.  We show that for each $k$ the appropriate Specht module intersects $\gamma_*(H_k(X_n))$ nontrivially.  The dimensions of the two subspaces agree, so the image $\gamma_*(H_k(X_n))$ is irreducible and the $S_n$-representation we defined on $H_*(X_n)$ is the Springer representation.

We address this paper to several audiences: geometric representation theorists, low-dimensional topologists, and classical representation theorists.  Consequently we include material that each audience may find remedial.

The authors thank Charlie Frohman for stimulating conversations, Gary Kennedy and Tadeusz Januszkiewicz for helpful comments, Mikhail Khovanov for useful input, and Zhuojie Li, who made the observation that  inspired this work in an REU project.  The first author was partially supported by NSF VIGRE grant DMS-0602242; the second author was partially supported by NSF grant DMS-0801554.

\section{The Springer fiber $X_n$} \label{Springer topology}

\subsection{Classical results about the Springer action on $H_*(X_n)$}

Springer varieties are subvarieties of the flag variety, which is the collection of nested subspaces $V_1 \subseteq V_2 \subseteq \cdots \subseteq V_{n-1} \subseteq \C^n$ so that each $V_i$ is an $i$-dimensional subspace of $\C^n$.  A {\em nilpotent} linear operator $X: \C^n \rightarrow \C^n$ is a linear operator whose only eigenvalue is zero.  The Springer variety associated to a nilpotent linear operator  $X$ is the collection of flags fixed by $X$, namely 
\[\{\textup{Flags }V_{\bullet}: X V_i \subseteq V_i \textup{ for all } i\}.\]
If two linear operators are conjugate then the corresponding Springer varieties are homeomorphic.  The conjugacy class corresponding to $X$ is determined by the partition of $n$ given by the Jordan blocks of $X$.  We only study representatives of the homeomorphism class of a Springer variety, so for our purposes Springer varieties are indexed by partitions.

Let $\lambda = (\lambda_1, \ldots, \lambda_n)$ and $\mu= (\mu_1, \ldots, \mu_n)$ be partitions of $n$ written in decreasing order $\lambda_1 \geq \lambda_2 \geq \cdots \lambda_n \geq 0$ with some parts possibly zero.  We often omit terminal zeros in our notation.  We also interchangeably use the vocabulary for partitions and for Young diagrams (collections of left- and top-aligned boxes so that the $i^{th}$ row has $\lambda_i$ boxes for each $i$); hence $\lambda_i$ can be called either the $i^{th}$ part of the partition or the $i^{th}$ row of the Young diagram.  The Springer variety corresponding to $\lambda$ is denoted $X_{\lambda}$.  The irreducible representation corresponding to $\lambda$ is discussed in a later section; we often call the representation $\lambda$ as well. 

We use three facts about the Springer representation.  The first is a classical result of Springer theory.

\begin{proposition} {\bf (Springer)}
The $S_n$-module $H^{\textup{top}}(X_{\lambda})$ is the irreducible representation corresponding to $\lambda$.
\end{proposition}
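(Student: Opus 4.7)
The plan is to first construct the $S_n$-action on $H^*(X_\lambda)$, then pin down its top-degree piece. Following Springer, the Weyl group $S_n$ acts on $H^*(X_\lambda)$ via monodromy on the Grothendieck--Springer resolution $\widetilde{\mathfrak{g}} \to \mathfrak{g}$: the restriction over the regular semisimple locus is a principal $S_n$-cover, and specialization through nearby cycles produces an $S_n$-action on the stalks of $R\pi_* \underline{\mathbb{Q}}$, where $\pi : T^*(G/B) \to \mathcal{N}$ is the Springer resolution. By proper base change, the stalk at a nilpotent $X$ of Jordan type $\lambda$ is $H^*(X_\lambda)$.

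Next I would identify $H^{\textup{top}}(X_\lambda)$ with the $\mathbb{Q}$-span of the fundamental classes $[C_T]$ of the top-dimensional irreducible components of $X_\lambda$. By Spaltenstein's theorem these components are in bijection with standard Young tableaux of shape $\lambda$, so $\dim H^{\textup{top}}(X_\lambda) = f^\lambda = \dim S^\lambda$, which matches the target irreducible but does not yet identify it.

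To confirm $H^{\textup{top}}(X_\lambda) \cong S^\lambda$ rather than some other $S_n$-module of the same dimension, I would compute the character. The Lefschetz fixed-point formula applied to $w \in S_n$ acting on $X_\lambda$ yields a count of $w$-fixed flags in $X_\lambda$, and matching these counts to the character values of $S^\lambda$ via the Green polynomial formalism (or equivalently the Hotta--Springer Frobenius-type formula) finishes the identification. A heavier but cleaner alternative is the Borho--MacPherson decomposition of $R\pi_* \underline{\mathbb{Q}}$: top-degree cohomology isolates the trivial local system on the dense orbit in $\overline{\mathcal{O}_\lambda}$, which is precisely $S^\lambda$ under the Springer correspondence.

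The main obstacle is constructing the $S_n$-action itself: no permutation action on $X_\lambda$ induces it, so one must invoke either sheaf-theoretic machinery (nearby cycles, perverse sheaves, the decomposition theorem) or the algebraic combinatorial route of Garsia--Procesi, who realize $H^*(X_\lambda)$ as a quotient of a polynomial ring on which $S_n$ acts by permuting variables. Either approach requires substantial setup well beyond the statement itself, which is a large part of why the present paper's explicit geometric/combinatorial construction of the same action is worth having.
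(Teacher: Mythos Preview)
The paper does not prove this proposition at all: it is quoted as a classical result of Springer (with citation \cite{Springer}) and used as a black box in Section~2.1. Your proposal sketches how one might actually prove Springer's theorem from scratch, but that is not what the paper does here, nor what it asks for.

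Your outline is a reasonable high-level survey of the standard approaches (Grothendieck--Springer monodromy, Spaltenstein's component count, Borho--MacPherson decomposition, Garsia--Procesi coinvariants), and nothing in it is wrong as a roadmap. But you should recognize when a paper is citing a known theorem rather than proving it: the attribution ``{\bf (Springer)}'' in the proposition header, the absence of a proof environment following it, and the surrounding text (``We use three facts about the Springer representation. The first is a classical result of Springer theory.'') all signal that this is background, not a result of the present paper. The paper's own contribution is the explicit construction for the $(n/2,n/2)$ case in later sections, and \emph{that} is where the new proofs live.
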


Moreover the dimension of $X_{\lambda}$ is determined in \cite{Spalt} (among others) to be
\begin{equation}\label{dimension formula}
\sum_{i=1}^n (i-1)\lambda_i.
\end{equation}

The second was proven by De Concini-Procesi in \cite[page 213]{DeConciniProcesi}:

\begin{proposition} {\bf (De Concini-Procesi)}
The multiplicity of the irreducible representation $\mu$ in the {\em ungraded} representation $H^*(X_{\lambda})$ is the number of fillings of $\mu$ with the integers $\{1^{\lambda_1}, 2^{\lambda_2}, \ldots, n^{\lambda_n}\}$ so that rows increase weakly and columns increase strictly.
\end{proposition}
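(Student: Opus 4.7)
My plan is to identify the ungraded $S_n$-module $H^*(X_\lambda)$ with the classical permutation module $M^\lambda = \mathrm{Ind}_{S_\lambda}^{S_n}(\mathbf{1})$ on $\lambda$-tabloids, where $S_\lambda = S_{\lambda_1} \times \cdots \times S_{\lambda_n}$ is the Young subgroup. Once this identification is in hand, the proposition reduces immediately to Young's rule from classical $S_n$-representation theory: the multiplicity of the irreducible Specht module $S^\mu$ in $M^\lambda$ equals the Kostka number $K_{\mu\lambda}$, which by definition counts semistandard Young tableaux of shape $\mu$ and content $(1^{\lambda_1}, 2^{\lambda_2}, \ldots, n^{\lambda_n})$---precisely the row-weakly-increasing, column-strictly-increasing fillings described in the proposition.

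To establish $H^*(X_\lambda) \cong M^\lambda$ as ungraded $S_n$-modules, I would compute and compare characters. Since the Springer action is not a literal action on $X_\lambda$ but arises as monodromy from the Springer resolution $\pi\colon \widetilde{\mathcal{N}} \to \mathcal{N}$, the natural approach is to pick a maximal torus $T$ acting compatibly so that $S_n$ permutes $X_\lambda^T$ via the monodromy lift. Spaltenstein's affine paving of $X_\lambda$ implies $H^*(X_\lambda)$ is concentrated in even degrees with basis indexed by $X_\lambda^T$, and a direct count shows $|X_\lambda^T| = n!/(\lambda_1! \cdots \lambda_n!) = \dim M^\lambda$, with the fixed points in natural bijection with $\lambda$-tabloids. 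The Lefschetz fixed-point theorem then computes $\chi_{H^*(X_\lambda)}(w) = |X_\lambda^{T,w}|$ for each $w \in S_n$, and counting $w$-fixed flags yields the number of $w$-fixed tabloids, which is precisely $\chi_{M^\lambda}(w)$.

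The main obstacle is showing that the bijection between $X_\lambda^T$ and $\lambda$-tabloids intertwines the Springer monodromy action on cohomology with the literal permutation action of $S_n$ on tabloids. This compatibility is the geometric heart of Hotta's approach to the Springer correspondence and requires careful tracking of how monodromy of $\pi$ over loops in the regular semisimple locus permutes the fixed-point basis; the risk is that the geometric action differs from the tabloid action by a sign character or by some other outer twist. An alternative that sidesteps this bookkeeping is to invoke the Borho--MacPherson decomposition $R\pi_* \C_{\widetilde{\mathcal{N}}} \cong \bigoplus_\mu S^\mu \otimes \textup{IC}(\overline{\mathcal{O}}_\mu)$, so that the stalk at an orbit of Jordan type $\lambda$ becomes $H^*(X_\lambda) \cong \bigoplus_\mu S^\mu \otimes (\textup{stalk of } \textup{IC}(\overline{\mathcal{O}}_\mu) \textup{ at } \mathcal{O}_\lambda)$; in type $A$ these ungraded stalk dimensions are known to be Kostka numbers, which yields the result directly at the cost of substantially more machinery.
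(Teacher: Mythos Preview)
The paper does not give its own proof of this proposition: it is stated as a result of De Concini--Procesi with a bare citation to \cite[page 213]{DeConciniProcesi}, and the paper then \emph{uses} it (together with the Garsia--Procesi surjection) to deduce Proposition~\ref{GP identification of Springer rep}. So there is nothing in the paper to compare your argument against.

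Your outline is a reasonable route to the result, and the identification $H^*(X_\lambda)\cong M^\lambda$ as ungraded $S_n$-modules is indeed what lies behind the statement: the graded Frobenius characteristic of $H^*(X_\lambda)$ is a modified Hall--Littlewood polynomial specializing at $q=1$ to the complete homogeneous symmetric function $h_\lambda$, whose Schur expansion has coefficients $K_{\mu\lambda}$. That said, De Concini--Procesi's actual argument is rather different in flavor from either of your two sketches. They give an explicit presentation of $H^*(X_\lambda)$ as a quotient of the polynomial ring $\C[x_1,\ldots,x_n]$ by an $S_n$-stable ideal and analyze the resulting graded $S_n$-module algebraically; the Kostka multiplicities fall out of their filtration and basis, not from a fixed-point or perverse-sheaf computation.

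Your own honest caveat is the right one: the step ``monodromy on $H^*$ matches the literal permutation action on tabloids'' is exactly where the sign convention lives, and in some constructions (Lusztig, Borho--MacPherson) one gets $M^\lambda \otimes \mathrm{sgn}$ rather than $M^\lambda$. The proposition as stated in the paper is in the De Concini--Procesi normalization (trivial representation in degree $0$, the partition $\lambda$ itself in top degree), so if you pursue the fixed-point approach you must check that your monodromy convention agrees with theirs --- otherwise you will obtain $K_{\mu'\lambda}$ in place of $K_{\mu\lambda}$. Your alternative via the Springer-sheaf decomposition is correct but, as you say, brings in far more machinery than the original algebraic proof requires.
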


Thus we deduce the following lemma.

\begin{lemma} \label{two-row multiplicities}
If $\lambda = (n-j,j)$ is a two-row partition then the multiplicity of the irreducible representation $\mu$ in $H^*(X_{\lambda})$ is either zero or one.  It is one if $\mu$ is a two-row partition of $n$ with $\mu_1 \geq \lambda_1$; it is zero otherwise.
\end{lemma}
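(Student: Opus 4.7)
The plan is to apply the preceding De Concini-Procesi proposition directly and then reduce the combinatorics of fillings to an essentially trivial count. For $\lambda=(n-j,j)$ the content multiset is $\{1^{n-j},2^j\}$, so every valid filling of $\mu$ uses only the symbols $1$ and $2$.

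First I would observe that if $\mu$ has three or more nonzero rows, then any column of height $\geq 3$ would require three strictly increasing entries drawn from $\{1,2\}$, which is impossible. Hence $\mu$ must have at most two rows, and (since $\mu$ is a partition of $n$) we may write $\mu=(\mu_1,\mu_2)$ with $\mu_1+\mu_2=n$ and $\mu_1\geq \mu_2\geq 0$.

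Next I would analyze the fillings of such a two-row $\mu$. The strict column condition forces each column of height $2$ to have a $1$ on top and a $2$ on the bottom; in particular the second row of $\mu$ is entirely filled with $2$'s, contributing exactly $\mu_2$ twos. The first row, being weakly increasing, consists of some number of $1$'s followed by some number of $2$'s. Matching the content $\{1^{n-j},2^j\}$, the first row must contain exactly $j-\mu_2$ twos and $n-j$ ones, and this arrangement is uniquely determined once $\mu$ is fixed. The filling exists if and only if $j-\mu_2\geq 0$, equivalently $\mu_1=n-\mu_2\geq n-j=\lambda_1$. (The condition $j-\mu_2\leq \mu_1-\mu_2$ is automatic from $\mu_1\geq \mu_2$ together with $j\leq n/2\leq \mu_1$.)

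Combining these two observations gives precisely the statement of the lemma: the multiplicity is $0$ unless $\mu$ is a two-row partition of $n$ with $\mu_1\geq \lambda_1$, in which case a unique semistandard filling exists and the multiplicity equals $1$. There is no real obstacle here; the argument is a direct unpacking of the De Concini-Procesi formula, and the only substantive point is verifying that the unique candidate filling really does satisfy both the weakly-increasing-rows and strictly-increasing-columns conditions, which follows immediately from the description above.
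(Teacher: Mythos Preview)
Your proof is correct and follows essentially the same approach as the paper's own proof: both apply the De Concini--Procesi multiplicity formula, observe that only the symbols $1$ and $2$ are available (forcing $\mu$ to have at most two rows), and then identify the unique semistandard filling when $\mu_2\leq\lambda_2$. Your write-up is somewhat more explicit than the paper's in verifying that the candidate filling actually satisfies the column-strictness condition (your check that $j-\mu_2\leq\mu_1-\mu_2$), but the underlying argument is the same.
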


\begin{proof}
The Young diagram $\mu$ cannot be filled with $\{1^{\lambda_1}, 2^{\lambda_2}\}$ and have strictly-increasing columns unless $\mu$ has at most two rows and the second row has size at most $\mu_2 \leq \lambda_2$.  In this case there is exactly one filling with strictly-increasing columns and weakly-increasing rows, namely the filling with $1^{\lambda_1}, 2^{\lambda_2 - \mu_2}$ on the top row and $2^{\mu_2}$ on the second row.
\end{proof}

The third fact was proven by Garsia-Procesi in \cite[Equation 4.2]{GP}:

\begin{proposition} {\bf (Garsia-Procesi)}
Suppose that $\lambda$ and $\mu$ are partitions so that for each $i$ the sums $\mu_1 + \mu_2 + \cdots + \mu_i \geq \lambda_1 + \lambda_2 + \cdots + \lambda_i$.  Then there is a graded $S_n$-equivariant surjection $H^*(X_{\lambda}) \rightarrow \hspace{-.7em} \rightarrow H^*(X_{\mu})$.
\end{proposition}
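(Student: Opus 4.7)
The plan is to use the polynomial-ring presentation of $H^*(X_\lambda)$ as a graded $S_n$-module. Specifically, $H^*(X_\lambda) \cong R_\lambda := \C[x_1,\ldots,x_n]/I_\lambda$, where $I_\lambda$ is the \emph{Tanisaki ideal}, an $S_n$-stable homogeneous ideal whose generators are explicit ``partial'' elementary symmetric polynomials, and the $S_n$-action on $R_\lambda$ is variable permutation. This presentation was established by De~Concini--Procesi in the very paper cited above, so using it is compatible with the Springer module structure of the proposition. Concretely, the generators of $I_\lambda$ can be taken in the form $e_r(x_S)$, one for every $k$-subset $S \subseteq \{x_1,\ldots,x_n\}$ and every $r$ above a threshold of the shape $k - d_k(\lambda)$, where $d_k(\lambda)$ is a combinatorial invariant of $\lambda$ depending only on its tail $(\lambda_{n-k+1},\ldots,\lambda_n)$ (most cleanly phrased in terms of the conjugate partition $\lambda'$).

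Given this presentation, it suffices to prove the inclusion of ideals $I_\lambda \subseteq I_\mu$. Indeed, the identity map on $\C[x_1,\ldots,x_n]$ then descends to a surjection $R_\lambda \twoheadrightarrow R_\mu$, and this surjection is automatically graded (since both ideals are homogeneous) and $S_n$-equivariant (since both ideals are $S_n$-stable and the quotients carry the permutation action).

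The heart of the argument is therefore a combinatorial claim about generators. The dominance hypothesis $\sum_{i \leq j}\mu_i \geq \sum_{i \leq j}\lambda_i$ for all $j$ is equivalent to the reverse tail-sum inequality $\sum_{i > j}\mu_i \leq \sum_{i > j}\lambda_i$ for all $j$, and I would show this forces $d_k(\mu) \geq d_k(\lambda)$ for every $k$. Since the threshold $k - d_k(\cdot)$ decreases as $d_k$ grows, every generator $e_r(x_S)$ of $I_\lambda$ also satisfies the defining inequality for $I_\mu$ and thus lies in $I_\mu$, yielding $I_\lambda \subseteq I_\mu$.

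The main obstacle I anticipate is precisely this combinatorial bookkeeping: one must pin down the exact definition of $d_k$ in terms of $\lambda$ (or its conjugate) and then propagate the dominance inequalities uniformly through to $d_k(\mu) \geq d_k(\lambda)$ for every $k$. A practical simplification, if the uniform comparison proves awkward, is to reduce first to the case where $\mu$ covers $\lambda$ in the dominance order---i.e.\ $\mu$ is obtained from $\lambda$ by moving a single box up in the Young diagram---handle this single-box case by direct inspection of generators, and then compose the surjections along a saturated chain from $\lambda$ to $\mu$ to obtain the general statement.
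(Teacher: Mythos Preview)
The paper does not actually prove this proposition; it is quoted as a known fact from Garsia--Procesi \cite[Equation 4.2]{GP} and used as a black box in the proof of Proposition~\ref{GP identification of Springer rep}. So there is no ``paper's own proof'' to compare against.

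That said, your proposal is correct and is essentially the argument Garsia--Procesi themselves give. The presentation $H^*(X_\lambda) \cong \C[x_1,\ldots,x_n]/I_\lambda$ with $I_\lambda$ the Tanisaki ideal is exactly the De~Concini--Procesi/Tanisaki setup, and the inclusion $I_\lambda \subseteq I_\mu$ under dominance is the content of the cited equation. Your combinatorial reduction is sound: writing $d_k(\lambda) = \lambda'_{n-k+1} + \lambda'_{n-k+2} + \cdots + \lambda'_n$ in terms of the conjugate partition, dominance $\mu \trianglerighteq \lambda$ is equivalent to $\lambda' \trianglerighteq \mu'$, which gives the reverse tail-sum inequality on conjugates and hence $d_k(\mu) \leq d_k(\lambda)$ for all $k$. (Note the direction: smaller $d_k$ means a \emph{lower} threshold $k - d_k$, hence \emph{more} generators, hence a larger ideal---so $I_\lambda \subseteq I_\mu$ as you want. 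Your sentence ``$d_k(\mu) \geq d_k(\lambda)$'' has the inequality reversed, but the conclusion about thresholds and ideal containment is stated correctly, so this looks like a typo rather than a genuine error.) The single-box/saturated-chain reduction you mention at the end is unnecessary once the uniform inequality on $d_k$ is established, but it is a harmless alternative.
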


From this together with the previous we conclude:

\begin{proposition} \label{GP identification of Springer rep}
Let $\lambda$ be any partition of $n$ with at most two rows.  Then $H^k(X_{\lambda})$ is the irreducible representation corresponding to the partition $(n-k,k)$ for each $k \leq \lambda_2$.
\end{proposition}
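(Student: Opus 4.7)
My plan is to combine the three preceding facts (Springer's top-degree identification, the Garsia-Procesi surjection, and Lemma \ref{two-row multiplicities}) in a single accounting argument, without any further machinery. Let $\lambda = (n-j,j)$ with $j = \lambda_2$, so that by the dimension formula (\ref{dimension formula}) the complex dimension of $X_\lambda$ equals $j$. The strategy is: first, exhibit each irreducible $(n-k,k)$ as a summand of $H^k(X_\lambda)$ for every $k \leq j$; then, use Lemma \ref{two-row multiplicities} to see there is no room in $H^*(X_\lambda)$ for anything else.

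For the first step, fix $k \leq j$ and note that $\mu = (n-k,k)$ dominates $\lambda = (n-j,j)$: both partitions have first-two-part-sum equal to $n$, and $n - k \geq n - j$ exactly because $k \leq j$. Hence the Garsia-Procesi proposition supplies a graded $S_n$-equivariant surjection $H^*(X_\lambda) \twoheadrightarrow H^*(X_{(n-k,k)})$. Restricting to degree $k$ — which is the top degree of $X_{(n-k,k)}$ by the dimension formula — Springer's proposition identifies $H^k(X_{(n-k,k)})$ with the irreducible $(n-k,k)$. Thus $H^k(X_\lambda)$ surjects onto $(n-k,k)$, and by semisimplicity of complex $S_n$-representations, $(n-k,k)$ embeds in $H^k(X_\lambda)$.

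For the second step, Lemma \ref{two-row multiplicities} says that in the ungraded representation $H^*(X_\lambda)$, only the irreducibles $(n-k,k)$ with $0 \leq k \leq j$ appear, and each with multiplicity exactly one. The previous paragraph has already placed the single copy of $(n-k,k)$ in degree $k$, so it cannot occur in any other degree. Consequently $H^k(X_\lambda)$ contains no irreducible summand other than $(n-k,k)$ itself, giving $H^k(X_\lambda) \cong (n-k,k)$ as claimed.

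I do not anticipate a real obstacle here: everything is a bookkeeping argument once one checks that the dominance inequality of the Garsia-Procesi proposition runs in the right direction and that $k$ is indeed the top cohomological degree of $X_{(n-k,k)}$. The one place to be careful is the semisimplicity step — one needs to know that quotients are subrepresentations — but over $\mathbb{C}$ for a finite group this is automatic, so no complication arises.
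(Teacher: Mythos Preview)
Your argument is correct and uses the same three ingredients as the paper (Springer's top-degree identification, the Garsia--Procesi surjection, and Lemma~\ref{two-row multiplicities}), but you organize them differently. The paper argues by induction on $\lambda_2$: it applies the Garsia--Procesi surjection only one step at a time, from $\lambda=(n-j,j)$ to $\lambda'=(n-j+1,j-1)$, observes that the ungraded multiplicities $m_{\mu,\lambda}$ and $m_{\mu,\lambda'}$ agree for all $\mu$ with $\mu_2\le\lambda_2'$, and concludes that this surjection is an isomorphism in every degree $k<j$; the top degree is handled separately by Springer. You instead surject $H^*(X_\lambda)$ directly onto $H^*(X_{(n-k,k)})$ for each $k\le j$, pick off the irreducible $(n-k,k)$ from the top degree of the target, and then let the total multiplicity count from Lemma~\ref{two-row multiplicities} force everything else to vanish. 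Your route avoids the induction and is slightly more economical; the paper's route has the side benefit of exhibiting the successive surjections $H^k(X_\lambda)\twoheadrightarrow H^k(X_{\lambda'})$ as isomorphisms below top degree, which is a little more structural information than you extract.
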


\begin{proof}
The proof is by induction on $\lambda_2$.  If $\lambda_2=0$ then the Springer variety $X_{\lambda}$ consists of a single flag, and $H^k(X_{\lambda})$ is the trivial representation when $k=0$ and is zero otherwise. 
This proves the base case.

Now assume the claim holds for the partition $\lambda' = (n-j+1, j-1)$ and let $\lambda = (n-j,j)$.  The variety $X_{\lambda}$ has dimension $\lambda_2$ by the formula in Equation \eqref{dimension formula}. The top-dimensional cohomology $H^{\lambda_2}(X_{\lambda})$ is the irreducible representation corresponding to $\lambda$ by the first property of Springer varieties.  Let $m_{\mu, \lambda}$ denote the multiplicity of the irreducible representation $\mu$ in $H^*(X_{\lambda})$ and similarly for $m_{\mu, \lambda'}$.  The cohomology surjection $H^k(X_{\lambda}) \rightarrow \hspace{-0.7em} \rightarrow H^k(X_{\lambda'})$ is an $S_n$-equivariant map for each $k$.  Hence $m_{\mu, \lambda} \geq m_{\mu, \lambda'}$ for each irreducible $\mu$.  Lemma \ref{two-row multiplicities} proved that $m_{\mu, \lambda} = m_{\mu, \lambda'}$ for each $\mu$ with $\mu_2 \leq \lambda'_2$, so the cohomology surjection must be an isomorphism when $k < j$.  Induction proves the claim.
\end{proof}

The number of standard fillings of $(n-k,k)$ is the rank of the irreducible representation corresponding to the partition $(n-k,k)$.  Hence we obtain a corollary.

\begin{corollary}\label{numfill}
If $\lambda$ is a two-row partition $(n-j,j)$ and $k \leq j$ then the rank of $H^k(X_{\lambda})$ is the number of standard fillings of $(n-k,k)$, namely fillings of $(n-k,k)$ with $\{1,2,\ldots,n\}$ without repetition so that both rows and columns increase.
\end{corollary}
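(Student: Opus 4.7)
The plan is to deduce Corollary \ref{numfill} essentially immediately from Proposition \ref{GP identification of Springer rep} together with one standard fact from the representation theory of the symmetric group. The proposition just established that, for $\lambda = (n-j,j)$ and $k \leq j$, the cohomology $H^k(X_\lambda)$ is the irreducible $S_n$-representation indexed by $(n-k,k)$. So proving the corollary reduces to identifying the dimension of that irreducible with the number of standard fillings of $(n-k,k)$.

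First I would cite (or simply invoke as well-known) the classical result that for any partition $\mu$ of $n$, the dimension of the irreducible $S_n$-representation $S^\mu$ equals the number of standard Young tableaux of shape $\mu$. This can be seen either via the hook length formula or via the basis of polytabloids used to construct the Specht module $S^\mu$; either way it is a standard fact the paper will almost certainly want to appeal to without reproof.

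Applying this to $\mu = (n-k,k)$, the dimension of the irreducible representation corresponding to $(n-k,k)$ equals the number of standard fillings of $(n-k,k)$ with $\{1,2,\ldots,n\}$, i.e. the number of ways to place $\{1,2,\ldots,n\}$ in the diagram $(n-k,k)$ with both rows and columns strictly increasing and no repetition. Combining this with Proposition \ref{GP identification of Springer rep} gives the claimed equality of ranks.

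There is no real obstacle here: the corollary is a one-line consequence once the classical enumeration of the basis of the irreducible $S_n$-module is in hand. The only thing worth being careful about is the hypothesis $k \leq j$, which is exactly the range in which Proposition \ref{GP identification of Springer rep} applies; outside this range $H^k(X_\lambda)$ vanishes by the dimension formula \eqref{dimension formula}, so the statement would need rephrasing (and the corollary as stated correctly restricts to $k \leq j$).
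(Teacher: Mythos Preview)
Your proof is correct and matches the paper's approach exactly: the paper also derives this corollary in one line from Proposition \ref{GP identification of Springer rep} together with the classical fact that the dimension of the irreducible $S_n$-representation indexed by $(n-k,k)$ is the number of standard fillings of that shape.
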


Most of the classical theory of Springer representations is written for cohomology rather than homology.  The next lemma is included for the sake of completeness.

\begin{lemma}
If $k$ satisfies $0\leq k \leq n$ then over complex coefficients $H^k(X_n)\cong H_k(X_n)$. 
\end{lemma}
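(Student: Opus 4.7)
The plan is to apply the Universal Coefficient Theorem over the field $\C$. First I would note that $X_n$ is compact: by Khovanov's description (to be developed in the next sections, and coming from the realization of $X_n$ as a finite union of pieces each homeomorphic to $(S^2)^{n/2}$ indexed by noncrossing matchings), $X_n$ has the homotopy type of a finite CW complex. In particular its integral homology groups $H_k(X_n; \mathbb{Z})$ are finitely generated.

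Second, the Universal Coefficient Theorem for cohomology gives a (non-canonically) split short exact sequence
\begin{equation*}
0 \longrightarrow \textup{Ext}^1_{\mathbb{Z}}(H_{k-1}(X_n;\mathbb{Z}),\C) \longrightarrow H^k(X_n;\C) \longrightarrow \textup{Hom}_{\mathbb{Z}}(H_k(X_n;\mathbb{Z}),\C) \longrightarrow 0.
\end{equation*}
Because $\C$ is a divisible (hence injective) abelian group, the Ext term vanishes, so $H^k(X_n;\C) \cong \textup{Hom}_{\mathbb{Z}}(H_k(X_n;\mathbb{Z}),\C)$.

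Third, I would convert this to the $\C$-dual of $H_k(X_n;\C)$: combining the identification $H_k(X_n;\C) \cong H_k(X_n;\mathbb{Z})\otimes_{\mathbb{Z}}\C$ with the adjunction $\textup{Hom}_{\mathbb{Z}}(A,\C)\cong\textup{Hom}_{\C}(A\otimes_{\mathbb{Z}}\C,\C)$ yields $H^k(X_n;\C)\cong H_k(X_n;\C)^{*}$. Finally, since $H_k(X_n;\C)$ is finite-dimensional by the first step, it is (non-canonically) isomorphic to its $\C$-linear dual, giving $H^k(X_n;\C)\cong H_k(X_n;\C)$.

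I do not expect a serious obstacle here; the only substantive input is that $X_n$ has finitely generated homology, which is immediate from compactness and the explicit CW structure coming from Khovanov's decomposition. The remainder is standard homological algebra over a field.
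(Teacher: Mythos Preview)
Your argument is correct and is essentially the same as the paper's: both use that cohomology with $\C$-coefficients is the $\C$-dual of homology together with finite-dimensionality to conclude $H^k(X_n)\cong H_k(X_n)$. The only cosmetic difference is that the paper invokes Corollary~\ref{numfill} to see that $H^k(X_n)$ is finite-dimensional and then writes the duality ``by definition,'' whereas you justify finite-dimensionality via the finite CW structure on $X_n$ and spell out the duality through the Universal Coefficient Theorem; your version is slightly more self-contained but otherwise identical in content.
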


\begin{proof}
From Corollary \ref{numfill} we know $H^k(X_n) \cong \mathbb{C}^{b_k}$ where $b_k$ is the number of standard fillings of $(n-k, k)$. Say that $H_k(X_n) \cong \mathbb{C}^{a_k}$ for some nonnegative integer $a_k$. By definition  $H^k(X_n) = Hom_{\mathbb{C}}(H_k(X_n), \mathbb{C})$.  Since  $Hom_{\mathbb{C}}(\mathbb{C}^{a_k}, \mathbb{C}) \cong \left(Hom_{\mathbb{C}}(\mathbb{C}, \mathbb{C}) \right) ^{a_k} \cong \mathbb{C}^{a_k}$ we conclude $a_k = b_k$ for all $k$.
\end{proof}

\subsection{The Khovanov construction}\label{KhovCon}

Let $n\in \mathbb{N}$ be even. In \cite{K2} Khovanov proves for $\lambda = (n/2, n/2)$ the cohomology ring $H^*(X_{\lambda})$ is
isomorphic to the center of a ring 
related to his tangle invariant by proving both are isomorphic to the cohomology ring of a certain subspace $\widetilde{S}$ of $(S^2)^{n}$. Results from recent work of Cautis-Kamnitzer \cite{Cautis-Kamnitzer} imply that 
$X_{\lambda}$ and $\widetilde{S}$ are homeomorphic; the Appendix has a complete proof.  This section reviews the construction of $\widetilde{S}$, which we call the Khovanov construction of the $(n/2, n/2)$ Springer variety, and gives a diagrammatic set of generators for $H_*(\widetilde{S})$.  

\begin{definition}
A noncrossing matching on $n$ vertices is a collection of $n/2$ disjoint arcs which connect $n$ vertices (on a straight line) pairwise. 
\end{definition}
Let $B^{n/2}$ be the set of all noncrossing matchings on $n$ vertices. 
Given $a\in B^{n/2}$, let $S_a=\{ (x_1, \ldots, x_{n})\in (S^2)^{n} : x_i=x_j \text{ if } (i,j)\in a\}.$
Khovanov's construction of the $(n/2, n/2)$ Springer variety, denoted $\widetilde{S}$, is the union of $S_a$ over all matchings $$\widetilde{S} = \bigcup_{a\in B^{n/2}} S_a$$

Generators for the homology $H_*(\widetilde{S})$ can be described completely combinatorially.

\begin{definition}
A dotted noncrossing matching is a noncrossing matching in which some arcs have a single dot each.  We define the dotted arcs of the matching to be those arcs with dots on them.
\end{definition}

We will use $a,b$ to denote noncrossing matchings and $M, M'$ to denote dotted noncrossing matchings. There are many possible dotted noncrossing matchings associated to each noncrossing matching. Figure \ref{homgen} gives an example.  
\begin{figure}[h]
\includegraphics[width=1.2in]{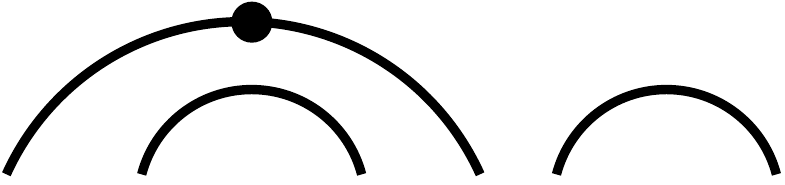}
\caption{A dotted noncrossing matching (for the generator $[p\times c \times c]$)}\label{homgen}
\end{figure}
Dotted noncrossing matchings are a simple, diagrammatic way to represent  the homology generators corresponding to cells in the cartesian product CW-decomposition for $S_a$.  This was first described in \cite[Section 5.2]{R}.

\begin{proposition} {\bf (Russell)}
The different dotted noncrossing matchings for the noncrossing matching $a$ are naturally bijective with the homology generators for $H_*(S_a)$.
\end{proposition}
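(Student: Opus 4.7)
The plan is to unwind the variety $S_a$ into a product of spheres and then apply the Künneth theorem using the standard two-cell CW-decomposition of $S^2$.

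First I would observe that $S_a$ is homeomorphic to $(S^2)^{n/2}$. Since $a$ is a matching, the $n$ coordinates of $(x_1,\ldots,x_n)\in S_a$ partition into $n/2$ equivalence classes, one for each arc of $a$, and the defining equations say that coordinates in the same class must be equal. So the projection that picks one coordinate per arc of $a$ gives a homeomorphism from $S_a$ onto the product of $n/2$ copies of $S^2$, indexed by the arcs of $a$.

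Next I would equip $S^2$ with its standard CW-structure having a single $0$-cell $p$ and a single $2$-cell $c$, so that $H_*(S^2)$ has basis $\{[p],[c]\}$ with $[p]\in H_0$ and $[c]\in H_2$. Taking the product CW-structure on $S_a\cong (S^2)^{n/2}$, the Künneth formula (trivially applicable in this torsion-free setting, and over $\mathbb{C}$ in any case) gives a basis for $H_*(S_a)$ consisting of the product classes
\[
[e_{\alpha_1}\times e_{\alpha_2}\times\cdots\times e_{\alpha_{n/2}}],\qquad e_{\alpha_i}\in\{[p],[c]\},
\]
where the factors are indexed by the arcs $\alpha_1,\ldots,\alpha_{n/2}$ of $a$. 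There are exactly $2^{n/2}$ such generators, one for each function from the arcs of $a$ to $\{[p],[c]\}$.

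Finally I would define the bijection: to a dotted noncrossing matching $M$ with underlying matching $a$, assign the homology generator whose $i$-th factor is $[p]$ if arc $\alpha_i$ is dotted in $M$ and $[c]$ if it is undotted. The sample in Figure \ref{homgen} confirms the convention (dotted arc gives a point factor). This is manifestly a bijection between the $2^{n/2}$ dotted noncrossing matchings on $a$ and the $2^{n/2}$ product cell-classes, and it is natural in that it only depends on the arc-by-arc structure of $M$ and not on any additional choice. There is no serious obstacle here; the only subtlety to verify carefully is that the product CW-decomposition on $(S^2)^{n/2}$ pulls back along the homeomorphism $S_a\cong (S^2)^{n/2}$ to a decomposition whose cells are indexed by the arcs of $a$ in the claimed way, which is immediate from the construction of the homeomorphism.
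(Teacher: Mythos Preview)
Your proof is correct and follows essentially the same approach as the paper's own proof: identify $S_a$ with $(S^2)^{n/2}$ via the arcs of $a$, use the standard CW-decomposition of $S^2$ into a point $p$ and a $2$-cell $c$, take the product cell structure, and send a dotted arc to the point factor and an undotted arc to the $2$-cell factor. The paper phrases the homology basis as coming from the ``cartesian product cell decomposition'' rather than invoking K\"unneth by name, but the content is identical.
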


\begin{proof}The standard CW-decomposition of the two-dimensional sphere $S^2$ is given by a fixed point $p  \in S^2$ and the two-cell $c = S^2 - p$.  A cartesian product of spheres is endowed with the cartesian product cell decomposition and homology basis.  Since $S_a$ is diffeomorphic to $(S^2)^{n/2}$ we may fix such a cell decomposition.  Each arc in a matching $a$ represents one sphere in the cartesian product $S_a$. Each generator of $H_*(S_a)$ consists of the choice of a point or a two-cell for each of these spheres. Given a dotted noncrossing matching $M$ associated to $a$, the corresponding homology generator chooses a point for the homology generator of a sphere if and only if the corresponding arc in $M$ has a dot. 
\end{proof}

Because of this proposition we will often refer to the dotted noncrossing matchings and the corresponding homology generators interchangeably.  Figure \ref{homgen} shows an example of a generator for $H_*(S_a)$ where $a$ is the matching on 6 vertices. 

Varying over all possible markings of a single matching yields a generating set for $H_*(S_a)$. By considering all possible markings of all noncrossing matchings on $n$ vertices, we get a set of generators for $\bigoplus_a H_*(S_a)$.  

In \cite[Section 3.2]{R} Russell defines Type I and Type II relations on dotted noncrossing matchings.
\begin{definition} \label{relations}
Let $a,b\in B^{n/2}$  with 
all arcs in $a$ and $b$ identical except $(i,j)$, $(k,l) \in a$ and $(i,l)$, $(j,k)\in b$ for some $i<j<k<l$.  Fix a choice of dots on the arcs of $a$ in positions other than $i,j,k,l$.
\begin{enumerate}
\item {\bf Type I relations:}  Let $M_1$ be the dotted noncrossing matching with dotted arc $(i,j)$  and undotted arc $(k,l)$.  Let $M_2$ be the dotted noncrossing matching with dotted arc $(k,l)$ and undotted arc $(i,j)$.  Define $M'_1$ and $M'_2$ analogously for $b$, so that they agree with the $M_i$ off of $i,j,k,l$.  Type I relations have the form
\[M_1+M_2-M'_1-M'_2=0.\]
\begin{figure}[h]
\begin{equation*}
 \begin{picture}(85,10)(0,0)
 \put(10,0){\includegraphics[width=1in]{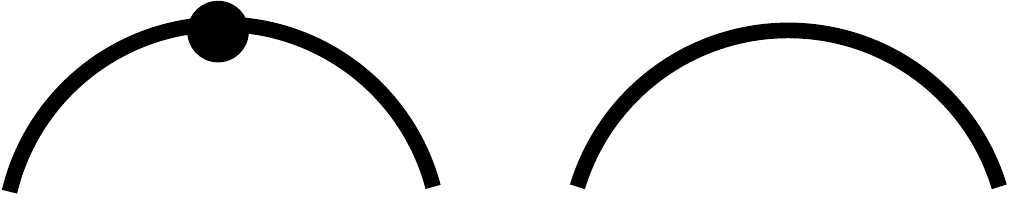}}
 \put(10,-6){\tiny{$i$}}
 \put(38,-6){\tiny{$j$}}
 \put(50,-6){\tiny{$k$}}
 \put(80,-6){\tiny{$l$}}
 \end{picture} + 
 \begin{picture} (80,10)(0,0)
 \put(5,0){\includegraphics[width=1in]{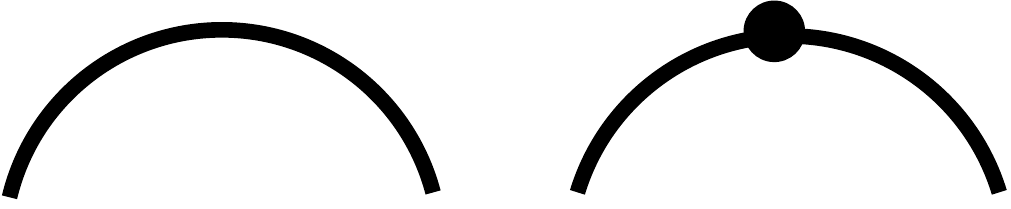}}
 \put(5,-6){\tiny{$i$}}
 \put(33,-6){\tiny{$j$}}
 \put(45,-6){\tiny{$k$}}
 \put(75,-6){\tiny{$l$}}
 \end{picture} \hspace{.1in} = \hspace{.1in} 
 \begin{picture}(65,10)(0,0)
 \put(0,0){\includegraphics[width=.85in]{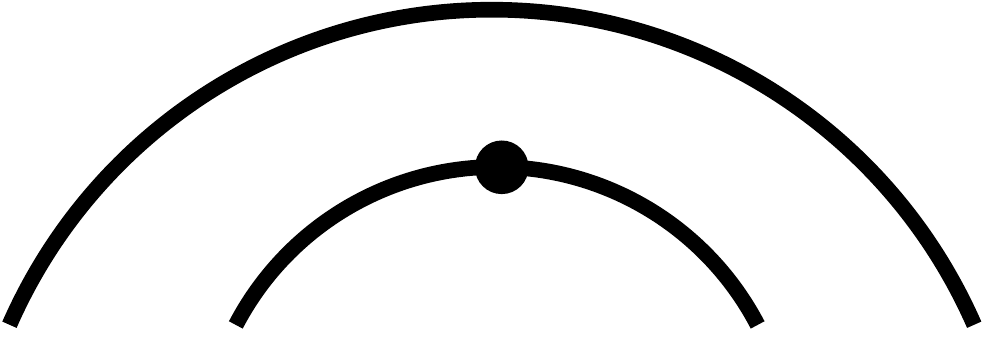}}
 \put(0,-7){\tiny{$i$}}
 \put(13,-7){\tiny{$j$}}
 \put(45,-7){\tiny{$k$}}
 \put(59,-7){\tiny{$l$}}
 \end{picture} + 
 \begin{picture}(80,10)
 \put(0,0){\includegraphics[width=.85in]{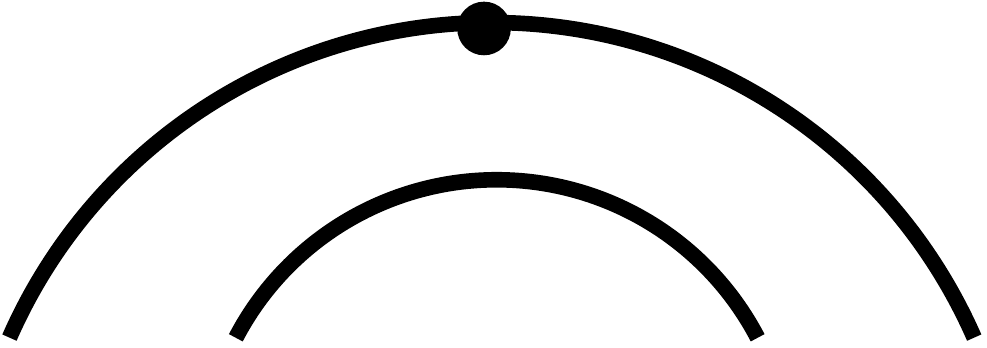}}
 \put(0,-7){\tiny{$i$}}
 \put(13,-7){\tiny{$j$}}
 \put(45,-7){\tiny{$k$}}
 \put(59,-7){\tiny{$l$}}
 \end{picture}
\end{equation*}
\caption{Type I relation}\label{tp1}
\end{figure}
\item {\bf Type II relations:} Let $M_3$ be the dotted noncrossing matching with dotted arcs $(i,j)$ and $(k,l)$. Define $M'_3$ analogously for $b$.  Type II relations have the form
\[M_3-M'_3=0.\]
\begin{figure}[h]
\begin{equation*}
\begin{picture}(80,10)(0,0)
\put(0,0){\includegraphics[width=1in]{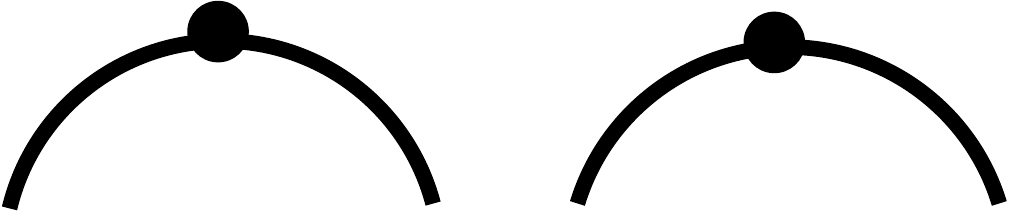}} 
\put(0,-6){\tiny{$i$}}
 \put(28,-6){\tiny{$j$}}
 \put(40,-6){\tiny{$k$}}
 \put(70,-6){\tiny{$l$}}
\end{picture} \hspace{.1in} = 
\begin{picture}(80,10)(0,0)
\put(0,0){\includegraphics[width=.85in]{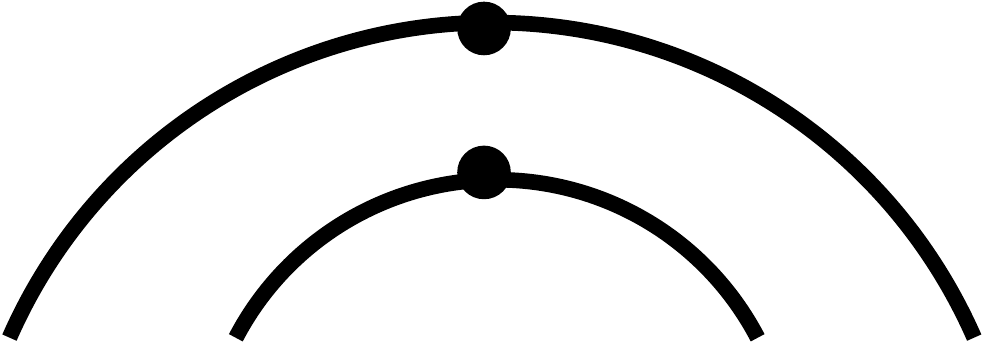}}
\put(0,-7){\tiny{$i$}}
 \put(13,-7){\tiny{$j$}}
 \put(45,-7){\tiny{$k$}}
 \put(59,-7){\tiny{$l$}}
\end{picture}
\end{equation*}
\caption{Type II relation}\label{tp2}
\end{figure}
\end{enumerate}
\end{definition}

Appealing to Type I and Type II relations \cite[Theorem 3.1]{R} proved the following.
\begin{proposition}{\bf (Russell)}
Let $R$ be the subspace of $\bigoplus_a H_*(S_a)$ generated by all Type I and Type II relations. Then 
$$H_*(\widetilde{S}) \cong \left( \bigoplus_a H_*(S_a) \right) /R.$$
\end{proposition}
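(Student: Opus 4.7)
The plan is to use a Mayer--Vietoris analysis of the cover $\widetilde{S} = \bigcup_{a} S_a$. The inclusions $S_a \hookrightarrow \widetilde{S}$ induce a map
\[\phi: \bigoplus_a H_*(S_a) \longrightarrow H_*(\widetilde{S}),\]
and surjectivity of $\phi$ follows because the $S_a$ cover $\widetilde{S}$ (each component is compact and smooth). The real content is to identify the kernel as exactly $R$.

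First I would verify that the Type I and Type II relations lie in $\ker \phi$ by analyzing the pairwise intersections $S_a \cap S_b$ for matchings $a,b$ that differ by a single unnesting of $(i,j), (k,l)$ into $(i,l), (j,k)$. Membership in both forces $x_i = x_j = x_k = x_l$, so $S_a \cap S_b \cong (S^2)^{n/2-1}$, and inside each of the two distinguished $S^2 \times S^2$ factors (one from $S_a$, one from $S_b$) the intersection appears as the diagonal. The diagonal class in $H_2(S^2 \times S^2)$ is $[p \times c] + [c \times p]$, and the class of a point is $[p \times p]$. Pushing a chosen homology class from $S_a \cap S_b$ forward into $S_a$ (resp.\ $S_b$) and smashing with any fixed choice of dots on the other $n/2 - 2$ arcs therefore yields $M_1 + M_2$ (resp.\ $M_1' + M_2'$) in the diagonal-on-diagonal case, and $M_3$ (resp.\ $M_3'$) in the point-on-diagonal case. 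The Mayer--Vietoris sequence then gives $M_1 + M_2 - M_1' - M_2' \in \ker\phi$ and $M_3 - M_3' \in \ker\phi$ respectively. So $R \subseteq \ker\phi$, and $\phi$ factors through a surjection $\overline{\phi}: \bigl( \bigoplus_a H_*(S_a) \bigr) / R \twoheadrightarrow H_*(\widetilde{S})$.

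Next I would show $\overline{\phi}$ is an isomorphism by a dimension count, degree by degree. By Proposition \ref{GP identification of Springer rep} and Corollary \ref{numfill}, combined with the homeomorphism $X_n \cong \widetilde{S}$, the rank of $H_k(\widetilde{S})$ equals the number of standard Young tableaux of shape $(n-k,k)$. On the other side, I would exhibit a set of dotted noncrossing matchings in bijection with such tableaux that spans $\bigoplus_a H_*(S_a) / R$. The natural candidate normal form is: the canonical noncrossing matching in which all dotted arcs are innermost (or any analogous "reduced" configuration), with the dotted arcs recording the second row of the tableau. Type I relations let one unnest an undotted arc past a dotted one, and Type II relations let one move a pair of adjacent dotted arcs between nested and unnested positions; iterated application reduces an arbitrary dotted noncrossing matching to this normal form, giving a spanning set of the correct size.

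The hard part will be this last step: showing rigorously that the rewriting by Type I/II relations terminates at a unique normal form and that the normal forms biject with standard tableaux of shape $(n-k,k)$, where $k$ is the number of dotted arcs. This is a Gröbner-basis-style argument that requires a well-chosen partial order on dotted matchings (for instance by total nesting depth, refined by lexicographic position of the dots) and a careful check that the relations are confluent with respect to that order. Once the normal-form count matches the known rank of $H_k(\widetilde{S})$, the surjection $\overline{\phi}$ is forced to be an isomorphism and the proposition follows.
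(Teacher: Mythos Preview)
The paper does not prove this proposition; it is quoted from \cite[Theorem 3.1]{R}. So there is no ``paper's own proof'' to compare against here, and your sketch should be judged on its own merits and against what the paper does \emph{elsewhere}.

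Your overall strategy---verify $R\subseteq\ker\phi$ via Mayer--Vietoris on pairwise intersections, then count dimensions using the Springer-theoretic rank formula (Corollary~\ref{numfill}) together with a normal-form spanning set---is sound and in fact anticipates exactly what the paper carries out in Section~\ref{bijection}. The Corollary following Theorem~\ref{bijection between tableaux and matchings} reduces arbitrary dotted matchings to a normal form using Type I/II relations and then matches the count against Corollary~\ref{numfill}; you are folding that argument into the proof of the present proposition, which is perfectly legitimate and not circular (the rank formula comes from Springer theory and the homeomorphism $X_n\cong\widetilde{S}$ from the Appendix, neither of which uses this proposition).

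That said, two details in your sketch are backwards. First, the normal form the paper uses (the \emph{standard} dotted noncrossing matchings) requires that no dotted arc be nested \emph{below} another arc, i.e.\ dotted arcs sit outermost, not innermost as you wrote. Second, an undotted arc contributes a $2$-cell and a dotted arc a point, so a generator with $k$ \emph{undotted} arcs lies in $H_{2k}$; the bijection with standard tableaux of shape $(n-k,k)$ has $k$ equal to the number of undotted arcs, not dotted ones. Finally, your one-line justification of surjectivity (``the $S_a$ cover $\widetilde{S}$'') is too quick: for a general closed cover the induced map $\bigoplus_a H_*(S_a)\to H_*(\bigcup_a S_a)$ need not be onto. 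Here it is, but you need to invoke either a compatible CW structure on $\widetilde{S}$ whose cells each lie in some $S_a$, or an inductive Mayer--Vietoris argument over the poset of matchings; this is where the real work in \cite{R} lies.
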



Each undotted arc corresponds to a two-cell and each dotted arc corresponds to a point in the cartesian product CW-decomposition of $S_a$, which describes the underlying topology of the relations.  Figures \ref{tp1} and \ref{tp2} show both types. Note that there is a fixed arrangement of dotted and undotted arcs compatible with the vertex labelings that is not shown.  

\subsection{Bijection between the homology basis for $X_n$ and standard tableaux} \label{bijection}

The generators for homology described in Section \ref{KhovCon} are not linearly independent.  In this section we choose a homology basis for $X_n$ indexed by {\em standard} noncrossing matchings.  We then describe a bijection between the standard noncrossing matchings and the standard tableaux of shape $(n-k,k)$ for $k=0,1,\ldots, n/2$.  

\begin{definition}
A standard noncrossing matching is a (dotted) noncrossing matching in which no dotted arc is nested below another arc.
\end{definition}

As a consequence, the only dotted arcs in a standard noncrossing matching are either entirely unnested or the topmost arc in a nesting.  Figures \ref{std} and \ref{nonstd} show several examples.

\begin{figure}[h]
 \raisebox{-12pt}{ \includegraphics[width=.8in]{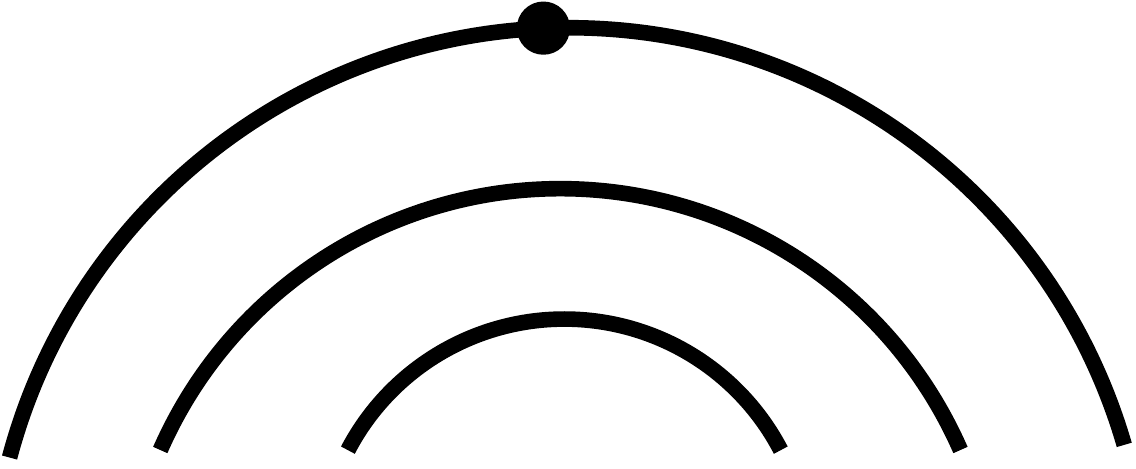}} \hspace{.5in} \raisebox{-8pt}{\includegraphics[width=1in]{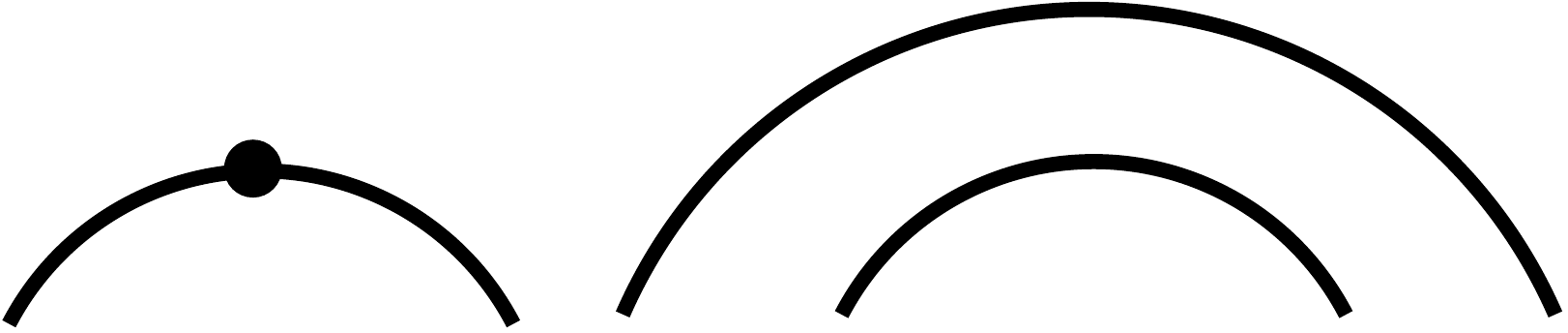}}
\caption{Standard dotted noncrossing matchings}\label{std}
\end{figure}

\begin{figure}[h]
\raisebox{-10pt}{\includegraphics[width=1in]{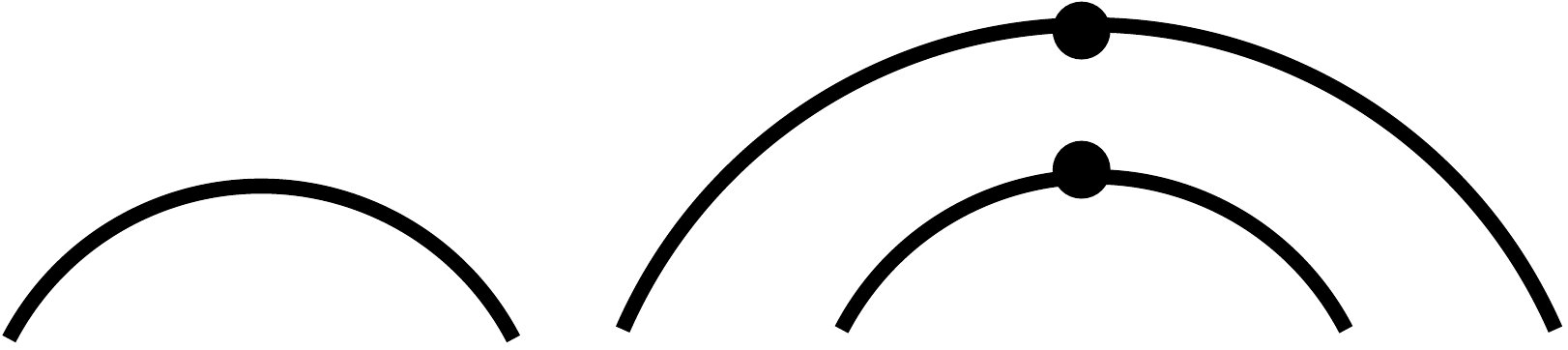}} \hspace{.5in} \raisebox{-12pt}{\includegraphics[width=.8in]{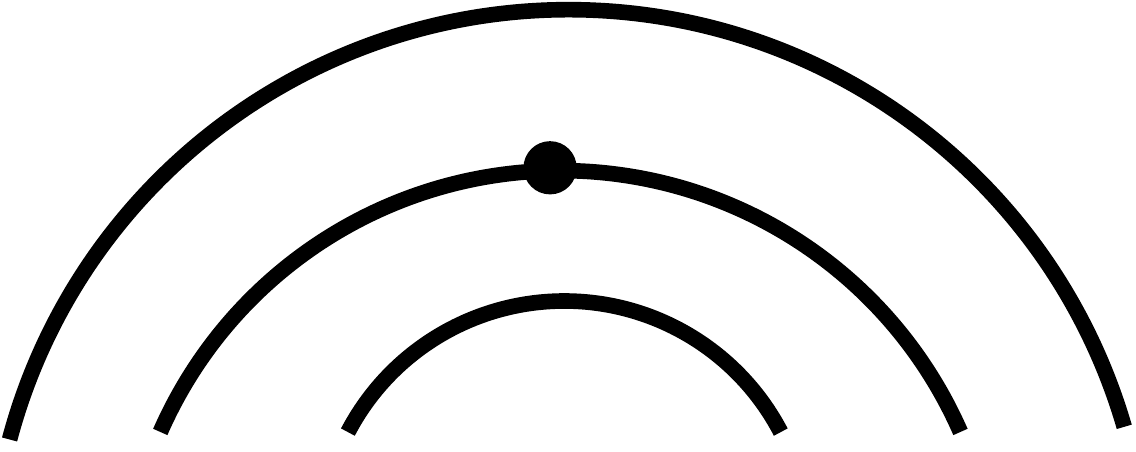}}
\caption{Nonstandard dotted noncrossing matchings}\label{nonstd}
\end{figure}

We now define a function $\varphi$ from dotted noncrossing matchings to standard tableaux.  Given a dotted noncrossing matching $M$, construct a tableau $T$ as follows:
\begin{itemize}
\item For each undotted arc $(i,j)$ with $i < j$, place the number $j$ in the bottom row of $T$.
\item Place all other numbers in the top row of $T$.
\item Order each row so that the numbers increase left-to-right.
\end{itemize}
The tableau $T$ obtained by this process is $\varphi(M)$.  It is a two-row standard tableau by construction.  Note that if the dotted matching $M$ had $k$ undotted arcs and $n$ vertices, then the resulting tableau has shape $(n-k,k)$, as shown in Figure \ref{tableau to dncm}.

\begin{figure}[h]
\raisebox{-4pt}{\includegraphics[width=1in]{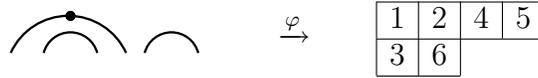}} \hspace{.3in} $\xrightarrow{\varphi} \hspace{.3in}
\begin{tabular}{|c|c|c|c|}
\cline{1-4} 1 & 2 & 4 & 5 \\
\cline{1-4} 3 & 6 & \multicolumn{2}{c}{} \\
\cline{1-2} \multicolumn{4}{c}{} \vspace{-1.3em} \end{tabular}$
\caption{A homology generator and its associated tableau}\label{tableau to dncm}

\end{figure}

Next we define a map $\theta$ from standard two-row tableaux to standard dotted noncrossing matchings.  Given a standard tableau $T$ of shape $(n-k,k)$, construct a standard dotted noncrossing matching $M$ as follows:
\begin{enumerate}
\item Repeat the next two steps until all numbers on the second row have been matched:
\begin{enumerate}
\item Let $j$ be the leftmost unmatched number in the second row.
\item Add an undotted arc from $j$ to the rightmost unmatched $i$ to the left of $j$ in $M$.  (In other words $i = \max \{ k: k < j, k \textup{ unmatched}\}$.)
\end{enumerate}
\item If $i$ is the leftmost unmatched number in $M$, add a dotted arc from $i$ to its nearest unmatched neighbor on the right.  (In other words, add a dotted arc $(i,j)$ where $j = \min \{ k: k > i, k \textup{ unmatched}\}$.)
\end{enumerate}
The matching $M$ is by definition $\theta(T)$.  Figure \ref{dncm to tableau} gives an example.

\begin{figure}[h]
$\begin{tabular}{|c|c|c|c|c|}
\cline{1-5} 1 & 2 &  3 & 4 & 6 \\
\cline{1-5}  5 & \multicolumn{4}{c}{} \\
\cline{1-1} \multicolumn{5}{c}{} \vspace{-1.3em} \end{tabular} \hspace{.3in}
\xrightarrow{\theta} \hspace{.3in} \raisebox{-4pt}{\includegraphics[width=1in]{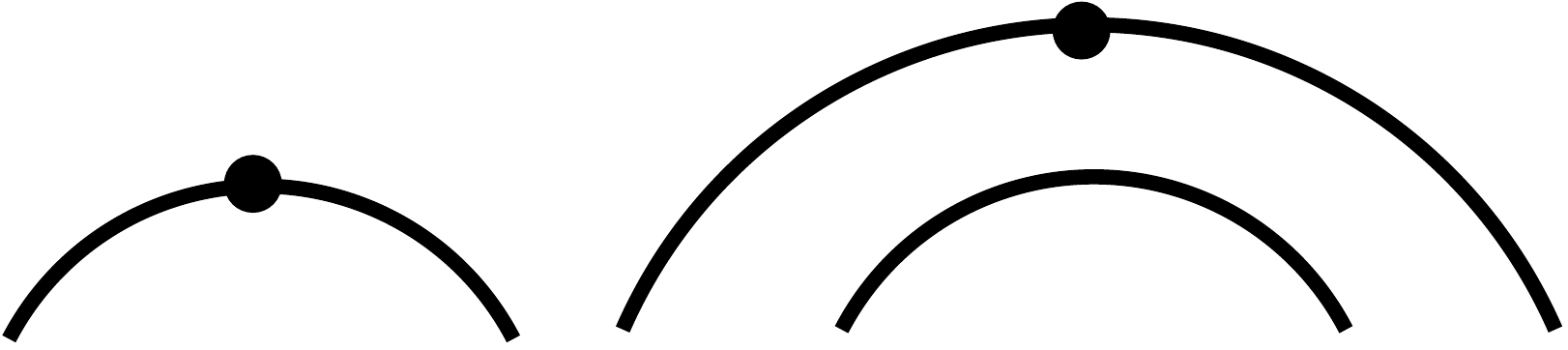}}$

\caption{A tableau of shape $(5, 1)$ and its associated matching}\label{dncm to tableau} 

\end{figure}

\begin{lemma}
For each standard two-row tableau $T$, the matching $M=\theta(T)$ is a standard dotted noncrossing matching.  If $T$ has shape $(n-k,k)$ then $M$ has exactly $k$ undotted arcs.
\end{lemma}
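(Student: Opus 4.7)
The plan is to verify four statements in turn: (i) step 1 of $\theta$ can always be executed, (ii) the collection of arcs in $M = \theta(T)$ forms a noncrossing matching, (iii) no dotted arc of $M$ is nested below another arc, and (iv) $M$ has exactly $k$ undotted arcs.  Statement (iv) is immediate, since step 1 contributes exactly one undotted arc per entry of the second row of $T$.

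For (i), I would reinterpret $T$ as a parenthesis word: mark position $p \in \{1,\ldots,n\}$ with ``$($'' if $p$ lies in the first row of $T$ and with ``$)$'' if it lies in the second row.  Column-strictness of the standard tableau forces every prefix of this word to have at least as many ``$($''s as ``$)$''s (since each second-row entry dominates the first-row entry above it in the same column).  When step 1 processes the $s$-th smallest second-row entry $j$, the $s-1$ previously drawn arcs consumed $s-1$ first-row entries less than $j$, whereas the prefix-balanced condition supplies at least $s$ first-row entries less than $j$; so at least one remains unmatched and may serve as $i$.

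For (ii) and (iii), the key invariant is the following: whenever step 1 draws an undotted arc $(i,j)$, every position strictly between $i$ and $j$ has already been matched, because $i$ is chosen as the rightmost unmatched position less than $j$.  Since matched positions stay matched forever, this invariant propagates through the whole algorithm.  It immediately implies that any two undotted arcs $(i,j)$ and $(i',j')$ are either disjoint or nested: if $j' \in (i,j)$ but $i' \notin (i,j)$, then $i$ would lie strictly inside $(i',j')$ while being unmatched, violating the invariant for $(i',j')$.  Likewise every dotted arc $(a,b)$ from step 2 has $a$ equal to the currently leftmost unmatched vertex, so $a$ cannot lie strictly inside any arc drawn earlier, undotted or dotted; this rules out crossings with earlier arcs and, at the same time, forbids $(a,b)$ from being nested beneath any such arc.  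Finally, the dotted arcs drawn in step 2 pair consecutive unmatched vertices in order, so they neither cross nor nest among themselves.

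The main obstacle is formulating the ``interior is already matched'' invariant at the right level of generality so that a single statement handles crossings among step-1 arcs, crossings between step-1 and step-2 arcs, and nestings of dotted arcs beneath any prior arc.  Once this invariant and the parenthesis-word interpretation of standardness are in place, the remaining arguments reduce to routine bookkeeping on the unmatched positions.
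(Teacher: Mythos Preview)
Your proposal is correct and follows essentially the same approach as the paper: both hinge on the observation that when an arc $(i,j)$ is drawn, every vertex strictly between $i$ and $j$ is already matched (the paper phrases this as ``there are no nested unmatched vertices after Step (1)''), and both deduce well-definedness of Step 1 from the column-strictness of $T$.  One small slip: knowing only that the left endpoint $a$ of a dotted arc $(a,b)$ lies outside every earlier arc does not by itself rule out the crossing $a<i<b<j$; you need the same invariant applied to the (also unmatched) right endpoint $b$, which is equally immediate.
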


\begin{proof}
Denote the column of $j$ by $c(j)$.  Each of the numbers in the first $c(j)$ positions of the first row of $T$ are above or to the left of $j$.  The definition of standard tableau implies that all of these numbers are less than $j$.  Hence Step (b) can always be performed.

We confirm the matching produced by Step (1) is noncrossing.  Suppose arc $(i,j)$ was added after arc $(i',j')$.  Step (a) ensures that $j'$ is to the left of $j$ in the standard tableau so $j' < j$.  If the arcs cross then either $i' < i < j'$ or $j' < i'$.  Both contradict Step (b).

There are no nested unmatched vertices after Step (1), namely unmatched vertices $k$ with an arc $(i,j)$ so that $i < k < j$.  (Such $k$ contradict the choice of $i$ in Step (b).)  Hence Step (2) creates neither crossing arcs nor dotted arcs that are nested by undotted arcs.  (The latter would be a dotted $(i,j)$ such that there is an undotted $(i',j')$ with $i'<i<j<j'$.)  

This shows $\theta(T)$ is a standard dotted noncrossing matching.  Exactly one undotted arc was constructed for each box in the second row of $T$, which proves the claim.
\end{proof}

\begin{theorem}\label{bijection between tableaux and matchings}
Let $n,k$ be a pair with $n$ even and $k \leq n/2$.  The map $\varphi$ is a bijection between standard noncrossing matchings on $n$ vertices with exactly $k$ undotted arcs and two-row standard tableaux of shape $(n-k,k)$.  Its inverse is $\theta$.
\end{theorem}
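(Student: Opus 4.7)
My plan is to verify that $\varphi$ lands in standard tableaux of the correct shape, and then to check that $\theta\circ\varphi$ and $\varphi\circ\theta$ are both the identity. The previous lemma already handles the corresponding statements for $\theta$.

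First I would check that if $M$ is a standard dotted noncrossing matching with exactly $k$ undotted arcs, then $\varphi(M)$ is standard of shape $(n-k,k)$. The shape and row-increase are immediate from the construction. For column-strictness, list the right endpoints of the undotted arcs as $j_1<j_2<\cdots<j_k$. Because the matching is noncrossing, the left endpoints of the first $s$ undotted arcs all lie strictly below $j_s$, so $\{1,\ldots,j_s\}$ contains $2s$ endpoints of these arcs; hence $j_s\geq 2s$. The bottom row of $\varphi(M)$ contains only $s-1$ entries below $j_s$, so at least $s$ top-row entries are strictly less than $j_s$, forcing $\varphi(M)(1,s)<\varphi(M)(2,s)$.

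The identity $\varphi\circ\theta=\mathrm{id}$ is essentially tautological: step (b) of $\theta$ creates one undotted arc whose right endpoint is $j$ for each $j$ in the second row of $T$, so the right endpoints of the undotted arcs of $\theta(T)$ are precisely the second-row entries of $T$, and $\varphi$ then recovers $T$ exactly. For $\theta\circ\varphi=\mathrm{id}$, fix a standard matching $M$ with undotted arcs $(i_s,j_s)$ indexed so that $j_1<\cdots<j_k$, and induct on $s$ to show that the $s$-th undotted arc built by $\theta(\varphi(M))$ is $(i_s,j_s)$. Assuming the construction agrees with $M$ through stage $s-1$, the vertex $i_s$ is unmatched at stage $s$ because it does not appear in any of the earlier $(i_t,j_t)$, so it suffices to rule out any other unmatched vertex $v$ with $i_s<v<j_s$. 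Such a $v$ would be an endpoint of some arc $A$ of $M$ with partner $w\neq v$: if $w\notin(i_s,j_s)$ then $A$ crosses $(i_s,j_s)$, contradicting noncrossingness; if $w\in(i_s,j_s)$ and $A$ is undotted then $w$ equals some $j_t$ with $t<s$ (as $w$ is a right endpoint smaller than $j_s$), contradicting the inductive hypothesis that $w$ is already matched; and if $w\in(i_s,j_s)$ and $A$ is dotted then $A$ is nested below $(i_s,j_s)$, violating standardness.

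Once step (1) is complete, the remaining unmatched vertices $v_1<v_2<\cdots<v_{2r}$ are exactly the endpoints of the dotted arcs of $M$, where $r=n/2-k$. Since $M$ is standard, no dotted arc is nested below another arc (in particular below another dotted arc), so the dotted arcs restrict to a noncrossing matching of $\{v_1,\ldots,v_{2r}\}$ with no nestings; a short induction shows that the unique such matching pairs $v_{2i-1}$ with $v_{2i}$, which is exactly what step (2) of $\theta$ produces. Hence $\theta(\varphi(M))=M$. The main obstacle is the case analysis ruling out a stray unmatched vertex in $(i_s,j_s)$, since that is where the noncrossing and standard hypotheses must be combined and where the difference between dotted and undotted partners matters.
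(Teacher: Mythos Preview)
Your proof is correct. For $\varphi\circ\theta=\mathrm{id}$ you and the paper give essentially the same one-line argument. For $\theta\circ\varphi=\mathrm{id}$ the approaches genuinely differ: the paper inducts on the number $k$ of undotted arcs, deleting the leftmost undotted arc $(j-1,j)$ from $M$ (and the entries $j-1,j$ from $T$), applying the inductive hypothesis to the smaller pair $(M^j,T^j)$, and then checking that reinserting the arc commutes with $\theta$. You instead fix $M$ and induct on the stage $s$ of the algorithm, using the hypotheses directly to exclude any stray unmatched vertex in the interval $(i_s,j_s)$. Your argument is more hands-on but has the virtue of making transparent exactly where each hypothesis enters: noncrossing handles the case $w\notin(i_s,j_s)$, the inner induction handles the undotted-nested case, and standardness handles the dotted-nested case. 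You also supply an argument that $\varphi(M)$ is column-strict, which the paper dismisses as ``by construction.'' One small wording issue: in the undotted-nested case the right endpoint of $A$ might be $v$ rather than $w$, but the conclusion is the same since then $A=(i_t,j_t)$ for some $t<s$ and both of its endpoints, in particular $v$, are already matched by the inductive hypothesis.
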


\begin{proof}
The composition $\varphi \circ \theta$ is the identity since the right endpoints of the undotted arcs in $\theta(T)$ are exactly the second row of $T$.

If $M$ has no undotted arcs then $\varphi(M)$ is the (unique) tableau of shape $(n)$.  The matching $\theta(\varphi(M))$ is the entirely unnested, entirely dotted matching, which has a dotted arc $(2i-1,2i)$ for each $i \leq n/2$.  This is the only fully dotted standard noncrossing matching, since standard in this case implies that the matching has no nesting.  So $\theta(\varphi(M)) = M$ when $k=0$ (independent of $n$).

Assume that $\theta \circ \varphi$ is the identity on all standard noncrossing matchings with at most $k-1$ undotted arcs (for all $n$).  Let $M$ be a matching with $k$ undotted arcs, let $T = \varphi(M)$, and let $j$ be the leftmost vertex with an undotted arc $(i,j)$ in $M$.  Then $i = j-1$, since otherwise the arc $(i,j)$ nests another arc $(i',j')$ with $i < i' < j' < j$.  This either violates the minimality assumption on $j$ or the hypothesis that $M$ is standard. 

Let $M^j$ denote the matching obtained by removing the vertices $j-1$ and $j$, and renaming the remaining vertices $1, \ldots, n-2$.  Let $T^j$ be the standard tableau obtained by removing the entries $j-1$ and $j$ and renaming the rest of the entries $1, \ldots, n-2$.  Note that $\varphi(M^j) = T^j$ and that $M^j$ has $k-1$ undotted arcs.  The inductive hypothesis says that $\theta(\varphi(M^j)) = M^j$.  

We compare $\theta(T)$ to $\theta(T^j)$.  The first iteration of Step (1) on $T$ creates an undotted arc $(j-1,j)$ since $j$ is the first entry in the second row of $T$ and hence $j-1$ is on the first row.  Successive iterations create an undotted arc $(i,k)$ in $T$ if and only if either $i>j$ and $(i-2,k-2)$ is an arc in $T^j$ or $i<j-1$ and $(i,k-2)$ is an arc in $T^j$.  Hence $\theta(T)$ is the standard matching obtained by relabeling each vertex $k$ such that $k \geq j-1$ with the number $k+2$, and then inserting the undotted arc $(j-1,j)$ into $T^j$.  In other words $\theta(T)=T$.  This implies by induction that $\theta \circ \varphi$ is the identity on all standard noncrossing matchings.
\end{proof}

\begin{corollary}
The standard noncrossing matchings on $n$ vertices with $k$ undotted arcs form a basis for $H_{2k}(X_n)$.
\end{corollary}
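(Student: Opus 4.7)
The plan is to combine a dimension count with a spanning argument. By Theorem \ref{bijection between tableaux and matchings}, the number of standard noncrossing matchings on $n$ vertices with exactly $k$ undotted arcs equals the number of standard tableaux of shape $(n-k,k)$, which by Corollary \ref{numfill} together with the isomorphism $H^k(X_n)\cong H_k(X_n)$ coincides with $\dim H_{2k}(X_n)$. The grading shift is natural: in the cartesian-product CW decomposition of $S_a$, each undotted arc contributes a $2$-cell and each dotted arc a $0$-cell, so a dotted matching with $k$ undotted arcs represents a class in degree $2k$. Hence it suffices to show that the standard matchings span $H_{2k}(X_n)$.

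I would prove spanning by induction on a complexity measure $c(M)$, defined as the sum over all dotted arcs of $M$ of the number of arcs in $M$ that strictly contain them. Note that $c(M)=0$ precisely when $M$ is standard. Given a nonstandard dotted matching $M$, choose a dotted arc $(j,k)$ together with its innermost enclosing arc $(i,l)$ in $M$, so that no arc $(i',l')$ of $M$ satisfies $i<i'<j$ and $k<l'<l$. Depending on whether $(i,l)$ is undotted or dotted, apply the Type I or Type II relation of Definition \ref{relations} to swap the nested pair $(i,l),(j,k)$ for the unnested pair $(i,j),(k,l)$, thereby writing $M$ modulo $R$ as a signed sum of matchings in which this particular nesting has been eliminated.

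The main thing to verify is that (a) the swap is legal in the sense that all remaining arcs still form a noncrossing matching, and (b) each matching appearing on the right-hand side has strictly smaller complexity than $M$. For (a), the innermost choice of $(i,l)$ together with the noncrossing hypothesis forces every other arc of $M$ to lie entirely outside $[i,l]$ or entirely within one of the three subintervals $[i+1,j-1]$, $[j+1,k-1]$, $[k+1,l-1]$; in each case the swap leaves the configuration noncrossing. For (b), a local depth accounting on the arcs $(i,l),(j,k)$ shows that the newly dotted arcs in each summand sit at depth exactly one less than $(j,k)$ had in $M$, while the depth of every other dotted arc either stays the same or decreases (arcs nested in $[j+1,k-1]$ lose both $(i,l)$ and $(j,k)$ from above, dropping by two). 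Iterating drives $c$ to zero and expresses $M$ as a $\mathbb{Z}$-linear combination of standard matchings modulo $R$.

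The main obstacle is the bookkeeping in step (b): one must check simultaneously that no dotted arc's depth increases after the swap, across all dotted arcs of $M$ at once. Because the Type I and Type II relations alter the underlying matching only on the four vertices $\{i,j,k,l\}$, this reduces to the finite case analysis sketched above. Once spanning is established, a spanning set whose cardinality equals $\dim H_{2k}(X_n)$ is automatically a basis, which gives the corollary.
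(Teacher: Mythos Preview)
Your proposal is correct and follows essentially the same approach as the paper: both arguments show that standard dotted matchings span by using the Type~I and Type~II relations to rewrite any nonstandard matching, and then invoke the dimension count coming from Theorem~\ref{bijection between tableaux and matchings} and Corollary~\ref{numfill}. Your version is in fact more careful than the paper's: the paper sketches a two-phase reduction (first apply Type~I until dotted arcs sit only under dotted arcs, then apply Type~II to unnest), without an explicit termination argument, whereas your complexity measure $c(M)$ and the local depth accounting make termination transparent and handle both relation types uniformly.
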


\begin{proof}
The noncrossing matchings on $n$ vertices with $k$ undotted arcs generate $H_{2k}(X_n)$, so we must show that each nonstandard noncrossing matching is equivalent to a linear combination of standard noncrossing matchings with the same number of undotted arcs.  Type I relations (see Definition \ref{relations}) permit us to replace each instance of a dotted arc nested below an undotted arc by a linear combination of matchings with the dotted arcs at the top of nestings.  This means we may assume the matching is nonstandard only because dotted arcs are nested.  Type II relations permit us to replace each instance of two nested dotted arcs $(i,j)$ and $(i',j')$ satisfying $i'<i<j<j'$ with the unnested dotted arcs $(i,i')$ and $(j,j')$.  Together, these relations allow us to reduce to standard form.
\end{proof}

\section{The topological $S_n$-action on $H_*(X_n)$}

The space $(S^2)^n$ has a natural action of the permutation group $S_n$ given by permuting the components.  We will use an antipodal map to embed the Springer fiber $X_n$ into $(S^2)^n$. This gives a natural action of $S_n$ on the image of $H_*(X_n)$ under the induced map on homology.  In this section we describe this map and the $S_n$ action on $H_*(X_n)$, giving an explicit formula for the action of each simple transposition $s_i$ on the homology basis from Section \ref{Springer topology}.

\subsection{The antipodal embedding}\label{antipodal embeddings}

We will define two embeddings. The first is described as a map on the space $X_n$ but ignores the structure coming from the individual components $S_a$. The second embedding, defined for each $S_a$, is more convenient for computational purposes but does not extend to $X_n$. By showing that both maps give the same representation, we can use the first as a geometric embedding of the Springer variety and the second to study the $S_n$-action on that embedding.

Many key calculations involve $H_*((S^2)^n)$. As in the prior sections, we use a homology basis associated to the cartesian product cell decomposition. Line diagrams, like those shown in Figure \ref{lndg}, allow us to extend the diagrammatic notation from Section \ref{KhovCon} to the cartesian product generators of $H_*((S^2)^n)$.
 \begin{definition}
A line diagram is a collection of $n$ parallel, vertical line segments some of which are marked with a single dot. 
 \end{definition}
Each cartesian product generator for $H_*((S^2)^n)$ is represented by a dotted strand for each sphere where a point has been chosen and an undotted strand for each sphere where a two-cell has been chosen. Figure \ref{lndg} has an example.
 
\begin{figure}[h] 
\begin{picture}(70, 30) (0,0) 
\put(-30,5){\line(0,1){24}}
\put(-30, 17){\circle*{5}}
\put(-22,5){\line(0,1){24}}
\put(-14,5){\line(0,1){24}}
\put(-14, 17){\circle*{5}}
\put(-6,5){\line(0,1){24}}
\put(-50, -5){$[p\times c \times p \times c]$}
\put(60,5){\line(0,1){24}}
\put(68,5){\line(0,1){24}}
\put(76,5){\line(0,1){24}}
\put(84,5){\line(0,1){24}}
\put(84, 17){\circle*{5}}
\put(40, -5){$[c\times c \times c \times p]$}
\end{picture}
\caption{Generators for $H_*( (S^2)^{4})$}\label{lndg}
\end{figure}
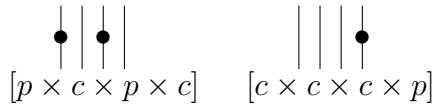

We begin building the antipodal embeddings by defining two maps $f$ and $g$.  There is only one noncrossing matching on two vertices, namely the arc $(1,2)$. According to Khovanov's construction the space $X_2 = \{ (x,x): x\in S^2\}$. Define maps $f,g: X_2 \rightarrow S^2\times S^2$ by
$$ f(x,x) = (x,-x) \hspace{0.5in} \textup{ and } \hspace{0.5in} g(x,x) = (-x, x)$$
where $-x$ denotes the image of $x$ under the antipodal map $S^2 \rightarrow S^2$.  (If $S^2$ is realized as the solutions in $\R^3$ to $x_1^2+x_2^2+x_3^2 = 1$ then the antipodal map does in fact send $x$ to $-x$ for each $x \in S^2$.)

Since $X_2 \cong S^2$ the homology $H_*(X_2)$ is generated by a point $p$ and a two-cell $c$. The diagrams associated to these generators are shown in Figure \ref{x2hom}. 
\begin{figure}[h]
\begin{picture}(40, 20) (0,0)
\put(-40, 0){\includegraphics[width=.4in]{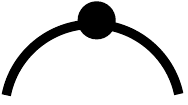}}
\put(-32, -10){$[p]$}
\put(49, -10){$[c]$}
\put(40,0){\includegraphics[width=.4in]{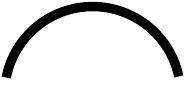}}
\end{picture}
\caption{Generators for $H_*(X_2)$}\label{x2hom}
\end{figure}
\begin{lemma}\label{fgmap}
The image in homology of the maps $f$ and $g$ is:
$$ f_*([p]) = [p\times p] = g_*([p])  \hspace{0.5in} \textup{ and } \hspace{0.5in} f_*([c]) = [c\times p] - [p\times c] = -g_*([c]).$$
\end{lemma}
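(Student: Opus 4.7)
The plan is to identify $X_2 \cong S^2$ via $(x,x) \mapsto x$ and then compute the pushforward degree-by-degree. With this identification, $f$ and $g$ become the maps $S^2 \to S^2 \times S^2$ given by $x \mapsto (x,-x)$ and $x \mapsto (-x,x)$, and the homology classes $[p] \in H_0(X_2)$ and $[c] \in H_2(X_2)$ correspond to the class of a point and the fundamental class of $S^2$.

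The statement for $[p]$ is essentially immediate: $S^2 \times S^2$ is path-connected, so $H_0(S^2 \times S^2) = \mathbb{C}\cdot [p \times p]$ is one-dimensional, and any continuous map between path-connected spaces sends $[p]$ to $[p \times p]$. Hence $f_*([p]) = [p \times p] = g_*([p])$.

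The content of the lemma is in degree two. The Künneth formula gives $H_2(S^2 \times S^2) = \mathbb{C}\cdot[c \times p] \oplus \mathbb{C}\cdot[p \times c]$, so I write $f_*([c]) = \alpha [c \times p] + \beta [p \times c]$ and solve for $\alpha, \beta$ by projecting. Let $\pi_1, \pi_2 : S^2 \times S^2 \to S^2$ denote the two projections; then $\pi_{1*}[c \times p] = [c]$, $\pi_{1*}[p \times c] = 0$, and symmetrically for $\pi_2$. Applying $\pi_{i*}$ to both sides and using functoriality $\pi_{i*} \circ f_* = (\pi_i \circ f)_*$, I get $\alpha [c] = (\pi_1 \circ f)_* [c]$ and $\beta [c] = (\pi_2 \circ f)_*[c]$. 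Since $\pi_1 \circ f = \mathrm{id}_{S^2}$ and $\pi_2 \circ f$ is the antipodal map, the computation reduces to the degree of each.

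The one geometric input is that the antipodal map $A : S^2 \to S^2$ has degree $(-1)^{2+1} = -1$, a standard fact. Thus $\alpha = 1$ and $\beta = -1$, yielding $f_*([c]) = [c \times p] - [p \times c]$. The same argument applied to $g$ interchanges the roles of the two factors: $\pi_1 \circ g = A$ contributes $-1$ to the coefficient of $[c \times p]$ and $\pi_2 \circ g = \mathrm{id}$ contributes $+1$ to the coefficient of $[p \times c]$, giving $g_*([c]) = -[c \times p] + [p \times c] = -f_*([c])$. There is no real obstacle beyond recalling the degree of the antipodal map on an even-dimensional sphere; everything else follows from Künneth and functoriality.
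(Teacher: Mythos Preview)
Your proof is correct and follows essentially the same approach as the paper: both arguments handle degree zero by connectedness, then in degree two write $f_*([c])$ in the K\"unneth basis and determine the coefficients by composing with the projections $\pi_1,\pi_2$, using that $\pi_1\circ f$ is the identity and $\pi_2\circ f$ is the antipodal map of degree $-1$.
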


\begin{proof}
We first consider these maps applied to the zero cell $p$.  We get $f(p) = (p, -p)$ and $g(p) = (-p, p)$. All points are homologous in a connected space so we obtain
\[ [(p,p)] = [(p, -p)] = [(-p,p)] \in H_*(S^2 \times S^2).  \] 
Then  $f_*([p]) = [p\times p] = g_*([p])$ as desired. In terms of diagrams this is  
$$f_*(\includegraphics[width=.3in]{x2dot.pdf}) = \begin{picture}(16, 10) (0,0)
\put(5,-5){\line(0,1){20}}
\put(5, 5){\circle*{5}}
\put(11,-5){\line(0,1){20}}
\put(11, 5){\circle*{5}}
\end{picture}
= g_*(\includegraphics[width=.3in]{x2dot.pdf}).$$

We now find $f_*([c])$ and $g_*([c])$.  Let $\pi_1, \pi_2: S^2 \times S^2 \rightarrow S^2$ denote projection to the respective coordinate and let $\iota_1, \iota_2: S^2 \hookrightarrow S^2 \times S^2$ denote inclusion to the respective coordinate, for instance $\iota_1(x) = (x,p)$ for all $x \in S^2$ and a fixed $p \in S^2$.  

Since $\iota_1(c) = c \times p$ we have $(\iota_1)_*([c]) = [c \times p]$ and similarly for $\iota_2$.  In addition $\pi_1 \circ \iota_1 = id$ and $(\pi_1 \circ \iota_2)(x)= p$ for all $x \in S^2$.  Hence $(\pi_1 \circ \iota_1)_*([c]) = [c]$ and $(\pi_1 \circ \iota_2)_*([c]) = 0$.  We conclude that $(\pi_1)_*([c \times p]) = [c]$ and $(\pi_1)_*([p \times c]) = 0$.  A symmetric argument proves $(\pi_2)_*([c \times p]) = 0$ and $(\pi_2)_*([p \times c]) = [c]$. 

Define $f_0: S^2 \rightarrow X_2$ by $f_0(x) = (x,-x)$.  Note that $f_0(c) = f(c)$ and so $(f_0)_*([c]) = f_*([c])$.  We compute the constants $c_1, c_2$ in 
\[(f_0)_*([c]) = c_1 [c \times p] + c_2 [p \times c] \] 
using the maps $\pi_1 \circ f_0$ and $\pi_2 \circ f_0$.  The composition $\pi_1 \circ f_0 = id$ so $(\pi_1 \circ f_0)_*([c]) = [c]$.  Applying each map sequentially gives 
\[(\pi_1)_*((f_0)_*([c])) = (\pi_1)_*(c_1 [c \times p] + c_2 [p \times c]) = c_1 [c]\]
and so $c_1 = 1$.  The composition $\pi_2 \circ f_0$ is the antipodal map $\alpha$ and $\alpha_*([c]) = -[c]$.  
Hence 
\[ (\pi_2)_*(c_1 [c \times p] + c_2 [p \times c]) = c_2 [c] = \alpha_*([c]) \]
so $c_2 = -1$.  Analogous arguments show $(g_0)_*([c]) = -(f_0)_*([c])$.  In terms of diagrams this is 
$$f_*(\includegraphics[width=.3in]{x2.pdf}) = \begin{picture}(14, 10) (0,0)
\put(5,-5){\line(0,1){20}}
\put(11, 5){\circle*{5}}
\put(11,-5){\line(0,1){20}}
\end{picture}
- 
\begin{picture}(14, 10) (0,0)
\put(3,-5){\line(0,1){20}}
\put(9,-5){\line(0,1){20}}
\put(3, 5){\circle*{5}}
\end{picture}
= -g_*(\includegraphics[width=.3in]{x2.pdf}). $$
\end{proof}

For fixed (even) $n$ and each $0\leq m\leq \frac{n}{2}$, define $G^m: (X_2)^{\frac{n}{2}} \rightarrow (S^2)^n$ to be $$ G^m = g\times  \cdots \times g \times f \times  \cdots \times f$$ where $g$ is applied to the first $m$ copies of $X_2$ and $f$ is applied to the last $n/2-m$ copies. Let $a_0$ be the completely unnested matching, namely  the matching with arcs $(1,2), \ldots, (n-1, n)$. Note that $S_{a_0} = (X_2)^{n/2}$ so the maps $G^m$ can be thought of as having domain $S_{a_0}$. 

\begin{lemma}\label{Fmaps}
For all $0\leq m \leq \frac{n}{2}$ and cartesian product generators $q \in H_*((X_2)^{\frac{n}{2}})$ the maps $G_*^m$ satisfy $G_*^m(q) = \pm G_*^{n/2}(q)$.
\end{lemma}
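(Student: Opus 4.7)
The plan is to exploit the fact that both $G^m$ and $G^{n/2}$ are literal cartesian products of the maps $f$ and $g$, so that the Künneth isomorphism reduces the claim to a factorwise comparison using Lemma \ref{fgmap}.

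First I would write a cartesian product generator $q \in H_*((X_2)^{n/2})$ as $q = q_1 \otimes \cdots \otimes q_{n/2}$ with each $q_i \in \{[p], [c]\}$. Because $H_*(S^2)$ is free of finite rank in each degree, the Künneth theorem identifies the homology of every cartesian product in sight with the tensor product of the factors' homologies, and identifies the induced map of a cartesian product of maps with the tensor product of the induced maps. Under this identification
\[ G^m_*(q) = g_*(q_1) \otimes \cdots \otimes g_*(q_m) \otimes f_*(q_{m+1}) \otimes \cdots \otimes f_*(q_{n/2}), \]
\[ G^{n/2}_*(q) = g_*(q_1) \otimes \cdots \otimes g_*(q_{n/2}). \]

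Next I would compare these tensor products factor by factor using Lemma \ref{fgmap}: on $[p]$ the maps $f_*$ and $g_*$ agree, while on $[c]$ they differ only by a sign. Thus for each position $j \leq m$ the two expressions have identical $j$th tensor factors, while for each position $j > m$ they agree if $q_j = [p]$ and differ by a sign of $-1$ if $q_j = [c]$. Setting $N(q,m) := |\{j : m < j \leq n/2,\ q_j = [c]\}|$ and pulling the signs out of the tensor product yields
\[ G^m_*(q) = (-1)^{N(q,m)}\, G^{n/2}_*(q), \]
which is the stated identity with an explicit sign.

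There is no serious obstacle here: the argument is essentially bookkeeping once the product decomposition and Lemma \ref{fgmap} are in hand. The only mild subtlety is justifying that the external Künneth product on homology is multiplicative with respect to cartesian products of continuous maps, which is standard in the torsion-free setting that applies to products of spheres.
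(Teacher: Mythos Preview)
Your proof is correct and follows essentially the same approach as the paper: both use the K\"unneth decomposition to write $G^m_*(q)$ as a tensor product of factorwise maps and then invoke Lemma~\ref{fgmap} to compare $f_*$ and $g_*$ on each factor. Your version is slightly more explicit in tracking the sign as $(-1)^{N(q,m)}$, but the argument is the same.
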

\begin{proof}
Consider the cartesian product decomposition on $(X_2)^{n/2} = S_{a_0}$ and let $q\in H_*(S_{a_0})$ be a cartesian product generator. Then $q$ has the form $q = [q_1 \times  \cdots \times q_{n/2}]$ where each $q_i = p$ or $c$. Since $H_*((S^2)^n)) \cong \bigotimes_{i=1}^{n/2} H_*(S^2\times S^2) $ we have 
\begin{eqnarray*}
 G_*^m(q) & = &  (g\times \cdots \times g \times f \times f)_*(q)\\
& = & g_*(q_1)\otimes \cdots \otimes g_*(q_m) \otimes f_*(q_{m+1}) \otimes \cdots \otimes f_*(q_{n/2})
\end{eqnarray*}

Lemma \ref{fgmap} showed $f_*(q_i) = \pm g_*(q_i)$ for all $i$, and thus
$$G_*^m(q) = g_*(q_1)\otimes \cdots \otimes g_*(q_m) \otimes \pm g_*(q_{m+1}) \otimes \cdots \otimes \pm g_*(q_{n/2}) = \pm G_*^{n/2}(q).$$ 
\end{proof}

Define the embedding $\gamma: X_n \rightarrow (S^2)^n$ as $$\gamma(x_1, \ldots, x_n) = (-x_1, x_2, \ldots, -x_{n-1}, x_n) = ((-1)^{i}x_i)$$ where $-x$ again denotes the image of $x$ under the antipodal map $S^2 \rightarrow S^2$.  For each matching $a\in B^{n/2}$, define an embedding of the component $\gamma_a: S_a \rightarrow (S^2)^{n/2}$ to be the identity on coordinates corresponding to right endpoints of arcs and the antipodal map on coordinates corresponding to left endpoints. Explicitly let $\rho_{a}$ be the binary function $$\rho_{a}(i) = 
\begin{cases}
1 \text{ if there is $j$ with } (i,j)\in a\\
0 \text{ if there is $j$ with } (j,i) \in a
\end{cases}$$ 
Then  $\gamma_a: S_a \rightarrow (S^2)^n$ is defined to be $\gamma_a(x_1, \ldots, x_n) = ((-1)^{\rho_a(i)}x_i).$

For example let $n=6$ and let $a$ be the matching in Figure \ref{matex}.
 \begin{figure}[h]
\includegraphics[width=.8in]{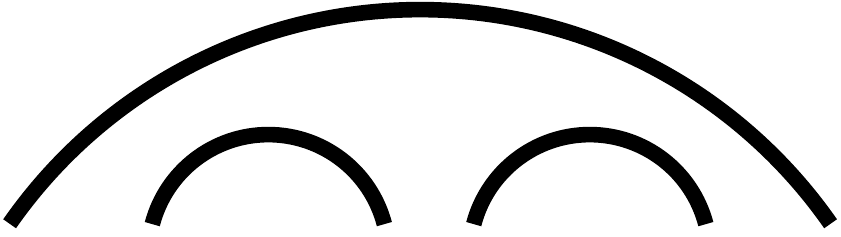}
\caption{The matching $a$}\label{matex}
\end{figure}
Then each $\vec{v} \in S_a$ has the form $\vec{v} = (x,y,y,z,z,x)$ where $x,y,z\in S^2$. The antipodal maps $\gamma$ and $\gamma_a$ applied to these elements give  
$$\gamma (\vec{v}) = (-x,y,-y,z,-z,x) \hspace{.3in} \text{  and} \hspace{.3in} \gamma_a(\vec{v}) = (-x, -y, y, -z, z, x) .$$
The map $\gamma_a$ is related to the parity of coordinates, as the next lemma shows.

\begin{lemma}\label{parity}
Each arc in a noncrossing matching has one even and one odd endpoint.
\end{lemma}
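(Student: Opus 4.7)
The plan is to give a very short direct argument based on the noncrossing property: for any arc $(i,j)$ with $i < j$ in a noncrossing matching, the vertices strictly between $i$ and $j$ must be matched only to other vertices strictly between $i$ and $j$, so there must be an even number of them.

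More concretely, I would fix an arbitrary arc $(i,j)$ with $i<j$ and consider any vertex $k$ with $i < k < j$. If $k$ were matched to some vertex $k'$ outside the interval $(i,j)$, then the arc $(k,k')$ would have exactly one endpoint inside the open interval $(i,j)$ and one endpoint outside, so it would cross the arc $(i,j)$, contradicting the noncrossing assumption. Therefore the set $\{i+1, i+2, \ldots, j-1\}$ is closed under the matching, i.e.\ it is partitioned into matched pairs.

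Since a set that is partitioned into pairs must have even cardinality, $j - i - 1$ is even, hence $j - i$ is odd, and so $i$ and $j$ have opposite parities. This gives exactly one even and one odd endpoint for the arc $(i,j)$. Since $(i,j)$ was arbitrary, the claim follows.

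The argument has no real obstacle; the only thing to be careful about is the logical structure of the noncrossing condition, namely that "noncrossing" for arcs on a line is equivalent to saying that for any arc $(i,j)$, every other arc is either entirely nested inside $(i,j)$ or entirely outside it. Once that observation is recorded, the parity conclusion is immediate from counting interior vertices.
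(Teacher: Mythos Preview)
Your proof is correct and follows essentially the same argument as the paper's own proof: the paper also observes that the vertices strictly between $i$ and $j$ must be paired among themselves (by the noncrossing condition), so there are an even number of them, forcing $i$ and $j$ to have opposite parities. Your version simply spells out in more detail why an arc reaching outside the interval would cross $(i,j)$.
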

\begin{proof}
Take $a\in B^{n/2}$ and an arc $(i,j) \in a$. In order for $a$ to be noncrossing, all vertices between $i$ and $j$ must be paired among themselves. This means that there are an even number of vertices between $i$ and $j$, so $i$ and $j$ have opposite parities. 
\end{proof}

Again let $a_0$ be the matching with arcs $(1,2), (3,4), \ldots, (n-1, n)$. Let $a\in B^{n/2}$ be any matching and let $m$ be the number of arcs in $a$ with odd left endpoints. Order the arcs in $a$ by listing all arcs with odd left endpoints first, and otherwise listing from least left endpoint to greatest left endpoint.  This gives a list $e_1, \ldots,  e_{n/2}$ where the arcs $e_1, \ldots , e_{m}$ have odd left endpoints and the arcs $e_{m+1}, \ldots , e_{n/2}$ have even left endpoints. Define $\sigma_a$ to be the permutation that maps the arc $(2k-1, 2k)$ to the arc $e_k$ so that $\sigma_a(2k-1)$ is the left endpoint of $e_k$ and $\sigma_a(2k)$ is the right endpoint.  Hence $\sigma_a$ sends the matching $a_0$ to $a$. Let $\sigma_a$ also denote the map that permutes coordinates of  $(S^2)^n$ so that
$$\sigma_a(x_1, \ldots , x_n) = (x_{\sigma_a^{-1}(1)}, \ldots , x_{\sigma_a^{-1}(n)}).$$ 

For example consider the matching in Figure \ref{matex}. There is only one arc with an odd left endpoint so $m=1$. The ordered list of arcs in $a$ is $(1,6), (2,3), (4,5)$ and the permutation is $\sigma_a = (2 6 5 4 3)$ in cycle notation. For $x,y,z\in S^2$ the map $\sigma_a: S_{a_0} \rightarrow S_a$ will be
$$ (x,x,y,y,z,z) \mapsto (x,y,y,z,z,x).$$
\begin{lemma}\label{comp1}
Choose a matching $a\in B^{n/2}$ and let $m$ be the number of arcs in $a$ with odd left endpoints. Then the maps $\gamma|_{S_a}$ and $\sigma_a \circ G^m \circ \sigma_a^{-1}$ from $S_a$ to $(S^2)^n$ are equal.
\end{lemma}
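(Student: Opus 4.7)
The plan is to reduce the claim to a pointwise parity check on coordinates. First I would unpack what each of the three maps does to a point $\vec{v} = (v_1,\dots,v_n) \in S_a$. The embedding $\gamma|_{S_a}$ is simply $\vec{v} \mapsto ((-1)^i v_i)$, i.e., negate the odd-indexed entries. The map $G^m$ sends $(y_1,\dots,y_n) \in S_{a_0}$ (so $y_{2k-1}=y_{2k}$) to the tuple that applies $g$ to the first $m$ pairs (negating the odd coordinate $2k-1$ in each) and $f$ to the last $n/2-m$ pairs (negating the even coordinate $2k$ in each). So $G^m$ negates coordinate $j=2k-1$ when $k\le m$ and coordinate $j=2k$ when $k>m$; call this the \emph{sign pattern} $\epsilon(j)$ on $\{1,\dots,n\}$.

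Next I would verify that the conjugation $\sigma_a \circ G^m \circ \sigma_a^{-1}$ even makes sense on $S_a$: since $\sigma_a$ sends the arcs $(2k-1,2k)$ of $a_0$ to the ordered arcs $e_k$ of $a$, the map $\sigma_a^{-1}$ carries $S_a$ into $S_{a_0}$, permuting coordinates by $\vec{v}\mapsto (v_{\sigma_a(1)},\dots,v_{\sigma_a(n)})$. Applying $G^m$ to this then multiplies the $j$-th coordinate by $(-1)^{\epsilon(j)}$, and finally applying $\sigma_a$ produces the vector whose $i$-th coordinate is $(-1)^{\epsilon(\sigma_a^{-1}(i))} v_i$. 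Thus the lemma reduces to proving the identity
\[
(-1)^{\epsilon(\sigma_a^{-1}(i))} = (-1)^i \qquad \text{for all } i=1,\dots,n.
\]

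To verify this identity, write $j=\sigma_a^{-1}(i)$ and split into cases based on whether $j$ is $2k-1$ or $2k$, and whether $k\le m$ or $k>m$. By construction $\sigma_a(2k-1)$ is the left endpoint of $e_k$ and $\sigma_a(2k)$ is its right endpoint; moreover the ordering was chosen so that $e_1,\dots,e_m$ are precisely the arcs of $a$ with odd left endpoint and the remaining arcs have even left endpoint. Lemma \ref{parity} then guarantees that the two endpoints of each arc have opposite parities, so in all four cases the parity of $i=\sigma_a(j)$ matches $\epsilon(j)$ exactly (odd $i$ coming from the $\epsilon(j)=1$ side, even $i$ from the $\epsilon(j)=0$ side). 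This finishes the check.

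The main obstacle is bookkeeping: it is easy to confuse $\sigma_a$ with $\sigma_a^{-1}$ on points versus on coordinate indices, and to lose track of which of the two possible conventions ($\sigma_a$ permutes arcs of $a_0$ to arcs of $a$, but the action on $(S^2)^n$ goes through $\sigma_a^{-1}$ on indices). The conceptual content, however, is transparent once the sign pattern $\epsilon$ is introduced: $G^m$ is designed so that \emph{before} conjugation it negates one coordinate per pair with a distribution of odd/even positions that is exactly the distribution of odd/even left endpoints in $a$, and Lemma \ref{parity} is what forces the remaining (unnegated) coordinate in each pair to be of even index in $(S^2)^n$ after conjugation, matching $\gamma$.
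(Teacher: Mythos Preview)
Your proposal is correct and follows essentially the same approach as the paper's proof: both track a single coordinate through the composition $\sigma_a \circ G^m \circ \sigma_a^{-1}$ and reduce to a parity check, invoking Lemma~\ref{parity} for the cases where the right endpoint is involved. Your introduction of the sign pattern $\epsilon$ is a mild notational convenience, but the four-case verification you outline is exactly the one the paper carries out (the paper organizes the cases by the parity of $i$ and whether $i$ is a left or right endpoint of its arc, which is equivalent to your split on $j=2k-1$ versus $j=2k$ and $k\le m$ versus $k>m$).
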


\begin{proof}
Choose $i$ with $1\leq i \leq n$. The map $\sigma_a^{-1}$ sends arcs in $a$ with odd left endpoints to the arcs $(1,2), (3,4), \ldots, (2m-1,2m)$ and arcs with even left endpoints to $(2m+1,2m+2), \ldots, (n-1,n)$.  The map $G^m$ acts as $g$ on the first $m$ arcs and as $f$ on the last $n/2-m$ arcs; this only affects the sign of each coordinate.  Then $\sigma_a$ returns each coordinate to its original position.

By Lemma \ref{parity}  each arc has one odd and one even endpoint.  Hence if $i$ is odd and a left endpoint in $a$ then $\sigma_a^{-1}$ sends $i$ to one of $1,3,5,\ldots, 2m-1$; if $i$ is odd and a right endpoint in $a$ then $\sigma_a^{-1}$ sends $i$ to one of $2m+2, 2m+4, \ldots, n$.  In either case
$$x_i \overset{\sigma_a^{-1}}{\rightarrow} x_{\sigma_a^{-1}(i)} \overset{G^m}{\rightarrow} - x_{\sigma_a^{-1}(i)}  \overset{\sigma_a}{\rightarrow} - x_i.$$
Similarly if $i$ is even and either a left or right endpoint in $a$ then
$$x_i \overset{\sigma_a^{-1}}{\rightarrow} x_{\sigma_a^{-1}(i)} \overset{G^m}{\rightarrow} x_{\sigma_a^{-1}(i)}  \overset{\sigma_a}{\rightarrow} x_i.$$
The composition $\sigma_a \circ G^m \circ \sigma_a^{-1}$ is the identity on even coordinates and the antipodal map on odd coordinates so $\gamma|_{S_a} = \sigma_a \circ G^m \circ \sigma_a^{-1}$.
\end{proof}

\begin{lemma}\label{comp2}
For each matching $a\in B^{n/2}$ the maps $\gamma_a$ and $\sigma_a \circ G^{n/2} \circ \sigma_a^{-1}$ from $S_a$ to $(S^2)^n$ are equal.
\end{lemma}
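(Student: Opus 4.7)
The plan is to prove this by direct coordinate tracking, essentially following the template used for Lemma \ref{comp1} but simplified because $G^{n/2}$ has a single uniform form (all $g$'s, no $f$'s). First I would unpack what $G^{n/2}$ does on $S_{a_0}=(X_2)^{n/2}$. Since $g(x,x)=(-x,x)$, the product map $G^{n/2}=g\times g\times\cdots\times g$ applies the antipodal map to every odd-indexed coordinate and the identity to every even-indexed coordinate, regardless of which arc of $a_0$ we are looking at.

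Next I would examine $\sigma_a$ carefully. By its very definition, $\sigma_a$ sends the arc $(2k-1,2k)$ of $a_0$ to the $k$-th arc $e_k$ of $a$ with $\sigma_a(2k-1)$ equal to the left endpoint of $e_k$ and $\sigma_a(2k)$ equal to the right endpoint. Consequently, for $i\in\{1,\dots,n\}$, the preimage $\sigma_a^{-1}(i)$ is odd precisely when $i$ is a left endpoint of some arc in $a$, and even precisely when $i$ is a right endpoint. This is the key observation that drives the argument, and unlike in Lemma \ref{comp1} we do not need to split into four cases based on parity of $i$.

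Combining the two steps, I would trace an arbitrary coordinate $x_i$ through the composition $\sigma_a\circ G^{n/2}\circ\sigma_a^{-1}$. If $i$ is a left endpoint of some arc in $a$, then $\sigma_a^{-1}$ moves $x_i$ into an odd slot, $G^{n/2}$ negates it, and $\sigma_a$ returns it to position $i$, producing $-x_i$. If $i$ is a right endpoint, $\sigma_a^{-1}$ moves $x_i$ to an even slot where $G^{n/2}$ acts as the identity, so $x_i$ is unchanged. This is exactly the formula $\gamma_a(x_1,\ldots,x_n)=((-1)^{\rho_a(i)}x_i)$ by the definition of $\rho_a$, completing the proof.

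I do not expect any real obstacle here: the argument is essentially parity bookkeeping, and the only care required is to verify that the $\sigma_a^{-1}$/$\sigma_a$ conjugation preserves the coordinate index while routing the sign change through an odd slot exactly when $i$ is a left endpoint of $a$. The fact that we use $G^{n/2}$ (all $g$'s) rather than a mixed $G^m$ is what makes this work uniformly for every $a$, since $g$ is consistent with putting left endpoints at odd positions as $\sigma_a$ does.
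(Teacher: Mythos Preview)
Your proposal is correct and follows essentially the same approach as the paper's proof: both track a coordinate $x_i$ through the composition, using that $\sigma_a^{-1}$ sends left endpoints of $a$ to odd positions and right endpoints to even positions, so that $G^{n/2}$ (which applies $g$ to every pair) negates exactly the left-endpoint coordinates. The paper's version is terser, but your explicit identification of why $\sigma_a^{-1}(i)$ is odd precisely when $i$ is a left endpoint is the same key observation.
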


\begin{proof}
The map $\sigma_a^{-1}$ sends every arc of $a$ to an arc of the form $(2k-1,2k)$.  The map $G^{n/2}$ applies $g$ to the coordinates corresponding to each of these arcs and $\sigma_a$ returns them to their original positions.  Hence the composition is the identity on coordinates corresponding to right endpoints and is the antipodal map on coordinates corresponding to left endpoints. In other words $\gamma_a = \sigma_a^{-1} \circ G^{n/2} \circ \sigma_a$.
\end{proof}

\begin{corollary}\label{finally}
The image $\gamma_*(H_*(X_n))$ is the vector subspace of $H_*((S^2)^n)$ generated by $\bigcup_{a} \left((\gamma_a )_*(H_*(S_a))\right)$.
\end{corollary}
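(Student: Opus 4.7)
The plan is to combine the three preceding lemmas (\ref{Fmaps}, \ref{comp1}, \ref{comp2}) with the fact that $H_*(X_n)$ is spanned by classes supported on the individual components $S_a$.

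First I would observe that, since $X_n = \bigcup_{a} S_a$, the proposition of Russell recalled in Section \ref{KhovCon} says that every homology class in $H_*(X_n)$ is represented by a sum of dotted noncrossing matchings, each of which lies in (the image of) some $H_*(S_a)$. Writing $\iota_a \colon S_a \hookrightarrow X_n$ for the inclusion, this means $H_*(X_n) = \sum_{a} (\iota_a)_*\bigl(H_*(S_a)\bigr)$. Applying $\gamma_*$ and using $\gamma \circ \iota_a = \gamma|_{S_a}$ yields
\[\gamma_*\bigl(H_*(X_n)\bigr) \;=\; \sum_{a \in B^{n/2}} \bigl(\gamma|_{S_a}\bigr)_*\bigl(H_*(S_a)\bigr).\]

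Next, for each fixed matching $a$ with $m$ arcs having odd left endpoints, I would compare $\gamma|_{S_a}$ to $\gamma_a$ using the factorizations from Lemmas \ref{comp1} and \ref{comp2}, which give $\gamma|_{S_a} = \sigma_a \circ G^{m} \circ \sigma_a^{-1}$ and $\gamma_a = \sigma_a \circ G^{n/2} \circ \sigma_a^{-1}$. Since $\sigma_a$ is a homeomorphism of $(S^2)^n$, its induced map $(\sigma_a)_*$ is a linear isomorphism that sends cartesian product generators of $H_*(S_a)$ to cartesian product generators of $H_*(S_{a_0})$ (and vice versa on the codomain side). Hence for any cartesian product generator $q \in H_*(S_a)$, Lemma \ref{Fmaps} applied to $(\sigma_a^{-1})_*(q) \in H_*(S_{a_0})$ gives
\[\bigl(\gamma|_{S_a}\bigr)_*(q) \;=\; (\sigma_a)_* \bigl(G^{m}_* (\sigma_a^{-1})_*(q)\bigr) \;=\; \pm\, (\sigma_a)_* \bigl(G^{n/2}_* (\sigma_a^{-1})_*(q)\bigr) \;=\; \pm\, (\gamma_a)_*(q).\]
Since the cartesian product generators span $H_*(S_a)$, the two images $(\gamma|_{S_a})_*(H_*(S_a))$ and $(\gamma_a)_*(H_*(S_a))$ coincide as linear subspaces of $H_*((S^2)^n)$.

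Combining the two displays gives
\[\gamma_*\bigl(H_*(X_n)\bigr) \;=\; \sum_{a \in B^{n/2}} (\gamma_a)_*\bigl(H_*(S_a)\bigr),\]
which is exactly the claimed description of the image as the span of $\bigcup_a (\gamma_a)_*(H_*(S_a))$.

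The one point that deserves a bit of care, and which I expect to be the main conceptual obstacle, is the first step: ensuring that $H_*(X_n) = \sum_a (\iota_a)_*(H_*(S_a))$. This is not automatic for arbitrary unions of subspaces, but it holds here because Russell's description exhibits $H_*(X_n)$ as a quotient of $\bigoplus_a H_*(S_a)$, which is precisely the statement that the component inclusions are jointly surjective on homology. Once this is in hand, the rest of the argument is a formal bookkeeping of signs supplied by Lemma \ref{Fmaps}.
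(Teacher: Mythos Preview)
Your argument is correct and follows the same route as the paper: use Lemmas \ref{comp1} and \ref{comp2} to factor $\gamma|_{S_a}$ and $\gamma_a$ through $G^m$ and $G^{n/2}$ respectively, then invoke Lemma \ref{Fmaps} to conclude $(\gamma|_{S_a})_*(q) = \pm(\gamma_a)_*(q)$ on generators. You are in fact more explicit than the paper in justifying the preliminary step $H_*(X_n) = \sum_a (\iota_a)_*(H_*(S_a))$ via Russell's presentation, which the paper leaves implicit.
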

\begin{proof}
Lemmas \ref{comp1} and \ref{comp2} proved $\gamma_a = \sigma_a \circ G^{n/2} \circ \sigma_a^{-1}$ and $\gamma|_{S_a} = \sigma_a \circ G^m \circ \sigma_a^{-1}$.  Lemma \ref{Fmaps} showed that $G^m_*(q) = \pm G^{n/2}_*(q)$ for each homology generator of $H_*((X_2)^{n/2})$. This means
$$(\gamma|_{S_a})_*(q) = (\sigma_a  \circ \pm G^{n/2} \circ \sigma_a^{-1})_* (q) = \pm (\gamma_a)_*(q)$$
for each homology generator $q \in H_*(S_a)$.  As a result the complex vector subspace  $\gamma_*(H_*(X_n))$ in $H_*((S^2)^n)$ equals the subspace of $H_*((S^2)^n)$ spanned by $\bigcup_{a} \left((\gamma_a )_*(H_*(S_a))\right)$ . 
\end{proof}

\subsection{The $S_n$ action on $H_*(X_n)$}

There is a natural $S_n$ action on $(S^2)^n$ that permutes coordinates, in which $\sigma \in S_n$ acts on $(x_1, \ldots, x_n) \in (S^2)^n$ by $\sigma \cdot (x_1, \ldots, x_n) = (x_{\sigma^{-1}(1)}, \ldots, x_{\sigma^{-1}(n)})$.  This section proves that the induced $S_n$ action on $H_*((S^2)^n)$ restricts to the image $\gamma_*(H_*(X_n))$.  The proof computes the action of each simple transposition $s_i$ on each homology generator of $H_*(X_n)$ and shows it is a linear combination of elements of $H_*(X_n)$.  

We note that this is not true for most embeddings $X_n \rightarrow (S^2)^n$.  In fact similar computations  to Lemma \ref{fgmap} show it fails for the identity embedding even when $n=4$.  

Both $\gamma_*$ and the collection $\{(\gamma_a)_*\}$ map to the same vector subspace of $H_*((S^2)^n)$, but they  may differ by a sign on homology generators. To establish a sign convention, we choose a set of generators of $\gamma_*(H_*(X_n))$ as follows.
\begin{definition}
 Given a standard dotted noncrossing matching $M \in H_*(X_n)$ and the component $S_a$ for which $M\in H_*(S_a)$, define the generator of $\gamma_*(H_*(X_n))$ corresponding to $M$ to be $(\gamma_a)_*(M)$.
\end{definition}
The generator $(\gamma_a)_*(M)$ has many useful combinatorial properties which the following definitions will help make explicit.
\begin{definition}
The undot set of a line diagram is the set of all vertices which have undotted line segments. For example, the line diagrams in Figure \ref{lndg} have undot sets $\{ 2, 4 \}$ and $\{ 1,2,3 \}$.
\end{definition}  
 
The undot sets provide a bijective correspondence between line diagrams on $n$ nodes and subsets of $\{ 1, \ldots , n \}$. If $U$ is such a subset, denote by $l_U$ the line diagram with undot set $U$. 

Each standard dotted noncrossing matching determines a {\em line diagram sum}.
\begin{definition}
Let $M$ be a standard dotted noncrossing matching on $n$ vertices. Define $\mathcal{U}_M$ to be the collection of undot sets given by
\[\mathcal{U}_M = \{U \subseteq \{1,2,\ldots, n\}: U \textup{ contains exactly one endpoint of each undotted arc in }M\}.\]

For each set $U \in \mathcal{U}_M$, let $\Lambda_M(U)$ be the number of left endpoints in $U$.
Define the line diagram sum $L_M$ of $M$ to be
$$L_M = \sum_{U\in \mathcal{U}_M} (-1)^{\Lambda_M(U)} l_U.$$ 
\end{definition}

We give an example of $\mathcal{U}_M$ and $L_M$ for $M=$ \includegraphics[width=.6in]{unnest2.pdf}. The matching $M$ has undotted arc $(1,2)$ so $\mathcal{U}_M = \{ \{1\}, \{2 \} \}$ and $L_M$ is  
 $\begin{picture}(26, 9) (0,0)
\put(5,-5){\line(0,1){20}}
\put(5, 5){\circle*{5}}
\put(11,-5){\line(0,1){20}}
\put(17,-5){\line(0,1){20}}
\put(17, 5){\circle*{5}}
\put(23,-5){\line(0,1){20}}
\put(23,5){\circle*{5}}
\end{picture} - 
 \begin{picture}(26, 12) (0,0)
\put(0,-5){\line(0,1){20}}
\put(6, 5){\circle*{5}}
\put(6,-5){\line(0,1){20}}
\put(12,-5){\line(0,1){20}}
\put(12, 5){\circle*{5}}
\put(18,-5){\line(0,1){20}}
\put(18,5){\circle*{5}}
\end{picture}.$

\begin{lemma} \label{addarc}
Suppose the standard dotted noncrossing matching $M'$ is obtained from $M$ by inserting an arc $(i,j)$ with $1\leq i < j \leq n+2$. If $(i,j)$ is a dotted arc then
$$L_{M'} = \sum_{U\in \mathcal{U}_M} (-1)^{\Lambda_M(U)} l_{\widetilde{U_{\emptyset}}}$$ and if $(i,j)$ is an undotted arc then
$$L_{M'} = \sum_{U\in \mathcal{U}_M} (-1)^{\Lambda_M(U)} l_{\widetilde{U_j}} - \sum_{U\in \mathcal{U}_M} (-1)^{\Lambda_M(U)} l_{\widetilde{U_i}}$$ 
where if $S \subseteq \{i,j\}$ then $\widetilde{U}_S$ denotes the undot set obtained from $U$ by
\[\widetilde{U}_S = S \cup \{x \in U: x < i\} \cup \{x+1: x \in U \textup{ and } i \leq x < j \} \cup \{x+2: x \in U \textup{ and } j \leq x \}.\]
\end{lemma}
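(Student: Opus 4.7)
My plan is to prove the lemma by a case analysis on whether the newly inserted arc $(i,j)$ is dotted or undotted, and in each case to exhibit an explicit bijection between $\mathcal{U}_M$ and the relevant portion of $\mathcal{U}_{M'}$ together with a careful tracking of the $\Lambda$-parity.

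First I would observe that the operation $U \mapsto \widetilde{U}_S$ packages together two independent pieces of data: the relabeling of old vertex positions to their new positions in $M'$ (the three shift cases $x$, $x+1$, $x+2$ according to whether $x$ lies before, between, or after the inserted positions) and the choice $S \subseteq \{i,j\}$ of which endpoints of the new arc to place in the undot set. The key observation is that inserting $(i,j)$ only shifts the labels of old vertices; it does not alter the arc structure, dot status, or left/right-endpoint status of any arc of $M$. Consequently, for each arc of $M'$ that comes from $M$, the set $\widetilde{U}_S$ contains the correct number of endpoints for membership in $\mathcal{U}_{M'}$ if and only if $U$ does for the corresponding arc in $\mathcal{U}_M$. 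The only new constraint on $\widetilde{U}_S$ that must be verified concerns the new arc $(i,j)$.

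In the dotted case, $\widetilde{U}_S$ must contain neither $i$ nor $j$, forcing $S = \emptyset$, and the map $U \mapsto \widetilde{U}_\emptyset$ is a bijection $\mathcal{U}_M \to \mathcal{U}_{M'}$. Since no new undotted endpoints are introduced, the left endpoints of $\widetilde{U}_\emptyset$ in $M'$ are in bijection with the left endpoints of $U$ in $M$, giving $\Lambda_{M'}(\widetilde{U}_\emptyset) = \Lambda_M(U)$; reindexing the sum defining $L_{M'}$ by this bijection yields the claimed formula. In the undotted case, $\widetilde{U}_S$ must meet $\{i,j\}$ in exactly one element, so $S \in \{\{i\},\{j\}\}$. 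Each choice gives a bijection $\mathcal{U}_M \to \{U' \in \mathcal{U}_{M'} : U' \cap \{i,j\} = S\}$, and these two pieces partition $\mathcal{U}_{M'}$. When $S = \{j\}$ only a right endpoint is added, so $\Lambda_{M'}(\widetilde{U}_{\{j\}}) = \Lambda_M(U)$; when $S = \{i\}$ a left endpoint is added, so $\Lambda_{M'}(\widetilde{U}_{\{i\}}) = \Lambda_M(U) + 1$, which produces the minus sign in front of the second sum. Adding the two contributions gives the stated expression for $L_{M'}$.

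I expect the main obstacle to be purely bookkeeping: confirming that the piecewise shift formula in the definition of $\widetilde{U}_S$ matches the chosen convention for inserting two new vertices into $M$ at positions $i$ and $j$ of $M'$, and that under this shift the left/right-endpoint role of every old vertex is preserved. Once those conventions are pinned down the bijections and parity identifications are immediate, and the two cases of the lemma follow directly.
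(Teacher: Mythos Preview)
Your proposal is correct and follows essentially the same approach as the paper's proof: both arguments set up the vertex-relabeling map, observe that it induces a bijection $\mathcal{U}_M \to \mathcal{U}_{M'}$ in the dotted case and a pair of bijections onto the two halves of $\mathcal{U}_{M'}$ in the undotted case, and then check the parity identities $\Lambda_{M'}(\widetilde{U}_\emptyset)=\Lambda_M(U)$, $\Lambda_{M'}(\widetilde{U}_{\{j\}})=\Lambda_M(U)$, and $\Lambda_{M'}(\widetilde{U}_{\{i\}})=\Lambda_M(U)+1$. Your framing of $\widetilde{U}_S$ as ``relabeling plus choice of $S$'' and your explicit remark that the shift preserves left/right-endpoint status are exactly the bookkeeping points the paper handles, so there is no substantive difference.
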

\begin{proof}
Assume $M'$ is obtained from $M$ by inserting the arc $(i,j)$ for $1\leq i < j \leq n+2$. Define the function $c: \{1, \ldots , n\} \rightarrow \{ 1, \ldots , n+2 \}$ by
$$c(x) =   \begin{cases}
x \text{ if } x<i, \\
x+1 \text{ if } i\leq x < j, \textup{ and } \\
x+2 \text{ if } j\leq x.
\end{cases}$$ 

First assume $(i,j)$ is a dotted arc. No additional undotted arcs have been added to the matching $M'$ so $|\mathcal{U}_M| = |\mathcal{U}_{M'}|$. Let $U\in \mathcal{U}_M$ and let $\widetilde{U_{\emptyset}}= \{ c(x) : x\in U \}$.  The line diagram $l_{\widetilde{U_{\emptyset}}}$ is obtained from $l_U$ by inserting one dotted segment between vertices $i-1$ and $i$ and another between vertices $j-1$ and $j$. 
For each $U\in \mathcal{U}_{M}$ we have $\Lambda_M(U) = \Lambda_{M'}(\widetilde{U_{\emptyset}})$.
We find
$$L_{M'} = \sum_{U\in \mathcal{U}_M} (-1)^{\Lambda_M(U)} l_{\widetilde{U_{\emptyset}}}.$$

Now assume $(i,j)$ is undotted. One undotted arc has been added to $M'$ so $|\mathcal{U}_{M'}| = 2\cdot |\mathcal{U}_{M}|$.  For each $U\in \mathcal{U}_M$ we construct two distinct undot sets $\widetilde{U_i}, \widetilde{U_j} \in \mathcal{U}_{M'}$. Let $\widetilde{U_i} =\widetilde{U_{\emptyset}} \cup \{ i \}$ and $\widetilde{U_j} = \widetilde{U_{\emptyset}} \cup \{ j \}$.   The line diagram $l_{\widetilde{U_i}}$ is obtained from $l_U$ by inserting an undotted segment between vertices $i-1$ and $i$ and a dotted segment between vertices $j-1$ and $j$; the diagram $l_{\widetilde{U_j}}$ reverses the roles of $i$ and $j$. Hence $\Lambda_M(U) = \Lambda_{M'}(\widetilde{U_j})$ and $\Lambda_M(U) + 1 = \Lambda_{M'}(\widetilde{U_i})$ and
$$L_{M'} = \sum_{U\in \mathcal{U}_M} (-1)^{\Lambda_M(U)} l_{\widetilde{U_j}} - \sum_{U\in \mathcal{U}_M} (-1)^{\Lambda_M(U)} l_{\widetilde{U_i}}.$$
 \end{proof}

 \begin{lemma}
 If $M\in H_*(S_a)$ is a standard dotted noncrossing matching $(\gamma_a)_*(M) = L_M$. 
 \end{lemma}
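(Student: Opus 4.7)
The plan is to induct on the number of arcs in $M$, combining Lemma \ref{comp2} (which identifies $\gamma_a$ with $\sigma_a \circ G^{n/2} \circ \sigma_a^{-1}$), Lemma \ref{fgmap} (which computes $g_*([p])$ and $g_*([c])$), and Lemma \ref{addarc} (which describes how $L_M$ changes when an arc is inserted). The base case is a single-arc matching on two vertices, where $\gamma_a = g$ and the identification $(\gamma_a)_*(M) = L_M$ follows directly from Lemma \ref{fgmap}: if the arc is dotted then $g_*([p]) = [p \times p] = L_M$ (with $\mathcal{U}_M = \{\emptyset\}$), while if the arc is undotted then $g_*([c]) = [p \times c] - [c \times p] = l_{\{2\}} - l_{\{1\}} = L_M$ (with $\Lambda_M(\{1\}) = 1$ and $\Lambda_M(\{2\}) = 0$).

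For the inductive step, pick an innermost arc $(i,j)$ of $M$ and let $M'$ be the standard dotted noncrossing matching on $n-2$ vertices (with underlying matching $a'$) obtained by removing $(i,j)$ and relabeling the remaining vertices. Standardness of $M'$ holds because removing an innermost arc does not alter the nesting status of any remaining arc: no dotted arc that was unnested below every other arc in $M$ becomes nested below a new arc in $M'$. By Lemma \ref{comp2}, $\gamma_a$ applies $g$ independently to each pair of coordinates corresponding to an arc of $a$, up to the permutation $\sigma_a$ of factors. Applying the K\"unneth isomorphism, we factor $(\gamma_a)_*(M)$ as $g_*(q)$ acting on the pair of coordinates at positions $i$ and $j$, tensored with $(\gamma_{a'})_*(M')$ acting on the remaining coordinates, where $q = [p]$ or $[c]$ according to whether $(i,j)$ is dotted or undotted. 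By the inductive hypothesis, $(\gamma_{a'})_*(M') = L_{M'}$. Combining with Lemma \ref{fgmap}, the two cases of $g_*(q)$ expand to precisely the two cases of Lemma \ref{addarc}: the dotted case $g_*([p]) = [p \times p]$ inserts dotted segments at both $i$ and $j$, while the undotted case $g_*([c]) = [p \times c] - [c \times p]$ contributes line diagrams with either $j$ or $i$ undotted, with signs $+$ and $-$ respectively. This yields $(\gamma_a)_*(M) = L_M$.

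The main obstacle is the bookkeeping of signs under the K\"unneth isomorphism and the permutation $\sigma_a$. The critical observation is that $H_*(S^2)$ is concentrated in degrees $0$ and $2$, both even, so permuting tensor factors introduces no Koszul signs and $\sigma_a$ acts freely on the expansion. Consequently, the only signs in the final sum come from Lemma \ref{fgmap}'s expression $g_*([c]) = [p \times c] - [c \times p]$, where the minus sign attaches exactly to the configuration in which the left endpoint of the undotted arc is itself undotted. Tensoring across all undotted arcs of $M$ then produces precisely the factor $(-1)^{\Lambda_M(U)}$ in the definition of $L_M$, so the combinatorial sign convention of $L_M$ matches the algebraic sign convention imposed by $g_*$.
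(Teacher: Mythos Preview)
Your proof is correct and follows essentially the same inductive approach as the paper: both reduce to the $n=2$ computation of Lemma~\ref{fgmap} and use Lemma~\ref{addarc} to handle the inserted/removed arc, with the K\"unneth factorization of $(\gamma_a)_*$ (via Lemma~\ref{comp2} in your version, ``by construction'' in the paper) carrying the inductive hypothesis across. Your version is slightly more careful than the paper's in two respects---you remove an innermost arc to preserve standardness of the smaller matching, and you explicitly note that no Koszul signs appear because $H_*(S^2)$ lives in even degrees---but these are refinements of the same argument rather than a different route.
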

 
 \begin{proof}
 Lemma \ref{fgmap} proves the case when $n=2$.
 
 Assume as the inductive hypothesis that if $M$ is any standard dotted noncrossing matching on $n$ vertices then $(\gamma_a)_*(M) = L_M$. Let $a'$ be a matching obtained from $a$ by inserting an arc $(i,j)$ with $1\leq i<j\leq n+2$.  By construction the map $(\gamma_{a'})_*$ is a permutation of $(\gamma_a)_* \otimes g_*$ so that $g_*$ is applied to coordinates $i$ and $j$ and $(\gamma_a)_*$ is applied to the other coordinates (in their original order). 
 
 Let $M$ be a dotting of $a$ and construct a dotted noncrossing matching $M'$ by adding the dotted arc $(i,j)$. From the $n=2$ case  we know  $g_*([p]) = [p\times p]$. The homology generator $[p] \in H_*(S^2)$ is denoted by a dotted vertical line segment so the image $(\gamma_{a'})_*(M')$ agrees with $(\gamma_a)_*(M)$ except that dotted segments are inserted into the $i^{th}$ and $j^{th}$ positions. This is exactly the change to the line diagram sum described in Lemma \ref{addarc} as $l_{\widetilde{U_{\emptyset}}}$.
 
Now let $M'$ be obtained from $M$ by inserting an undotted arc $(i,j)$.  We know $g_*([c]) = [p\times c] - [c\times p]$. Hence each term in the image of $M$ appears twice in the image of $M'$: one occurrence of  $M$ has an undotted $i^{th}$ segment and dotted $j^{th}$ segment with sign changed; 
the other has a dotted $i^{th}$ segment and undotted $j^{th}$ segment with no change in sign.
This is exactly the formula described in Lemma \ref{addarc}. 
 \end{proof}
 
 \begin{corollary}
 The map $\gamma_*: H_*(X_n) \rightarrow H_*((S^2)^n)$ is injective.
 \end{corollary}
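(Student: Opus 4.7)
The plan is to prove that the line diagram sums $L_M$ are linearly independent in $H_*((S^2)^n)$ as $M$ ranges over standard dotted noncrossing matchings. By Corollary \ref{finally} together with the preceding lemma we have $\gamma_*(M) = \pm L_M$, and standard dotted noncrossing matchings form a basis of $H_*(X_n)$, so linear independence of $\{L_M\}$ implies injectivity of $\gamma_*$. The strategy is a triangularity argument using a distinguished leading term in each $L_M$.

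For each standard dotted noncrossing matching $M$, define $U_M^R$ to be the undot set consisting of the right endpoint of every undotted arc of $M$. By the bijection $\varphi$ of Theorem \ref{bijection between tableaux and matchings}, $U_M^R$ is precisely the second row of the standard tableau $\varphi(M)$, so the assignment $M \mapsto U_M^R$ is injective on matchings with a fixed number of undotted arcs; moreover $\Lambda_M(U_M^R) = 0$, so $l_{U_M^R}$ appears in $L_M$ with coefficient $+1$. The key combinatorial lemma I would prove is that $U_M^R$ is the lexicographic maximum of $\mathcal{U}_M$. The proof amounts to the monotonicity statement that for every $m$,
\[|U \cap \{1, \ldots, m\}| \geq |U_M^R \cap \{1, \ldots, m\}|\]
for any $U \in \mathcal{U}_M$, with equality for all $m$ iff $U = U_M^R$. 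Indeed, replacing a right endpoint in $U_M^R$ by the corresponding left endpoint only pushes elements into earlier prefixes; partial-sum dominance then translates, after sorting, into lex dominance.

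Given the lemma, fix a value of $k$ and order the standard dotted noncrossing matchings with $k$ undotted arcs so that $U_M^R$ increases in lex. Form the square pairing matrix $P^{(k)}$ whose entry $P^{(k)}_{M,M'}$ is the coefficient of $l_{U_M^R}$ in $L_{M'}$. Diagonal entries are $+1$. An off-diagonal entry is nonzero only when $U_M^R \in \mathcal{U}_{M'}$, which by the lemma forces $U_M^R \leq_{\mathrm{lex}} U_{M'}^R$; with our ordering this means $P^{(k)}$ is upper-triangular with ones on the diagonal, hence invertible. This gives linear independence of the $L_M$ with $k$ undotted arcs, and since $L_M \in H_{2k}((S^2)^n)$, different values of $k$ automatically contribute vectors in different homological degrees. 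The main obstacle is verifying the lex-maximum lemma, but it follows from the simple monotonicity observation above.
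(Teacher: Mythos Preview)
Your proof is correct and is essentially the paper's own argument: both identify the right-endpoint undot set $U_M^R$ as the distinguished leading term of $L_M$ (with coefficient $+1$), use the bijection $\varphi$ to see these are distinct, and then run a triangularity argument. The only cosmetic difference is that you use the lexicographic order while the paper uses the colexicographic order (comparing largest elements first); both are linear extensions of the elementwise dominance you establish via prefix counts, so either suffices.
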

 
 \begin{proof}
Let $U_M$ be the undot set obtained from $M$ by choosing the right endpoint of each undotted arc.  Create a Young tableau from $U_M$ by putting the numbers from $U_M$ on the bottom row and all other numbers on the top row (with both rows in increasing order).  Theorem \ref{bijection between tableaux and matchings} showed that the standard noncrossing matchings $M$ with exactly $k$ undotted arcs are bijective with the tableaux associated to $M$, and hence so are the sets $U_M$.

Order subsets of $\{1,2,\ldots, n\}$ of cardinality $k$ according to the rule that $S= \{i_1 < i_2 < \ldots < i_k\}$ is less than $S'=\{i_1' < i_2' < \ldots < i_k'\}$ if for some $r$ both $i_r < i_{r}'$ and $i_{r+1}=i_{r+1}'$, $i_{r+2}=i_{r+2}'$, $\ldots$, $i_k = i_k'$.  Note that $U_M$ is the maximum set in $\mathcal{U}_M$ with respect to this order.

If the basis vectors $l_U \in H_*((S^2))^n)$ are ordered according to their undot sets then the map $M \mapsto L_M$ is represented by a matrix in row-echelon form; the leading term in each row is the coefficient of $l_{U_M}$, namely $\Lambda_M(U_M) = (-1)^0 = 1$.  Hence $\{L_M\}$ is linearly independent in $H_*((S^2)^n)$.  The standard dotted noncrossing matchings $M$ generate $H_*(X_n)$ so $\gamma_*$ is injective.
\end{proof}

Because homology generators for $H_*((S^2)^n)$ consist of a choice of point or two-cell for each coordinate, the $S_n$ action permutes the choices of point or two-cell in the same way it permutes coordinates. On the level of the line diagrams, this action permutes line segments. Figure \ref{permex} has an example.
\begin{figure}[h]
$$(1 2 3) \cdot 
\begin{picture}(26, 12) (0,0)
\put(5,-5){\line(0,1){20}}
\put(11, 5){\circle*{5}}
\put(11,-5){\line(0,1){20}}
\put(17,-5){\line(0,1){20}}
\put(17, 5){\circle*{5}}
\put(23,-5){\line(0,1){20}}
\put(23,5){\circle*{5}}
\end{picture}  = 
\begin{picture}(26, 12) (0,0)
\put(5,-5){\line(0,1){20}}
\put(5, 5){\circle*{5}}
\put(11,-5){\line(0,1){20}}
\put(17,-5){\line(0,1){20}}
\put(17, 5){\circle*{5}}
\put(23,-5){\line(0,1){20}}
\put(23,5){\circle*{5}}
\end{picture}$$
\caption{Permutation action in $H_*((S^2)^4)$.}\label{permex}
\end{figure}

Let $s_i$ be the simple transposition exchanging $i$ and $i+1$. Choose a homology generator $M\in H_*(S_a)$. Since $\gamma_*$ is injective and $\gamma_*(M) = \pm (\gamma_a)_*(M)$, we may identify the image $(\gamma_a)_*(M)$ with $M$.  We will show that $s_i (M) \in H_*(X_n)$, namely the $S_n$ action on $H_*((S^2)^n)$ restricts to $\gamma_*(H_*(X_n))$.  

\begin{theorem} \label{well-defined representation}
The chart in Figure \ref{snact} describes the action of simple transpositions on $\gamma_*(H_*(X_n))$.  The $S_n$ - action on $H_*(X_n)$ defined so that $\sigma \in S_n$ acts on a generator $M$ by
\[\sigma \cdot M := \sigma \cdot L_M\]
is a well-defined representation.
\end{theorem}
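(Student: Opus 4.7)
The plan is to compute the action of each simple transposition $s_i$ on the basis element $L_M$ directly, using the formula $L_M = \sum_{U \in \mathcal{U}_M}(-1)^{\Lambda_M(U)} l_U$ together with the fact that the coordinate-permutation action on $H_*((S^2)^n)$ sends the line diagram $l_U$ to $l_{s_i(U)}$, where $s_i(U)$ toggles membership of $i$ and $i+1$ in $U$. The preceding corollary shows $\gamma_*$ is injective, so the line diagram sums $L_M$ are linearly independent and the action extends uniquely to $\gamma_*(H_*(X_n))$. Well-definedness of the pulled-back action on $H_*(X_n)$ is then automatic: the coordinate-permutation action on $H_*((S^2)^n)$ is already a representation, and the restriction of a representation to an invariant subspace inherits the group-action axioms.

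To carry out the computation I would group the undot sets $U \in \mathcal{U}_M$ according to their intersection with $\{i, i+1\}$ and according to the local structure of the matching $M$ at these adjacent positions. Because $M$ is noncrossing, only four configurations are possible for how $i$ and $i+1$ sit in $M$: (A) $(i,i+1)$ is itself an arc of $M$; (B) $i$ and $i+1$ are the left endpoints of an arc and of an immediately nested arc; (C) $i$ is the right endpoint of one arc and $i+1$ is the left endpoint of a disjoint arc; (D) $i$ and $i+1$ are the right endpoints of an arc and of an immediately nesting arc. Each configuration further splits by whether the arcs involved are dotted or undotted, matching precisely the rows of Figure \ref{snact}.

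In Case (A), if $(i,i+1)$ is dotted then no $U \in \mathcal{U}_M$ contains either of $i, i+1$, so $s_i$ fixes every $l_U$ and $s_i \cdot L_M = L_M$. If $(i,i+1)$ is undotted then each $U$ contains exactly one of $i, i+1$; pairing $U$ with $s_i(U)$ yields two terms whose signs $(-1)^{\Lambda_M}$ differ by one (the swap exchanges a left-endpoint count for a right-endpoint count), so $s_i \cdot L_M = -L_M$. In Cases (B)--(D), I would split the sum according to the four possible values of $U \cap \{i,i+1\}$ and track how the contribution to $\Lambda_M$ changes under $s_i$; the reorganized sum is then recognized as a signed combination of line diagram sums $L_{M'}$ where $M'$ is obtained from $M$ by a local surgery on the two arcs incident to $i$ and $i+1$, reproducing the entry in Figure \ref{snact}.

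The main obstacle is the sign bookkeeping in Cases (B) and (D), where the local surgery may produce matchings that are not standard (for instance, a dotted arc nested below another arc). In those subcases one invokes the Type I and Type II relations from Definition \ref{relations} to rewrite $L_{M'}$ in the standard basis, and verifies that the resulting signed combination still agrees with the chart. Once the action is shown on every generator to lie in $\gamma_*(H_*(X_n))$, the subspace is $S_n$-stable, and the induced action $\sigma \cdot M := \sigma \cdot L_M$ on $H_*(X_n)$ is a well-defined representation.
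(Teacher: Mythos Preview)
Your approach is essentially the same as the paper's: split into cases according to the local configuration of arcs at $i,i+1$, partition $\mathcal{U}_M$ by $U\cap\{i,i+1\}$, track how $s_i$ permutes these pieces and shifts $\Lambda_M$, and recognize the result as $L_M$, $-L_M$, or $L_M+L_{M'}$. Two small remarks on how the paper streamlines this. First, the paper organizes the cases by dotting (both dotted; $(i,i+1)$ undotted; different arcs with one dot; different arcs with no dots) and handles your three geometric sub-configurations (B), (C), (D) simultaneously within Cases 3 and 4, since the sign bookkeeping turns out to be uniform across them. Second, and more importantly, your ``main obstacle'' is not actually there: the line diagram sum $L_{M'}$ is defined for \emph{any} dotted noncrossing matching, not only standard ones, and $L_{M'}=(\gamma_{a'})_*(M')\in\gamma_*(H_*(X_n))$ regardless. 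So once you identify $s_i\cdot L_M=L_M+L_{M'}$ you are done; there is no need to invoke the Type I/II relations to rewrite $M'$ in the standard basis, and the chart in Figure~\ref{snact} accordingly records the answer in terms of possibly non-standard matchings.
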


\begin{proof}
The action of $S_n$ on $(S^2)^n$ is consistent with the $S_n$-actions on sets and line diagrams given as follows.  If $U$ is an undot set then $\sigma \in S_n$ acts on $U$ by $\sigma \cdot U = \{ \sigma \cdot x : x\in U \}$ and $\sigma \cdot \mathcal{U}_M = \{ \sigma \cdot U \textup{ for } U\in \mathcal{U}_M\}$.  By definition $\sigma \cdot l_U = l_{\sigma \cdot U}$ and 
$$ \sigma \cdot L_M = \sum_{U \in \mathcal{U}_M} (-1)^{\Lambda_M(U)}l_{\sigma \cdot U}.$$ 
The simple transpositions $s_i$ generate the symmetric group; to prove that $S_n$ restricts to an action on $\gamma_*(H_*(X_n))$ it suffices to show that for each line diagram sum $L_M$ the image $s_i (L_M)$ is a linear combination of line diagram sums. We do this by explicitly calculating $s_i(L_M)$ in the four possible configurations of arcs and dottings: if $i$ and $i+1$ are both incident to dotted arcs in $M$ (whether or not $(i,i+1)$ is an arc); if $(i,i+1)$ is an undotted arc in $M$; if $i$ and $i+1$ are incident to different arcs in $M$, exactly one of which is dotted; and if $i$ and $i+1$ are incident to different arcs in $M$, neither of which is dotted.  Since the map $M \mapsto L_M$ is injective, the preimage of each $s_i(L_M)$ is unique and the $S_n$-action on $H_*(X_n)$ is a well-defined $S_n$-representation.

Figure \ref{snact} gives each case diagrammatically. As in Lemma \ref{nestcases}, the chart only shows arcs incident to vertices $i$ and $i+1$, since any other arcs are common to $M$ and $M'$.

\noindent {\bf Case 1}: Every arc in the matching $M$ incident to $i$ and $i+1$ is dotted.

This case encompasses both the case when $(i, i+1)$ is a dotted arc in $M$ and the case when $(j,i)$ and $(i+1,k)$ are both dotted arcs in $M$.  For all such $M$ no $U \in \mathcal{U}_M$ contains either $i$ or $i+1$.  Hence $s_i \cdot U = U$ for all $U \in \mathcal{U}_M$.  By definition $s_i \cdot L_M = L_M$ in this case.

\noindent {\bf Case 2}: The matching $M$ contains the undotted arc $(i,i+1)$.

If $(i, i+1)$ is an undotted arc then let $\mathcal{U}_i$ be the collection of all undot sets in $\mathcal{U}_M$ containing $i$ and let $\mathcal{U}_{i+1}$ be the undot sets containing $i+1$. By definition $\mathcal{U}_i$ and $\mathcal{U}_{i+1}$ are disjoint and $\mathcal{U}_M = \mathcal{U}_i \cup \mathcal{U}_{i+1}$.  The map $s_i: \mathcal{U}_i \rightarrow \mathcal{U}_{i+1}$ is a bijection with inverse $s_i$.  In particular $s_i \cdot \mathcal{U}_M = \mathcal{U}_M$. The vertex $i$ is a left endpoint and $i+1$ is a right endpoint so 
\[(-1)^{\Lambda_M(U)-1} = (-1)^{\Lambda_M(s_i\cdot U)}\]
if $U \in \mathcal{U}_M.$ 
It follows that 
\[ \begin{array}{rcl} s_i\cdot L_M & = & \sum_{U\in \mathcal{U}_i} (-1)^{\Lambda_M(U)}l_{s_i\cdot U} + \sum_{U\in \mathcal{U}_{i+1}} (-1)^{\Lambda_M(U)}l_{s_i\cdot U} \\
& = &  \sum_{U\in \mathcal{U}_i} (-1)^{\Lambda_M(U)-1}l_{U} + \sum_{U\in \mathcal{U}_{i+1}} (-1)^{\Lambda_M(U)-1}l_{U} = -L_M.
\end{array}\]

\noindent {\bf Note}: Both Case 3 and Case 4 assume the standard matching $M$ contains arcs $(j,i)$ and $(i+1,k)$ with no additional hypotheses on the positions of $j$ and $k$. All three possible relative positions of the arcs $(j,i)$ and $(i+1,k)$, shown in Figure \ref{nestcases}, are consistent with our calculations.
\begin{figure}[h]
\begin{picture} (100, 38)(0,-10)
\put(-160,10){\includegraphics[width=1.2in]{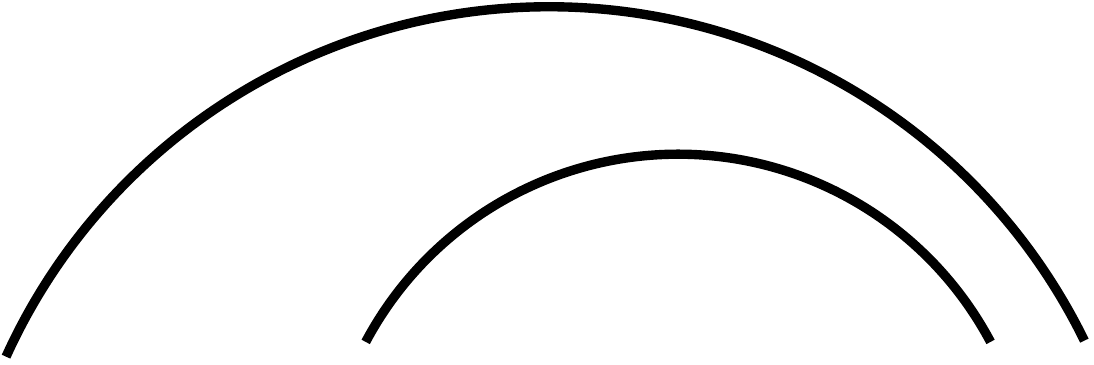}}
\put(10,10){\includegraphics[width=1.3in]{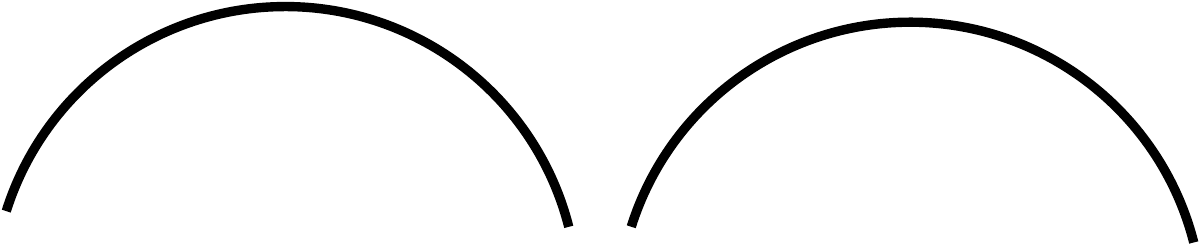}}
\put(200,10){\includegraphics[width=1.2in]{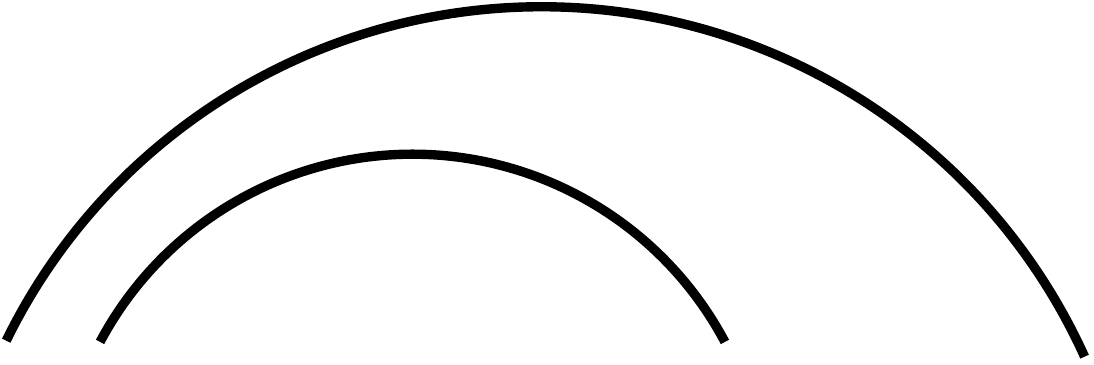}}

\put(-165,4){\tiny{$k$}}
\put(-135,4){\tiny{$j$}}
\put(-84,4){\tiny{$i$}}
\put(-75,4){\tiny{$i+1$}}

\put(12,4){\tiny{$j$}}
\put(52,4){\tiny{$i$}}
\put(60,4){\tiny{$i+1$}}
\put(100,4){\tiny{$k$}}

\put(198,4){\tiny{$i$}}
\put(205,4){\tiny{$i+1$}}
\put(255,4){\tiny{$k$}}
\put(285,4){\tiny{$j$}}

\put(-135,-10){Case a}
\put(35,-10){Case b}
\put(225,-10){Case c}
\end{picture}
\caption{Arcs incident on vertices $i$ and $i+1$.  (We allow additional arcs whose endpoints are consistent with those in the diagram.)} \label{nestcases}
\end{figure}

\noindent {\bf Case 3}: The standard matching $M$ contains the arcs $(j,i)$ and $(i+1,k)$, exactly one of which is dotted. (There are no additional hypotheses on $j$ and $k$.)

Assume arc $(j, i)$ is undotted and arc $(i+1,k)$ is dotted; the case when $(j,i)$ is dotted is symmetric.  Let $\mathcal{U}_j$ be the undot sets in $\mathcal{U}_M$ containing $j$ and let $\mathcal{U}_{i}$ be the undot sets containing $i$. As in Case 1 there is a disjoint union $\mathcal{U}_M = \mathcal{U}_j \cup \mathcal{U}_i$. Let $M'$ be the standard matching with dotted arc $(j,k)$ and undotted arc $(i,i+1)$ that is otherwise identical to $M$. Then $\mathcal{U}_{M'} = \mathcal{U'}_i \cup \mathcal{U'}_{i+1}$. 

Matchings $M$ and $M'$ each have an undotted arc incident to vertex $i$; the other undotted arcs in $M$ are exactly the same as those in $M'$. Thus $\mathcal{U}_i = \mathcal{U'}_i$.  The matching $M$ is standard so $j < i$.  For each $U \in \mathcal{U}_i$  we have $\Lambda_M(U) = \Lambda_{M'}(U)-1$ since $i$ is a right endpoint in $M$ and a left endpoint in $M'$. The maps $s_i: \mathcal{U}_j \rightarrow \mathcal{U}_j$ and $s_i: \mathcal{U}_i \rightarrow \mathcal{U'}_{i+1}$ are bijections. For each $U\in \mathcal{U}_i$ we have $\Lambda_M(U) = \Lambda_{M'}(s_i \cdot U)$ since $i$ is a right endpoint in $M$ and $i+1$ is a right endpoint in $M'$. Together this gives
\[\begin{array}{rcl}
s_i\cdot L_M &=& \displaystyle{\sum_{U\in \mathcal{U}_j} (-1)^{\Lambda_M(U)}l_{s_i\cdot U} + \sum_{U\in \mathcal{U}_{i}} (-1)^{\Lambda_M(U)}l_{s_i\cdot U}} \\ 
&=& \displaystyle{ \sum_{U\in \mathcal{U}_j} (-1)^{\Lambda_M(U)}l_{U} + \sum_{U\in \mathcal{U'}_{i+1}} (-1)^{\Lambda_{M'}(U)}l_{U} }\\ 
&=& \displaystyle{\sum_{U\in \mathcal{U}_j} (-1)^{\Lambda_M(U)}l_{U} + \sum_{U\in \mathcal{U'}_{i+1}} (-1)^{\Lambda_{M'}(U)}l_{U} +  \sum_{U\in \mathcal{U}_{i}} (-1)^{\Lambda_M(U)}l_{ U} - \sum_{U\in \mathcal{U}_{i}} (-1)^{\Lambda_M(U)}l_{ U} } \\ 
&=& \displaystyle{\sum_{U\in \mathcal{U}_j} (-1)^{\Lambda_M(U)}l_{U} + \sum_{U\in \mathcal{U}_{i}} (-1)^{\Lambda_M(U)}l_{U} + \sum_{U\in \mathcal{U'}_{i}} (-1)^{\Lambda_{M'}(U)}l_{ U} + \sum_{U\in \mathcal{U'}_{i+1}} (-1)^{\Lambda_{M'}(U)}l_{U}} \\ &=& L_M + L_{M'}. \end{array} \]

\noindent {\bf Case 4}: The standard matching $M$ contains the undotted arcs $(j,i)$ and $(i+1,k)$. (There are no additional hypotheses on $j$ and $k$.)

Suppose both $(j,i)$ and $(i+1,k)$ are undotted.  For each subset $S \subseteq \{j,i,i+1,k\}$ let $\mathcal{U}_S$ denote the collection of $U \in \mathcal{U}$ with $S \subseteq U$.  There is a disjoint union 
\[\mathcal{U} _M = \mathcal{U}_{j,i+1} \cup \mathcal{U}_{j,k} \cup \mathcal{U}_{i,i+1} \cup \mathcal{U}_{i,k}.\] 
Let $M'$ be the matching with undotted arcs $(j,k)$ and $(i,i+1)$ that is otherwise identical to $M$. Then $\mathcal{U}_{M'} = \mathcal{U'}_{j,i} \cup \mathcal{U'}_{j,i+1} \cup \mathcal{U'}_{i,k} \cup \mathcal{U'}_{k,i+1}$. The sets  $\mathcal{U}_{j,i+1} = \mathcal{U'}_{j,i+1}$ and $\mathcal{U}_{i,k} = \mathcal{U'}_{i,k}$ since $M$ and $M'$ agree off of vertices $j,k,i$, and $i+1$.  The maps $s_i: \mathcal{U}_{i,k} \rightarrow \mathcal{U'}_{k,i+1}$ and $s_i: \mathcal{U}_{j,i} \rightarrow \mathcal{U'}_{j,i+1}$ are bijections while 
$s_i: \mathcal{U}_{j,k} \rightarrow \mathcal{U}_{j,k}$ and $s_i: \mathcal{U}_{i,i+1} \rightarrow \mathcal{U}_{i,i+1}$ are identity maps.

In all cases shown in Figure \ref{nestcases}, for each $U\in \mathcal{U}_{j,i+1}$ we have $\Lambda_M(U) = \Lambda_{M'}(s_i\cdot U)$ and for each $U\in \mathcal{U}_{i,k}$ we have $\Lambda_M(U) = \Lambda_{M'}(s_i\cdot U)$.  If $U \in \mathcal{U}_{i,k}$ or $U \in \mathcal{U}_{j,i+1}$ then $s_i$ changes exactly one endpoint in $U$ from right to left or vice versa, namely $\Lambda_M(U) = \Lambda_{M'}(U) \pm 1$.  This gives 
\[\begin{array}{rcl}
s_i\cdot L_M &=& \displaystyle{\sum_{U\in \mathcal{U}_{j,i+1}} (-1)^{\Lambda_M(U)}l_{s_i\cdot U} + \sum_{U\in \mathcal{U}_{j,k}} (-1)^{\Lambda_M(U)}l_{s_i \cdot U} + \sum_{U\in \mathcal{U}_{i,i+1}} (-1)^{\Lambda_M(U)}l_{s_i\cdot U} + \sum_{U\in \mathcal{U}_{i, k}} (-1)^{\Lambda_M(U)}l_{s_i \cdot U}} \\ 
&=& \displaystyle{ \sum_{U\in \mathcal{U'}_{j,i}} (-1)^{\Lambda_{M'}(U)}l_{U} + \sum_{U\in \mathcal{U}_{j,k}} (-1)^{\Lambda_M(U)}l_{U} + \sum_{U\in \mathcal{U}_{i, i+1}} (-1)^{\Lambda_M(U)}l_{U} + \sum_{U\in \mathcal{U'}_{i+1,k}} (-1)^{\Lambda_{M'}(U)}l_{U} } \\ 
&& \hspace{.25in}  \displaystyle{+ \sum_{U\in \mathcal{U}_{j, i+1}} (-1)^{\Lambda_M(U)}l_{U} - \sum_{U\in \mathcal{U}_{j, i+1}} (-1)^{\Lambda_M(U)}l_{U} + \sum_{U\in \mathcal{U}_{i,k}} (-1)^{\Lambda_M(U)}l_{U} -  \sum_{U\in \mathcal{U}_{i,k}} (-1)^{\Lambda_M(U)}l_{U}} \\ 
&=& L_M + L_{M'}.\end{array}\]
\end{proof}

\begin{figure}[h]
\begin{tabular}{|c||c|c|}
\cline{1-3} & &   \hspace{1em} \\
 {\bf Vertex Labelings} & $M$ & $s_i\cdot M$  \\
& &   \hspace{1em} \\
 \cline{1-3} & &  \hspace{1em} \\
 {\bf Case 1} &\includegraphics[width=20pt]{x2dot.pdf}& \includegraphics[width=20pt]{x2dot.pdf}\\
 \cline{2-3} & & \hspace{1em} \\
 \raisebox{7pt}{\Small{$i,i+1$ both on dotted arcs}}&  \includegraphics[width=.6in]{unnest3.pdf} &  \includegraphics[width=.6in]{unnest3.pdf} \\
 \cline{1-3} & & \hspace{1em}\\
 {\bf  Case 2} & \includegraphics[width=.3in]{x2.pdf} &  $-$ \includegraphics[width=.3in]{x2.pdf} \\
 \raisebox{7pt}{\Small{$(i,i+1)$ is an undotted arc}} & & \\
 \cline{1-3} & & \hspace{1em}\\
  {\bf  Case 3}  &\includegraphics[width=.6in]{unnest1.pdf} & $\includegraphics[width=.6in]{unnest1.pdf} + \includegraphics[width=.6in]{nest2.pdf}$\\
\Small{$(j,i)$ and $(i+1,k)$ have one dot} & & \\
\cline{2-3} & & \hspace{1em}\\
 \Small{(the dotted arc is on right or left} & & \\
 \Small{depending on the sign of i-j)}&   \includegraphics[width=.6in]{nest2.pdf} & $\includegraphics[width=.6in]{nest2.pdf} + \includegraphics[width=.6in]{unnest1.pdf}$\\
 \cline{1-3} & & \hspace{1em} \\
 {\bf  Case 4} & \includegraphics[width=.6in]{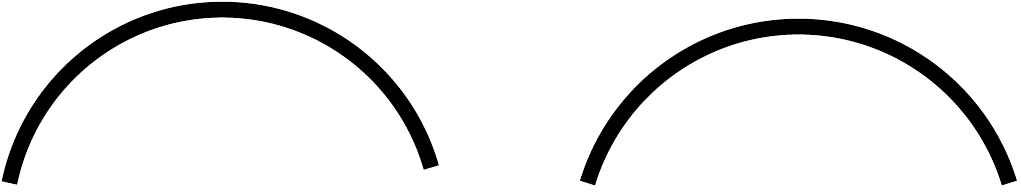}& $\includegraphics[width=.6in]{revunnest0.pdf} + \includegraphics[width=.6in]{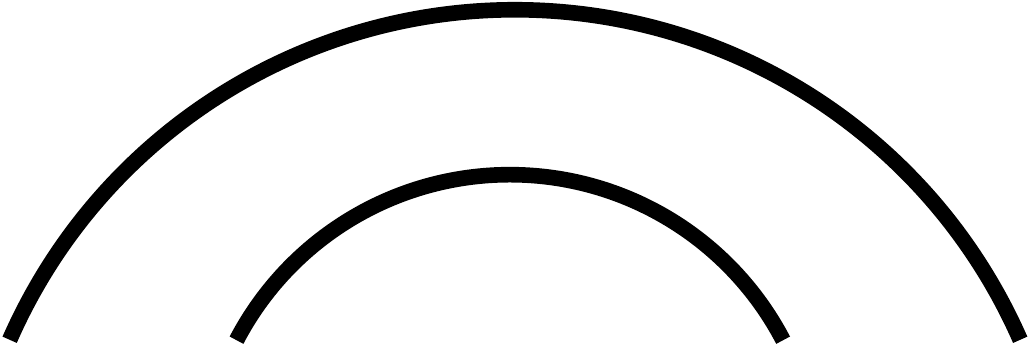}$ \\
 \cline{2-3} & & \hspace{1em}\\
\raisebox{7pt}{\Small{$(j,i)$ and $(i+1,k)$ have no dots}} & \includegraphics[width=.6in]{nest.pdf} &$\includegraphics[width=.6in]{nest.pdf} + \includegraphics[width=.6in]{revunnest0.pdf}$ \\

 \hline
 \end{tabular}
\caption{The $S_n$-action on standard dotted noncrossing matchings.}\label{snact}
\end{figure}

\section{The $S_n$ representation on $H_k(X_n)$}

This section explicitly identifies the $S_n$ representation on the homology of the Springer fiber in each degree.  First we review Specht modules, a classical construction of irreducible representations of $S_n$.  Then we give a direct proof that the $\C$-module $\gamma_*(H_k(X_n))$ with the $S_n$-action from the antipodal embedding is the Specht module of the partition $(n-k,k)$.

\subsection{Specht modules for partitions $(n-k,k)$}\label{classical repn theory}

We sketch the classical construction of Specht modules both in general and for two-row partitions; \cite[Chapter 7]{Fulton} has more.

Let $\lambda$ be a partition of $n$ considered as a Young diagram (a collection of left- and top-aligned boxes so that the $i^{th}$ row has $\lambda_i$ boxes for each $i$).  A Young tableau is a filling of the Young diagram $\lambda$ with the numbers $1,2,\ldots, n$ without repetition.  A {\em tabloid} $T$ is an equivalence class of Young tableaux, where the equivalence relation is reordering of the numbers in each row.  The permutation group $S_n$ acts on the collection of tabloids; the action of $w \in S_n$ on the tabloid $T$ replaces $i$ with $w(i)$ for each $i = 1, \ldots, n$.  For each {\em tableau} $T$ let $\textup{Col}(T)$ denote the subgroup of permutations in $S_n$ that stabilize each column of $T$.

Consider the complex vector space with one basis element $v_T$ for each tabloid $T$ of shape $\lambda$.  The Specht module $V_{\lambda}$ is the subspace spanned by the vectors 
\[e_T = \sum_{w \in \textup{Col}(T)} \textup{sign}(w) v_{w(T)}\]
for each {\em tableau} $T$ of shape $\lambda$.  Figure \ref{Specht generators} gives an example.  A classical theorem states that $V_{\lambda}$ is the unique (up to isomorphism) irreducible representation of $S_n$ corresponding to $\lambda$.  A small exercise shows that $V_{\lambda}$ has a basis $\{e_T\}$ indexed by {\em standard} Young tableaux, namely tableaux whose rows increase left-to-right and whose columns increase top-to-bottom.

The construction of the Specht module is simpler when the partition has two rows, as in Figure \ref{Specht generators}.  The subgroup $\textup{Col}(T)$ for a two-row standard Young tableau $T$ is a product of transpositions, one for each column of height two.  The sign of the permutation $w \in \textup{Col}(T)$ is determined by the parity of the number of columns in which $w$ exchanges entries.  

\begin{figure}[h]
\begin{tabular}{|c|c|}
\cline{1-2} & \hspace{1em} \\
Tableau $T$ & Basis element $e_T$ \\
 & \hspace{1em} \\
\cline{1-2} & \hspace{1em} \\
$\begin{array}{|c|c|} \cline{1-2} 1 & 3 \\ \cline{1-2} 2 & 4 \\ \hline \end{array}$ & 
$\begin{array}{|c|c|} \cline{1-2} 1 & 3 \\ \cline{1-2} 2 & 4 \\ \hline \end{array} - \begin{array}{|c|c|} \cline{1-2} 2 & 3 \\ \cline{1-2} 1 & 4 \\ \hline \end{array} - \begin{array}{|c|c|} \cline{1-2} 1 & 4 \\ \cline{1-2} 2 & 3 \\ \hline \end{array} + \begin{array}{|c|c|} \cline{1-2} 2 & 4 \\ \cline{1-2} 1 & 3 \\ \hline \end{array}$ \\
 & \hspace{1em} \\
\cline{1-2} & \hspace{1em} \\
$\begin{array}{|c|c|} \cline{1-2} 1 & 2 \\ \cline{1-2} 3 & 4 \\ \hline \end{array}$ & 
$\begin{array}{|c|c|} \cline{1-2} 1 & 2 \\ \cline{1-2} 3 & 4 \\ \hline \end{array} - \begin{array}{|c|c|} \cline{1-2} 2 & 3 \\ \cline{1-2} 1 & 4 \\ \hline \end{array} - \begin{array}{|c|c|} \cline{1-2} 1 & 4 \\ \cline{1-2} 2 & 3 \\ \hline \end{array} + \begin{array}{|c|c|} \cline{1-2} 3 & 4 \\ \cline{1-2} 1 & 2 \\ \hline \end{array}$ \\
 & \hspace{1em} \\
\hline \end{tabular}
\caption{Generators of the Specht module for the partition $(2,2)$}\label{Specht generators}
\end{figure}

\subsection{The $S_n$ action on the homology of $X_n$}

The standard noncrossing matchings naturally define another subspace $W_{\lambda}$ of the complex vector space spanned by 
\[\{v_T: T \textup{ is a tabloid of shape } (n-k,k)\}.\]  
We define a set of generators for $W_{\lambda}$ and then show that in fact $V_{\lambda} = W_{\lambda}$.

\begin{definition}
Let $M$ be a standard noncrossing matching with exactly $k$ undotted arcs and let $T = \varphi(M)$.  Define the subgroup $\textup{Undot}(M) \subseteq S_n$ to be the collection of permutations generated by the transpositions $(ij)$ for each undotted arc $(i,j)$.  The matching module $W_{\lambda}$ is the subspace spanned by the vectors
\[e_M = \sum_{w \in \textup{Undot}(M)} \textup{sign}(w) v_{w(T)}.\]
\end{definition}

Figure \ref{matching generators} gives these generators for the standard noncrossing undotted matchings when $n=4$.  Note that the second generator in Figure \ref{matching generators} is not the same as in Figure \ref{Specht generators}.
\begin{figure}[h]
\begin{tabular}{|c|c|c|}
\cline{1-3}& & \hspace{1em} \\
Matching $M$ & Tableau $\varphi(M)$ & Basis element $e_M$ \\
&  & \hspace{1em} \\
 \cline{1-3} &  & \hspace{1em} \\
\raisebox{-2pt}{\includegraphics[width = .8in]{revunnest0.pdf}} & $\begin{array}{|c|c|} \cline{1-2} 1 & 3 \\ \cline{1-2} 2 & 4 \\ \hline \end{array}$ & 
$\begin{array}{|c|c|} \cline{1-2} 1 & 3 \\ \cline{1-2} 2 & 4 \\ \hline \end{array} - \begin{array}{|c|c|} \cline{1-2} 2 & 3 \\ \cline{1-2} 1 & 4 \\ \hline \end{array} - \begin{array}{|c|c|} \cline{1-2} 1 & 4 \\ \cline{1-2} 2 & 3 \\ \hline \end{array} + \begin{array}{|c|c|} \cline{1-2} 2 & 4 \\ \cline{1-2} 1 & 3 \\ \hline \end{array}$ \\
& & \hspace{1em} \\
\cline{1-3} && \hspace{1em} \\
\raisebox{-5pt}{\includegraphics[width=.8in]{nest.pdf}} &$\begin{array}{|c|c|} \cline{1-2} 1 & 2 \\ \cline{1-2} 3 & 4 \\ \hline \end{array}$ & 
$\begin{array}{|c|c|} \cline{1-2} 1 & 2 \\ \cline{1-2} 3 & 4 \\ \hline \end{array} - \begin{array}{|c|c|} \cline{1-2} 2 & 4 \\ \cline{1-2} 1 & 3 \\ \hline \end{array} - \begin{array}{|c|c|} \cline{1-2} 1 & 3 \\ \cline{1-2} 2 & 4 \\ \hline \end{array} + \begin{array}{|c|c|} \cline{1-2} 3 & 4 \\ \cline{1-2} 1 & 2 \\ \hline \end{array}$ \\
& & \hspace{1em} \\
\hline \end{tabular}
\caption{Generators of the matching module for the partition $(2,2)$}\label{matching generators}
\end{figure}

\begin{lemma}
The map $M \mapsto e_M$ induces an $S_n$-equivariant isomorphism of complex vector spaces $H_{2k}(X_n) \cong W_{(n-k,k)}$.
\end{lemma}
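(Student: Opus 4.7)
My plan is to establish the lemma by proving two separate statements: first that $M\mapsto e_M$ is a linear isomorphism from $H_{2k}(X_n)$ onto $W_{(n-k,k)}$, and then that this map intertwines the $S_n$-action from Theorem~\ref{well-defined representation} with the natural tabloid action on $W_{(n-k,k)}$. Since the standard noncrossing matchings with $k$ undotted arcs form a basis of $H_{2k}(X_n)$ and $W_{(n-k,k)}$ is by definition the span of $\{e_M\}$ over such matchings, surjectivity is automatic, so the content of the isomorphism claim is linear independence.

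For linear independence I will use a triangularity argument. A tabloid of shape $(n-k,k)$ is determined by its bottom row, a $k$-subset of $\{1,\ldots,n\}$; order these subsets in reverse-lex order (comparing the largest element first). If the undotted arcs of $M$ are $(l_1,r_1),\ldots,(l_k,r_k)$, then $\textup{Undot}(M)$ consists of products $w_S=\prod_{t\in S}(l_t,r_t)$ for $S\subseteq\{1,\ldots,k\}$, and the tabloid $w_S\cdot\varphi(M)$ has bottom row $\{r_t:t\notin S\}\cup\{l_t:t\in S\}$. Because each $l_t<r_t$, any nonempty $S$ yields a bottom row strictly smaller than $\{r_1,\ldots,r_k\}$ in reverse-lex order; hence the $2^k$ tabloids appearing in $e_M$ are distinct and $v_{\varphi(M)}$ is their unique reverse-lex maximum, with coefficient $+1$. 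Theorem~\ref{bijection between tableaux and matchings} ensures that distinct standard $M$ produce distinct leading tabloids $\varphi(M)$, so a standard triangularity argument proves $\{e_M\}$ is linearly independent.

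For equivariance I will verify each of the four cases in Figure~\ref{snact}. In Case~1 neither $i$ nor $i+1$ is the endpoint of any undotted arc, so each tabloid appearing in $e_M$ has $i$ and $i+1$ in its top row; then the tabloid action of $s_i$ fixes every such tabloid, yielding $s_i\cdot e_M=e_M$. In Case~2 the transposition $s_i$ lies in $\textup{Undot}(M)$, so left-multiplication by $s_i$ is a sign-reversing bijection on this subgroup, giving $s_i\cdot e_M=-e_M$. In Cases~3 and~4 I must show $s_i\cdot e_M=e_M+e_{M'}$, where $M'$ is the partner matching listed in the figure. The key observation is that $\textup{Undot}(M)$ and $\textup{Undot}(M')$ share a common subgroup $H$ generated by the undotted arcs disjoint from $\{j,i,i+1,k\}$ and differ only in the small factor generated by the arcs at these four vertices; after factoring out the common $H$-summand (which $s_i$ centralizes since its generators involve neither $i$ nor $i+1$) the verification reduces to a finite check on the tabloids produced by this small factor.

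The main obstacle will be the bookkeeping in Cases~3 and~4. The identity $s_i\cdot e_M=e_M+e_{M'}$ does not hold at the level of group-algebra elements: when $s_i$ is applied to the tabloids in $e_M$, it permutes some among themselves and moves others to new tabloids, and the result matches $e_M+e_{M'}$ only after nontrivial cancellation between tabloids shared by $e_M$ and $e_{M'}$ with opposite signs. Verifying this explicitly requires tracking how $s_i$ exchanges $i$ and $i+1$ between the top and bottom rows of each tabloid appearing in $e_M$ and $e_{M'}$; this is a direct but delicate combinatorial argument, which I will carry out case by case by enumerating these tabloids.
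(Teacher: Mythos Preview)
Your proposal is correct, but it takes a different and longer route than the paper's proof.

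The paper observes that there is an obvious $S_n$-equivariant linear isomorphism $\psi$ from $H_{2k}((S^2)^n)$ to the space of tabloids of shape $(n-k,k)$: send a line diagram $l$ to the tabloid whose bottom row is the undot set of $l$. Both sides carry the permutation action on positions, so equivariance of $\psi$ is immediate. The paper then shows, by a short induction on the number of undotted arcs, that $\psi(L_M)=e_M$ for every standard dotted noncrossing matching $M$. Since $M\mapsto L_M$ is injective (already proven) and $S_n$-equivariant by definition (Theorem~\ref{well-defined representation}), and $\psi$ is an $S_n$-equivariant isomorphism, the composite $M\mapsto e_M$ is automatically an $S_n$-equivariant isomorphism onto its image $W_{(n-k,k)}$. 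No case analysis is repeated.

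Your approach instead re-establishes both ingredients from scratch on the tabloid side: the triangularity argument for linear independence of $\{e_M\}$ is essentially the same argument the paper already gave for injectivity of $\gamma_*$ (same reverse-lex order on undot sets, transported to bottom rows), and your case-by-case verification of $s_i\cdot e_M$ duplicates the computations of Theorem~\ref{well-defined representation} in tabloid language. This is valid and self-contained, but it misses the shortcut: once you notice that line diagrams and tabloids are the same combinatorial objects under $\psi$, the equivariance and injectivity transfer for free, and the only new content is the identification $\psi(L_M)=e_M$. The paper's route is therefore considerably shorter; yours has the modest advantage of not depending on the line-diagram formalism at all.
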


\begin{proof}
We first define an $S_n$-equivariant $\mathbb{C}$-linear isomorphism $\psi$ that sends line diagrams to tabloids. We then show $M \mapsto e_M$ factors $S_n$-equivariantly through $\psi$.  

Let $l$ be the line diagram for a basis element of $H_*((S^2)^n)$.  Assign the line diagram $l$ to the tabloid $\psi(l)$ which has $i$ on the top row if the $i^{th}$ line is dotted and on the second row otherwise.  Line diagrams are in bijection with their undot sets, and $k$-element undot sets are in bijection with second rows of tabloids for the partition $(n-k,k)$. Thus $\psi$ is a bijection between the basis for $H_{2k}((S^2)^n)$ and the basis for the space of tabloids for the partition $(n-k,k)$. 

Extending by $\C$-linearity gives a map from $H_{2k}((S^2)^n)$ to the vector space of tabloids for the partition $(n-k,k).$ The permutation $w$ sends the $i^{th}$ strand of $l$ to the $w(i)^{th}$ position.  Hence $\psi(w(l))$ has $w(i)$ on the top row if and only if the $i$ is on the top row in $\psi(l)$ so the map $\psi$ is $S_n$-equivariant.

We show that if $L_M$ is the line diagram for $M$ then $\psi(L_M) = e_M$.  If $M$ has a dotted arc $(i,j)$ then both $i$ and $j$ are dotted lines in every term of $L_M$, and both $i$ and $j$ are on the top row of every tabloid $w(\varphi(M))$ for $w \in \textup{Undot}(M)$.  

If $M$ has no undotted arcs, the line diagram sum $L_M$ has a single term with all $n$ lines dotted. The group $\textup{Undot}(M)$ is trivial so $e_M$ is the standard tableaux with a single row of $n$ boxes. Therefore $\psi(L_M) = e_M$ and the base case is proven. 

Assume as the inductive hypothesis that $\psi(L_M)=e_M$ if $M$ has at most $k-1$ undotted arcs.  Suppose $M'$ is obtained by inserting an undotted arc $(i,j)$ into $M$, renumbering vertices as needed.   The line diagram $L_{M'}$ has two terms for each term $l$ of $L_M$.  In the first term, the $i^{th}$ strand is undotted, the $j^{th}$ strand is dotted, and the sign differs with that of $l$. In the second term, the $i^{th}$ strand is dotted, the $j^{th}$ strand is undotted, and the sign agrees with that of $l$.   The tableau $\varphi(M')$ agrees with $\varphi(M)$ except that $i$ is inserted into the top row, $j$ into the bottom row, and all other entries are renumbered accordingly.  

Given $w\in \textup{Undot}(M)$, let $w'\in \textup{Undot}(M')$ be the permutation obtained by renumbering to allow for the insertion of the arc $(i,j)$. Then $e_{M'}$ has two terms for each term $v_{w(\varphi(M))}$ of $e_{M}$, namely $\textup{sign}(w') v_{w'(\varphi(M'))}$ and $\textup{sign}(s_{ij}w') v_{s_{ij}w'(\varphi(M'))}$.  These are exactly the images under $\psi$ of the new terms in the line diagram.  
By induction on the number of undotted arcs we conclude $\psi(L_M) = e_M$ for each standard noncrossing matching $M$.

We conclude that $\psi$ restricts to an isomorphism of $\gamma_*(H_{2k}(X_n))$ with $W_{(n-k,k)}$. The map $M\mapsto L_M$ is injective so the map $M\mapsto e_M$ is an isomorphism of $H_{2k}(X_n)$ with $W_{(n-k,k)}$. The $S_n$-action on matchings is defined so that the map $M \mapsto L_M$  is $S_n$-equivariant; since $\psi$ is $S_n$-equivariant, the map $M\mapsto e_M$ is an $S_n$- equivariant isomorphism.  
\end{proof}

\begin{theorem} \label{springer repn}
Let $V_{\lambda}$ denote the Specht module corresponding to $\lambda$.  Let $W_{\lambda}$ denote the module defined by the standard noncrossing matchings.  Then $V_{\lambda}$ equals $W_{\lambda}$ as $S_n$-representations.  In particular the $S_n$ action on $H_*(X_n)$ is the Springer representation.
\end{theorem}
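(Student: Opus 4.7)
The plan is to show $W_\lambda = V_\lambda$ literally as subspaces of the tabloid vector space for $\lambda = (n-k,k)$. The preceding lemma already provides an $S_n$-equivariant isomorphism $H_{2k}(X_n) \cong W_{(n-k,k)}$ via $M \mapsto e_M$, so once this equality is established, our $S_n$-action on $H_{2k}(X_n)$ will be recognizable as the Specht module, which by Proposition \ref{GP identification of Springer rep} is precisely the Springer representation.

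First I would match dimensions. Theorem \ref{bijection between tableaux and matchings} puts standard noncrossing matchings with $k$ undotted arcs in bijection with standard tableaux of shape $(n-k,k)$, and the preceding lemma establishes that $M \mapsto e_M$ is injective. Consequently $\dim W_\lambda$ equals the number of standard tableaux of shape $(n-k,k)$, which is exactly $\dim V_\lambda$.

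Next I would exhibit a single matching $M_0$ for which $e_{M_0}$ lies in $V_\lambda$. Take $M_0$ to be the unnested standard matching with dotted arcs $(1,2),(3,4),\ldots,(n-2k-1,n-2k)$ and undotted arcs $(n-2k+1,n-2k+2),\ldots,(n-1,n)$. Applying $\varphi$ produces a standard tableau $T_0$ whose top row is $1,2,\ldots,n-2k,\,n-2k+1,n-2k+3,\ldots,n-1$ and whose bottom row is $n-2k+2,n-2k+4,\ldots,n$, so each two-entry column of $T_0$ is exactly one of the undotted arcs of $M_0$. Therefore $\mathrm{Col}(T_0) = \mathrm{Undot}(M_0)$ and $e_{M_0} = e_{T_0} \in V_\lambda$.

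Finally I would close the argument by irreducibility. The element $e_{M_0}$ is nonzero in the irreducible $S_n$-module $V_\lambda$, so the cyclic submodule it generates is all of $V_\lambda$; on the other hand $W_\lambda$ is $S_n$-invariant (inherited from $H_{2k}(X_n)$ through the $S_n$-equivariant isomorphism of the preceding lemma) and also contains $e_{M_0}$. Hence $V_\lambda \subseteq W_\lambda$, and equality follows from the dimension count. The delicate point is the choice of $M_0$: for a generic standard matching $M$ the subgroups $\mathrm{Col}(\varphi(M))$ and $\mathrm{Undot}(M)$ differ (compare the $n=4$ pair of generators in Figures \ref{Specht generators} and \ref{matching generators}), so $e_M$ and $e_{\varphi(M)}$ are genuinely different vectors and one cannot hope for the identification $W_\lambda = V_\lambda$ to hold on the generating sets; the argument hinges on locating at least one matching where the two symmetrizers coincide, which is enough to ignite the irreducibility argument.
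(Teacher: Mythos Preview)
Your overall strategy matches the paper's: both argue by (i) a dimension count, (ii) exhibiting a single matching $M_0$ with $e_{M_0}=e_{T_0}\in V_\lambda$, and (iii) irreducibility of $V_\lambda$ to force $V_\lambda\subseteq W_\lambda$ and hence equality. (The paper phrases step (iii) via the intersection $V_\lambda\cap W_\lambda$, but that is the same argument.)

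However, your specific choice of $M_0$ does not work. With the dotted arcs on the left and the undotted arcs on the right, the tableau $T_0=\varphi(M_0)$ has top row $1,2,\ldots,n-2k,\,n-2k+1,n-2k+3,\ldots,n-1$ and bottom row $n-2k+2,n-2k+4,\ldots,n$, exactly as you wrote. But then the $j$th two-entry column of $T_0$ is $(j,\,n-2k+2j)$, which is \emph{not} the undotted arc $(n-2k+2j-1,\,n-2k+2j)$ unless $n=2k$. Concretely, for $n=6$ and $k=1$ your $M_0$ has the single undotted arc $(5,6)$, while $T_0$ has the single two-entry column $(1,6)$; so $\mathrm{Col}(T_0)=\langle(1\ 6)\rangle\neq\langle(5\ 6)\rangle=\mathrm{Undot}(M_0)$ and $e_{M_0}\neq e_{T_0}$.

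The fix is to reverse the placement: take $M_0$ with \emph{undotted} arcs $(1,2),(3,4),\ldots,(2k-1,2k)$ on the left and \emph{dotted} arcs $(2k+1,2k+2),\ldots,(n-1,n)$ on the right. Then $T_0=\varphi(M_0)$ has top row $1,3,\ldots,2k-1,2k+1,2k+2,\ldots,n$ and bottom row $2,4,\ldots,2k$, so the $j$th column is $(2j-1,2j)$, which is precisely the $j$th undotted arc. Now $\mathrm{Col}(T_0)=\mathrm{Undot}(M_0)$ and $e_{M_0}=e_{T_0}$, and your argument goes through. This is the $M_0$ the paper uses.
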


\begin{proof} Both $V_{\lambda}$ and $W_{\lambda}$ are $S_n$-representations in the vector space of tabloids.  This means that their intersection (in the vector space generated by tabloids) is also an $S_n$ representation.  
We know that $V_{\lambda}$ is an irreducible representation, so the only subspaces of $V_{\lambda}$ that are preserved by the $S_n$ action are $0$ and $V_{\lambda}$.  Hence $V_{\lambda} \cap W_{\lambda}$ is either $0$ or $V$.    
                                                                               
Let $M_0$ be the unnested standard noncrossing matching with $n-k$ undotted arcs on the left and $k$ dotted arcs on the right.  Let $T_0$ be the standard tableau with $2,4,6,8,10,\ldots, 2n-2k$ in the second row.  The tableau $T_0$ corresponds to $M_0$ under the bijection of Section \ref{Springer topology}.  In this case the columns are exactly the undotted arcs and so $e_{M_0} = e_{T_0}$.

This shows that $V_{\lambda} \cap W_{\lambda}$ contains at least one nonzero vector, and hence is $V_{\lambda}$.  Since $V_{\lambda}$ and $W_{\lambda}$ have the same dimension, we conclude that $V_{\lambda} = V_{\lambda} \cap W_{\lambda} = W_{\lambda}$.  By Proposition \ref{GP identification of Springer rep} this is the Springer representation.
\end{proof}

\begin{conjecture}
The Kazhdan-Lusztig basis for irreducible representations of $S_n$ is defined to be the geometric basis induced by the representation on the top-degree cohomology of the Springer fiber.  This is the basis we identified in top degree.  We conjecture that in fact the matching basis is the Kazhdan-Lusztig basis in all degrees.
\end{conjecture}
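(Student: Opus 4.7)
The plan is to verify that the matching basis $\{e_M\}$ and the Kazhdan-Lusztig basis of $V_{(n-k,k)}$ implement the same matrix representation of $S_n$, then use Schur's lemma together with a single normalization check to conclude that the two bases coincide under the bijection $\varphi$. The key observation is that the KL basis for two-row Specht modules admits an explicit combinatorial description (via the Temperley-Lieb algebra and work of Kazhdan-Lusztig, Westbury, Frenkel-Khovanov, and others), and the rules for the $s_i$-action in this description should mirror the four cases of Figure \ref{snact}.

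First, I would record the classical KL action on $V_{(n-k,k)}$ in a form compatible with the bijection $\varphi$ of Theorem \ref{bijection between tableaux and matchings}: indexed by standard tableaux of shape $(n-k,k)$, with $s_i$ acting as $+1$ when $i$ and $i+1$ lie in the same row, as $-1$ when they lie in the same column, and as $C_T \mapsto C_T + C_{s_i T}$ (with appropriate conventions for which tableau plays the role of $T$) otherwise. Second, I would translate each case of Figure \ref{snact} through $\varphi$ and verify the correspondence: Case 1 (both $i$ and $i+1$ on dotted arcs) sends $M$ to $M$ and places $i$, $i+1$ in the same row of $\varphi(M)$; Case 2 (the undotted arc $(i,i+1)$) sends $M$ to $-M$ and places $i$, $i+1$ in the same column; in Cases 3 and 4 the image is $M + M'$, and the auxiliary matching $M'$ must be shown to satisfy $\varphi(M') = s_i \cdot \varphi(M)$. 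Third, having shown that the matching basis implements the KL matrix representation, I would invoke Schur's lemma on the irreducible $V_{(n-k,k)}$ to conclude that the two bases differ by a single global scalar. That scalar can be pinned to $1$ by testing a distinguished element; the natural candidate is the unnested matching $M_0$ whose $k$ undotted arcs $(1,2), (3,4), \ldots, (2k-1,2k)$ occupy the leftmost positions, for which $e_{M_0}$ coincides with the ordinary Specht generator of the row-superstandard tableau and should match the KL element with the same index (as in the proof of Theorem \ref{springer repn}).

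The main obstacle is pinning down the correct sign and normalization conventions for the KL basis. The KL basis is defined only up to the choice of Hecke parameter ($q$ versus $-q^{-1}$) and whether one uses the Springer representation or its tensor with the sign; the literature is not uniform on this point, and the formulas for the $s_i$-action in Figure \ref{snact} must be matched to the chosen convention without introducing sign errors. A secondary technical hurdle is that $\textup{Undot}(M)$ and $\textup{Col}(\varphi(M))$ are generally different subgroups of $S_n$ (they coincide only for matchings with no nestings, such as $M_0$), so $e_M$ is not literally the Specht generator attached to $\varphi(M)$; expanding $e_M$ in the Specht basis is required in order to compare with the KL expansion, and doing this expansion explicitly enough to extract the global normalization scalar in each degree is where the substantive combinatorial work will lie.
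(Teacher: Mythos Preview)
This statement is a conjecture in the paper; there is no proof to compare against. Your outline (match the two matrix representations and normalize via Schur's lemma and $M_0$) is a sound strategy, but your case analysis rests on an incorrect description of the Kazhdan--Lusztig action and would fail as written.

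The $W$-graph action of $s_i$ on the KL basis of a two-row Specht module is governed by descents, not by your same-row/same-column/otherwise trichotomy: $s_iC_T=-C_T$ if $i$ is a descent of $T$ (i.e.\ $i$ in row~$1$ and $i+1$ in row~$2$), and $s_iC_T=C_T+\sum_{T'}\mu(T',T)C_{T'}$ otherwise, where the $\mu$-sum does \emph{not} automatically vanish when $i,i+1$ share a row. Already for $n=4$, $k=2$, with $T_2$ the tableau having second row $\{3,4\}$, one finds $s_1C_{T_2}=C_{T_2}+C_{T_1}$ even though $1,2$ both lie in row~$1$; correspondingly $M_2$ (arcs $(1,4),(2,3)$) lies in Case~4 of Figure~\ref{snact} and $s_1M_2=M_2+M_1$, not $M_2$. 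Your translation ``Case~1 $\leftrightarrow$ same row'' is therefore only half right: Case~1 is exactly ``both $i,i+1$ in row~$1$'', while ``both in row~$2$'' (both right endpoints of nested undotted arcs) falls under Case~4 and contributes an extra term. Likewise Case~2 corresponds to ``$i$ is a descent of $\varphi(M)$'', which for two-row tableaux is strictly weaker than ``same column''. To salvage the argument you must replace your trichotomy by the descent/non-descent dichotomy and then verify that for each non-descent $i$ the $\mu$-sum is empty in Case~1 and consists of the single term $C_{\varphi(M')}$ in Cases~3 and~4; this is the substantive combinatorics, and it is most cleanly accessed through the identification of the two-row KL basis with the Temperley--Lieb cup-diagram basis (undotted arcs $\leftrightarrow$ cups, endpoints of dotted arcs $\leftrightarrow$ through-strands, $s_i-1\leftrightarrow e_i$ at loop value $-2$).
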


\section{appendix}

We review the relevant results from \cite{Cautis-Kamnitzer} and prove that Khovanov's construction of the Springer variety is diffeomorphic to the classical construction of the Springer variety.

Our notation uses that of \cite{Cautis-Kamnitzer} except where it conflicts with notation already used in this paper.  Let $X$ be a linear operator $X: \C^{2N} \rightarrow \C^{2N}$ with Jordan blocks of size $(N,N)$.  Fix a basis $e_1, \ldots, e_N, f_1, \ldots, f_N$ for $\C^{2N}$ so that $X (e_i)=e_{i-1}$ and $X(f_i) = f_{i-1}$ for each $i \geq 2$, and so that $\ker X$ is the span of $e_1, f_1$.  We use the Hermitian inner product with orthonormal basis $e_1, e_2, \ldots, e_N, f_1, f_2, \ldots, f_N$.  Assume that $N \geq 2n$ and define a partial Springer variety of nested vector subspaces of $\C^{2N}$ by
\[Y_n = \{(V_1 \subseteq V_2 \subseteq \cdots \subseteq V_n): \textup{ each } V_i \textup{ is $i$-dimensional and } XV_i \subseteq V_{i-1}\}.\]
Recall that $B^{n/2}$ is the set of all noncrossing matchings on $n$ vertices. Define
\[X'_n = \bigcup_{a \in B^{n/2}} S'_{a,n}\]
where 
\[S'_{a,n} = \{(x_1, x_2, \ldots, x_n) \in (S^2)^n: x_i = -x_j \textup{ if } (i,j) \in a\}.\]
(As before $-x$ denotes the image of $x$ under the antipodal map.)  

\cite[Theorem 2.1]{Cautis-Kamnitzer} proves
\begin{proposition} {\bf (Cautis-Kamnitzer)}  Choose an orthonormal basis $e,f$ for $\mathbb{P}^1$ and define a map $C: \mathbb{P}^{2N-1} \rightarrow \mathbb{P}^1$ so that
$$\textup{span}_{\C} \left( \sum a_i e_i + \sum b_i f_i \right) \mapsto \textup{span}_{\C}\left( \left(\sum a_i\right) e + \left( \sum b_i\right)  f\right) .$$  For each $V_{\bullet} \in Y_n$ define lines $L_1, \ldots, L_n$ by $V_i = V_{i-1} \oplus L_i$ and $L_i \perp V_{i-1}$.  Then the map $\ell: Y_n \rightarrow (\mathbb{P}^1)^n$ defined by
\[V_{\bullet} \mapsto (C(L_1), C(L_2), \ldots, C(L_n))\]
is a diffeomorphism.

Furthermore define $Z_n^i = \{V_{\bullet} \in Y_n: XV_{i+1} = V_{i-1}\}$.  Then the image of $Z_n^i$ under the diffeomorphism $\ell$ is exactly the elements in $(\mathbb{P}^1)^n$ 
satisfying $C(L_i) = -C(L_{i+1})$.
\end{proposition}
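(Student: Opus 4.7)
The plan is to construct an explicit inverse to $\ell$ by inductive lifting on $n$, then deduce that $\ell$ is a diffeomorphism from the fact that a holomorphic bijection between smooth complex manifolds of the same dimension is a biholomorphism. The $Z_n^i$ statement is then extracted by a direct computation in coordinates.

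First I would show that $Y_n$ is a smooth complex manifold of dimension $n$ by induction. The forgetful map $Y_n \to Y_{n-1}$ has fiber over $V_\bullet$ equal to the set of lines $L_n \subseteq V_{n-1}^\perp$ with $X L_n \subseteq V_{n-1}$, i.e., $\mathbb{P}(W_n)$ where $W_n := V_{n-1}^\perp \cap X^{-1}(V_{n-1})$. The dimension count is the heart of this step: since $V_{n-1} \subseteq \ker X^{n-1} \subseteq \textup{Im}(X)$ (using $N \geq 2n$), one gets $\dim X^{-1}(V_{n-1}) = (n-1) + \dim\ker X = n+1$, and since $V_{n-1} \subseteq X^{-1}(V_{n-1})$ (because $X V_{n-1} \subseteq V_{n-2} \subseteq V_{n-1}$), intersecting with $V_{n-1}^\perp$ gives $\dim W_n = 2$. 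Thus $Y_n$ is a holomorphic $\mathbb{P}^1$-bundle over $Y_{n-1}$, hence smooth of dimension $n$.

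Next I would verify that $C$ restricts to an isomorphism $\mathbb{P}(W_i) \xrightarrow{\sim} \mathbb{P}^1$ for every $V_{i-1}$ arising from the iteration. This is the main obstacle. Since both sides are $\mathbb{P}^1$, it suffices to show that $W_i \cap \ker C = 0$, where $\ker C$ is the $(2N-2)$-dimensional subspace of vectors whose $e$-coefficients and $f$-coefficients each sum to zero. The plan is to produce inductively an adapted basis $\{\tilde e_i, \tilde f_i\}$ of $W_i$ via a Gram--Schmidt-type recursion starting from $e_i, f_i$ and orthogonalizing against $V_{i-1}$, and then to verify using the explicit action $Xe_j = e_{j-1}$, $Xf_j = f_{j-1}$ that $C(\tilde e_i) = e$ and $C(\tilde f_i) = f$ up to nonzero scalars. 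The hypothesis $N \geq 2n$ is used precisely here: it guarantees that the leading $e_i, f_i$ components are never exhausted by the orthogonalization, so the recursion does not degenerate. Having established this bijection, the inductive construction of $\ell^{-1}$ is complete: given $(p_1,\dots,p_n)$, one chooses $V_i = V_{i-1} \oplus C^{-1}(p_i)$, and smoothness of the inverse is automatic from the biholomorphism criterion.

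Finally, for the $Z_n^i$ statement, I would compute directly. Writing $L_i = \langle v_i \rangle$ and $L_{i+1} = \langle v_{i+1} \rangle$ in the adapted bases of $W_i$ and $W_{i+1}$, the flag condition forces $X v_{i+1} \in V_i$, and $V_\bullet \in Z_n^i$ iff the $L_i$-component of $X v_{i+1}$ modulo $V_{i-1}$ vanishes. Expanding $v_i$ and $v_{i+1}$ in the orthonormal basis, tracking leading coefficients through $X$, and projecting via $C$, this vanishing condition reads as a single linear relation that becomes $C(L_i) + C(L_{i+1}) = 0$ in homogeneous coordinates on $\mathbb{P}^1$, which is exactly the antipodal relation $C(L_i) = -C(L_{i+1})$. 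The sign is the delicate point and depends on the orthogonality normalization built into the adapted basis.
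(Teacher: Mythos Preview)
The paper does not prove this proposition: it is quoted as \cite[Theorem 2.1]{Cautis-Kamnitzer}, with the attribution ``(Cautis--Kamnitzer)'' in the statement and no proof environment following it. There is therefore nothing in this paper to compare your proposal against; the authors simply import the result and then use it in the subsequent lemmas of the appendix.

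As a standalone sketch your outline is broadly in the spirit of the Cautis--Kamnitzer argument (iterated $\mathbb{P}^1$-bundle structure on $Y_n$, then identifying each fiber with $\mathbb{P}^1$ via $C$). One point to watch: the lines $L_i$ are defined using the \emph{Hermitian} orthogonal complement, which is not holomorphic in $V_\bullet$, so you cannot invoke ``holomorphic bijection $\Rightarrow$ biholomorphism'' for $\ell$ without further work. You would either need to show directly that $\ell$ and its inverse are smooth from the fiberwise isomorphisms, or recast the $i$th coordinate of $\ell$ as a map on the holomorphic quotient $V_i/V_{i-1}$ rather than on the Hermitian complement $L_i$.
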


For each noncrossing matching $a$ and each $n$ let $C_{a,n}$ denote the preimage $\ell^{-1}(S'_{a,n})$.

Let $a$ be a noncrossing matching on $n$ vertices containing the arc $(i,i+1)$ and let $a'$ be a noncrossing matching on $n-2$ vertices. Suppose $a'$ is obtained from $a$ by erasing the arc $(i,i+1)$, so that if $f: \{1,2,\ldots, n\} \rightarrow \{1,2,\ldots,n-2\}$ is the map
\[\begin{array}{rcl}
	f(j) &=& \left\{ \begin{array}{l} j \textup{ if } j \leq i-1 \textup{ and } \\
						     j-2 \textup{ if } j \geq i+2\end{array} \right. \end{array}\]
then $a' = \{ (f(j), f(j')): (j,j') \in a' \textup{ and } (j,j') \neq (i,i+1)\}$.  Define the projection map $q': S'_{a,n} \rightarrow S'_{a',n-2}$ by $q'(x_1, x_2, \ldots, x_n) = (x_1, x_2, \ldots, \widehat{x_i}, \widehat{x_{i+1}}, \ldots, x_n)$ where $\widehat{x_j}$ omits the $j^{th}$ coordinate.

In \cite[Section 2.2]{Cautis-Kamnitzer} Cautis-Kamnitzer define a map $q: Z^i_n \rightarrow Y_{n-2}$ by \[V_{\bullet} \mapsto (V_1, \ldots, V_{i-1}, XV_{i+2}, XV_{i+3}, \ldots, XV_n)\] 
and prove that $Z^i_n$ is a ${\mathbb{P}}^1$-bundle over $Y_{n-2}$. 

\begin{lemma}
Let $a$ be a noncrossing matching on $n$ vertices containing the arc $(i,i+1)$ and let $a'$ be a noncrossing matching on $n-2$ vertices. Suppose $a$ is obtained from $a'$ by erasing the arc $(i,i+1)$.  Then there is a commutative diagram
\[\begin{array}{rcl}
C_{a,n} & \stackrel{\ell}{\rightarrow} & S'_{a,n} \\
\textup{\tiny{$q$}} \downarrow & & \downarrow {\textup{\tiny{$q'$}}} \\
Y_{n-2} &  \stackrel{\ell}{\rightarrow} & S'_{a',n-2}\end{array}\]
and the image $q(C_{a,n})$ is $C_{a',n-2}$.
\end{lemma}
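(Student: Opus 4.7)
The plan is to prove commutativity of the diagram first, since both directions of the image identification $q(C_{a,n})=C_{a',n-2}$ then follow from it, together with the $\mathbb{P}^1$-bundle result of Cautis-Kamnitzer that $q:Z_n^i\to Y_{n-2}$ is surjective with $\mathbb{P}^1$-fibers.

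For commutativity, I would take $V_\bullet\in C_{a,n}$ and chase the diagram. Since $(i,i+1)\in a$, the image $\ell(V_\bullet)=(x_1,\dots,x_n)\in S'_{a,n}$ satisfies $x_i=-x_{i+1}$, and by the second part of Cautis-Kamnitzer's proposition this forces $V_\bullet\in Z_n^i$, so $q$ is defined on $V_\bullet$. Composing gives $q'(\ell(V_\bullet))=(x_1,\dots,\widehat{x_i},\widehat{x_{i+1}},\dots,x_n)$. Going the other way around, set $W_\bullet=q(V_\bullet)=(V_1,\dots,V_{i-1},XV_{i+2},\dots,XV_n)$ and let $L'_j$ be the orthogonal complement of $W_{j-1}$ in $W_j$, so that $\ell(W_\bullet)=(C(L'_1),\dots,C(L'_{n-2}))$. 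For $j<i$ one has $W_j=V_j$ and hence $L'_j=L_j$, giving $C(L'_j)=x_j$ immediately. The substantive step is showing $C(L'_j)=C(L_{j+2})$ for $j\geq i$, which amounts to verifying that the Cautis-Kamnitzer coordinate $C$ is compatible with the operation of replacing $V_{j+1}\subseteq V_{j+2}$ by $XV_{j+1}\subseteq XV_{j+2}$ (and the analogous replacement at $j=i$, where $W_{i-1}=V_{i-1}=XV_{i+1}$ by the $Z_n^i$-condition). Since $\ker X\subseteq V_{i+1}\subseteq V_{j+1}$ and $L_{j+2}\perp V_{j+1}$, the line $L_{j+2}$ meets $\ker X$ trivially, so $XL_{j+2}$ is a line in $XV_{j+2}$, and one checks by a direct computation in the basis $\{e_k,f_k\}$ that its image under $C$ agrees with that of the orthogonal projection spanning $L'_j$.

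For the image identification, the inclusion $q(C_{a,n})\subseteq C_{a',n-2}$ is immediate from commutativity: the antipodal relations for all arcs of $a$ other than $(i,i+1)$ become, after deleting coordinates $i$ and $i+1$ and renumbering by $f$, precisely the defining relations of $S'_{a',n-2}$. For the reverse inclusion, take $W_\bullet\in C_{a',n-2}$. Cautis-Kamnitzer's $\mathbb{P}^1$-bundle structure on $q:Z_n^i\to Y_{n-2}$ provides a preimage $V_\bullet\in Z_n^i$ of $W_\bullet$. Such a $V_\bullet$ automatically satisfies $C(L_i)=-C(L_{i+1})$ by the second part of Cautis-Kamnitzer's proposition, matching the arc $(i,i+1)\in a$. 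The commutativity established in Step 1 then guarantees that the remaining coordinates of $\ell(V_\bullet)$ agree with those of $\ell(W_\bullet)\in S'_{a',n-2}$, so $V_\bullet$ meets all antipodal constraints of $a$. Hence $V_\bullet\in C_{a,n}$ and $q(V_\bullet)=W_\bullet$.

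The main obstacle is the identity $C(L'_j)=C(L_{j+2})$ for $j\geq i$. The lines $L'_j$ and $L_{j+2}$ are not equal as lines in $\mathbb{C}^{2N}$, and $L'_j$ is not simply $XL_{j+2}$ either; rather it is the orthogonal projection of $XL_{j+2}$ onto $(XV_{j+1})^\perp\cap XV_{j+2}$. Verifying that applying $X$ and reprojecting leaves the projective class $C(L_{j+2})$ fixed requires an explicit calculation in the coordinates coming from the basis $e_1,\dots,e_N,f_1,\dots,f_N$, using the precise form of $C$ and the fact that the orthogonal complements $L_{j+2}$ avoid $\ker X=\mathrm{span}(e_1,f_1)$ for $j+2\geq 2$. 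Once this calculation is done, Steps 2 and 3 assemble without further difficulty.
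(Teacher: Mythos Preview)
Your overall plan matches the paper's: prove $q'\circ\ell=\ell\circ q$ on $C_{a,n}$ and then deduce the image statement. But you have one misconception and one unnecessarily elaborate step.

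\textbf{The misconception.} You write that ``$L'_j$ is not simply $XL_{j+2}$ either; rather it is the orthogonal projection of $XL_{j+2}$ onto $(XV_{j+1})^\perp\cap XV_{j+2}$.'' In fact $L'_j = XL_{j+2}$ exactly, and this is the key computation in the paper's proof. The point is that $X$ restricted to $(\ker X)^\perp$ is an isometry onto its image: if $v=\sum_{k\geq 2}a_k e_k+b_k f_k$ and $w$ is arbitrary, then in the chosen orthonormal basis one has
\[
\Bigl\langle \sum_{k\geq 2}a_k e_k+b_k f_k,\; w\Bigr\rangle
=\Bigl\langle \sum_{k\geq 2}a_k e_{k-1}+b_k f_{k-1},\; Xw\Bigr\rangle,
\]
so $L_{j+2}\perp V_{j+1}$ immediately gives $XL_{j+2}\perp XV_{j+1}$. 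Since $\ker X\subseteq V_{i+1}\subseteq V_{j+1}$, the line $L_{j+2}$ avoids $\ker X$, hence $XL_{j+2}$ is a line, and by dimension it must equal $L'_j$. Thus no reprojection is needed at all, and the identity $C(L'_j)=C(L_{j+2})$ reduces to the observation (which you already have) that $C(XL)=C(L)$ whenever $L\perp\ker X$.

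\textbf{The image step.} The paper handles $q(C_{a,n})=C_{a',n-2}$ more directly than you propose: once commutativity is known, surjectivity of $q'$ is trivial (insert any antipodal pair in positions $i,i+1$), so $\ell\circ q$ maps $C_{a,n}$ onto $S'_{a',n-2}$, and since $\ell$ is a diffeomorphism this gives $q(C_{a,n})=\ell^{-1}(S'_{a',n-2})=C_{a',n-2}$. Your route via the $\mathbb{P}^1$-bundle lift also works, but note it requires commutativity on all of $Z_n^i$, not just on $C_{a,n}$; fortunately the commutativity argument uses only $V_\bullet\in Z_n^i$, so this is fine.
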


\begin{proof}
We will show $q' \circ \ell = \ell \circ q$.  Then since $\ell$ is a diffeomorphism and $q' \circ \ell$ is surjective, the image $q(C_{a,n)}$ is diffeomorphic to $S'_{a',n}$.  By definition $q(C_{a,n}) = C_{a',n-2}$.

By construction the $i^{th}$ and $(i+1)^{th}$ coordinates of $S'_{a,n}$ are antipodes of each other so $C_{a,n} \subseteq Z^i_n$.  If $V_{\bullet} \in Z^i_n$ then $V_{i+1} = X^{-1}V_{i-1}$ and in particular $\ker X \subseteq V_{i+1}$.  Since the line $L_j$ is defined so that $L_j \perp V_{j-1}$ and $V_j = L_j \oplus V_{j-1}$ for each $j$, we conclude that $L_j \perp \ker X$ for $j \geq i+2$.  Thus $L_j$ is spanned by $\sum_{i \geq 2} a_i e_i + b_i f_i$ for some coefficients $a_i, b_i$.  So $C(L_j) = C(XL_j)$ for all $j \geq i+2$.

We confirm that $XL_j \perp XV_{j-1}$ for $j \geq i+2$.  The line $L_j$ is the span of $\sum_{i \geq 2} a_i e_i + b_i f_i$ so if $w \in V_{j-1}$ then since $e_i, f_i$ are an orthonormal basis the inner product satisfies 
\[\left \langle \sum_{i \geq 2} a_i e_i + b_i f_i, w \right \rangle = \left \langle \sum_{i \geq 2} a_i e_{i-1} + b_i f_{i-1}, Xw \right \rangle.\]
This shows that if $V_{\bullet} \in C_{a,n}$ then
\[\begin{array}{rcl}
\ell(q(V_{\bullet})) &=& (C(L_1), C(L_2), \ldots, C(L_{i-1}), C(XL_{i+2}), C(XL_{i+3}), \ldots, C(XL_n)) \\
 &=& (C(L_1), C(L_2), \ldots, C(L_{i-1}), C(L_{i+2}), C(L_{i+3}), \ldots, C(L_n)) \\
 &=& q'(\ell(V_{\bullet})).\end{array}\]
\end{proof}

The next lemma proves that the $C_{a,n}$ are the components of the Springer variety $X_n$.

\begin{lemma}
The union $\bigcup_{a\in B^{n/2}} C_{a,n}$ is isomorphic to the $(n/2, n/2)$ Springer variety in the vector space $\ker X^{n/2}$.  Each $C_{a,n}$ is an irreducible component of the Springer variety.
\end{lemma}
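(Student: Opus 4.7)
The plan is to identify $\bigcup_{a\in B^{n/2}} C_{a,n}$ set-theoretically with $\mathrm{Spr} := \{V_\bullet \in Y_n : V_n = \ker X^{n/2}\}$, the natural copy of the $(n/2,n/2)$ Springer variety in $Y_n$ obtained by extending any flag in $\ker X^{n/2}$ by the full space $\ker X^{n/2}$. Both assertions of the lemma then follow at once.

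I would first prove the containment $\bigcup_a C_{a,n} \subseteq \mathrm{Spr}$ by induction on $n$. The base case $n=2$ is immediate: the only matching is $\{(1,2)\}$ and $C_{\{(1,2)\},2} = Z^1_2$ by the Cautis-Kamnitzer proposition, so $V_2 \subseteq \ker X$ and equality holds by dimension. For the inductive step, pick a short arc $(i, i+1) \in a$ (every noncrossing matching has an innermost arc of this form) and let $a'$ be $a$ with this arc removed. If $V_\bullet \in C_{a,n}$ then $V_\bullet \in Z^i_n$, so $XV_{i+1} = V_{i-1}$ and hence $V_{i+1} = X^{-1}V_{i-1} \supseteq \ker X$, giving $\ker X \subseteq V_n$ and $\dim XV_n = n-2$. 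The previous lemma gives $q(V_\bullet) \in C_{a',n-2}$, so by the inductive hypothesis $XV_n$ equals $\ker X^{n/2-1}$. Therefore $V_n \subseteq X^{-1}\ker X^{n/2-1} = \ker X^{n/2}$, with equality by dimension.

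For the reverse containment, I would run a dimension-and-component count. The Cautis-Kamnitzer proposition sends $C_{a,n}$ diffeomorphically onto $S'_{a,n} \cong (\mathbb{P}^1)^{n/2}$, so each $C_{a,n}$ is a closed irreducible subvariety of complex dimension $n/2$, and distinct matchings produce distinct $S'_{a,n}$ (their antipodal defining equations differ) and hence distinct $C_{a,n}$. On the other hand, classical Spaltenstein theory together with Equation~\eqref{dimension formula} says $\mathrm{Spr}$ is equidimensional of complex dimension $n/2$ with irreducible components in bijection with standard Young tableaux of shape $(n/2,n/2)$ — equivalently, by Theorem~\ref{bijection between tableaux and matchings}, with $B^{n/2}$. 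Thus we have $|B^{n/2}|$ distinct closed irreducible top-dimensional subvarieties sitting inside an equidimensional variety with exactly $|B^{n/2}|$ top-dimensional components; each $C_{a,n}$ must therefore be an irreducible component of $\mathrm{Spr}$, and their union exhausts $\mathrm{Spr}$.

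The main obstacle will be the dimension bookkeeping in the inductive step: I need $\dim XV_n = n-2$ so that $q(V_\bullet)$ lands in $Y_{n-2}$ as a genuine flag, and I need $V_{i+1} = X^{-1}V_{i-1}$ (not merely $\subseteq$) in order to conclude $\ker X \subseteq V_{i+1}$. The latter follows from $V_{i-1} \subseteq \ker X^{i-1} \subseteq \mathrm{Im}\,X$, which uses the standing hypothesis $N \geq 2n$. Once these dimension checks are in hand, the remainder of the argument is essentially a component count.
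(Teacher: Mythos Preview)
Your proposal is correct and follows essentially the same approach as the paper: an induction on $n$ using the map $q$ and the short arc $(i,i+1)$ to show each $C_{a,n}$ lies in the Springer variety, followed by a dimension and component count (via $\ell$ and the bijection between $B^{n/2}$ and components) to conclude equality. One minor simplification: your worry about $V_{i+1} = X^{-1}V_{i-1}$ is resolved more directly by noting $V_{i-1} = XV_{i+1} \subseteq \mathrm{Im}\,X$, so no appeal to $N \geq 2n$ or to $V_{i-1} \subseteq \ker X^{i-1}$ is needed.
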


\begin{proof}
The proof inducts on $n$.  When $n=2$ there is a unique crossingless matching $a' = (1,2)$.  In this case $C_{a',2} = Z^1_2 = \{V_{\bullet} \in Y_2: V_2 = X^{-1}V_0\}$.  Equivalently 
\[C_{a',2} = \{ V_1 \subseteq V_2: V_2 = \ker X \textup{ and } XV_j \subseteq V_j \textup{ for all }j\},\]
which means $C_{a',2}$ is isomorphic to the $(1,1)$ Springer variety.

Assume that the claim holds for $n-2$.  Let $(i,j)$ be an arc in $a$ with minimal distance $j-i$; if $j-i \neq 1$ then $a$ is not a noncrossing matching.  Let $a'$ be the matching with $(i,i+1)$ removed, so $C_{a,n} \stackrel{q}{\rightarrow} C_{a',n-2}$.  If $q(V_{\bullet}) = V_{\bullet}'$ then for all $j$ there exists $k$ with $V_j \subseteq X^{-1} V_k'$.  By the inductive hypothesis each $V_k' \subseteq \ker X^{(n-2)/2}$ and so each $V_j \subseteq \ker X^{n/2}$.  Thus for all noncrossing matchings $a$, each $C_{a,n}$ is contained in the $(n/2, n/2)$ Springer variety inside $\ker X^{n/2}$.  Since $\ell$ is a diffeomorphism we conclude that all $C_{a,n}$ are compact irreducible subvarieties of the Springer variety of dimension $\dim X_n$, namely each $C_{a,n}$ is a component of the Springer variety.  Moreover if $a \neq b$ then $C_{a,n} \neq C_{b,n}$ because $\ell$ is a diffeomorphism.  The noncrossing matchings are in bijection with the components of the Springer variety so $\bigcup_{a \in B^{n/2}} C_{a,n}$ is isomorphic to the $(n/2, n/2)$ Springer variety in the vector space $\ker X^{n/2}$.
\end{proof}

The previous two lemmas allow us to conclude:

\begin{theorem}
The $(n/2, n/2)$ Springer variety $X_n$ is diffeomorphic to $\bigcup_{a \in B^{n/2}} S'_{a,n}$.
\end{theorem}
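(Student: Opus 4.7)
The plan is to assemble the theorem almost immediately from the two preceding lemmas together with the Cautis-Kamnitzer diffeomorphism $\ell$. The second lemma already establishes that $\bigcup_{a \in B^{n/2}} C_{a,n}$, sitting inside $Y_n$ (or more precisely inside the slice cut out by $\ker X^{n/2}$), is isomorphic to the classical $(n/2, n/2)$ Springer variety $X_n$. The first lemma plus the definition $C_{a,n} = \ell^{-1}(S'_{a,n})$ tells us that the map $\ell$ restricts to a bijection between $C_{a,n}$ and $S'_{a,n}$ for every noncrossing matching $a$.

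The strategy is therefore: first invoke the second lemma to identify $X_n$ with $\bigcup_{a \in B^{n/2}} C_{a,n}$ as subvarieties of $Y_n$. Next, observe that $\ell$ is a global diffeomorphism $Y_n \to (\mathbb{P}^1)^n$, so its restriction to the closed subvariety $\bigcup_a C_{a,n}$ is a diffeomorphism onto its image. By the definition of $C_{a,n}$ as $\ell^{-1}(S'_{a,n})$, this image is exactly $\bigcup_{a \in B^{n/2}} S'_{a,n}$. Composing the two identifications gives the desired diffeomorphism
\[X_n \;\cong\; \bigcup_{a \in B^{n/2}} C_{a,n} \;\stackrel{\ell}{\longrightarrow}\; \bigcup_{a \in B^{n/2}} S'_{a,n}.\]

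There is essentially no real obstacle remaining; the content of the proof has been absorbed into the two preceding lemmas. The only point requiring a sentence of care is to check that the restriction of $\ell$ to a union of closed subvarieties really does give a diffeomorphism onto the corresponding union, which follows immediately because $\ell$ is a diffeomorphism on the ambient space $Y_n$ and maps the closed set $\bigcup_a C_{a,n}$ bijectively onto the closed set $\bigcup_a S'_{a,n}$. If desired one could note that $(\mathbb{P}^1)^n$ and $(S^2)^n$ are canonically identified so that $S'_{a,n}$ as defined on spheres matches the image of $C_{a,n}$ under $\ell$, but this identification is already built into the notation of the appendix.
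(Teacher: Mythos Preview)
Your proposal is correct and matches the paper's approach exactly: the paper simply writes ``The previous two lemmas allow us to conclude'' and states the theorem, leaving the reader to assemble precisely the argument you wrote out. One small remark: the commutative-diagram lemma is not really needed directly here (it is a tool inside the proof of the second lemma); all you actually use is the second lemma, the definition $C_{a,n} = \ell^{-1}(S'_{a,n})$, and the Cautis--Kamnitzer fact that $\ell$ is a global diffeomorphism.
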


Finally we observe this is diffeomorphic to Khovanov's construction of the Springer variety.

\begin{theorem}
The $(n/2, n/2)$ Springer variety $X_n$ is diffeomorphic to $\bigcup_{a \in B^{n/2}} S_{a,n}$.
\end{theorem}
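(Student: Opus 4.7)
The plan is to construct an explicit diffeomorphism of $(S^2)^n$ that carries the previous theorem's space $\bigcup_{a \in B^{n/2}} S'_{a,n}$ onto Khovanov's $\bigcup_{a \in B^{n/2}} S_{a,n}$. The recipe is suggested directly by the definitions: $S'_{a,n}$ is cut out by equations $x_i = -x_j$ for $(i,j) \in a$, while $S_{a,n}$ is cut out by $x_i = x_j$ for $(i,j) \in a$. Lemma \ref{parity} guarantees that in any noncrossing matching each arc has exactly one odd and one even endpoint, so applying the antipodal map to the coordinates of a single parity converts $x_i = -x_j$ into $x_i = x_j$.

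Concretely, I would define $\phi: (S^2)^n \to (S^2)^n$ by
\[\phi(x_1, x_2, \ldots, x_n) = ((-1)^{\epsilon_1} x_1, (-1)^{\epsilon_2} x_2, \ldots, (-1)^{\epsilon_n} x_n)\]
where $\epsilon_i = 0$ if $i$ is odd and $\epsilon_i = 1$ if $i$ is even (either parity choice works). This is a coordinatewise product of the identity and the antipodal map on $S^2$, hence a diffeomorphism of $(S^2)^n$ (indeed an involution).

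The key step is then to check that $\phi(S'_{a,n}) = S_{a,n}$ for every noncrossing matching $a \in B^{n/2}$. Fix an arc $(i,j) \in a$ with $i < j$. By Lemma \ref{parity} exactly one of $i,j$ is even, so exactly one of the corresponding coordinates is negated by $\phi$. Thus if $(x_1,\ldots,x_n) \in S'_{a,n}$ satisfies $x_i = -x_j$, then the image $(y_1,\ldots,y_n) = \phi(x_1,\ldots,x_n)$ satisfies $y_i = y_j$: whichever of the two coordinates had its sign flipped supplies the missing minus sign. This shows $\phi(S'_{a,n}) \subseteq S_{a,n}$; the reverse inclusion follows from the same calculation applied to $\phi^{-1} = \phi$. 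Since $\phi$ is a diffeomorphism of the ambient space $(S^2)^n$, its restriction to the union $\bigcup_a S'_{a,n}$ is a diffeomorphism onto the union $\bigcup_a S_{a,n}$.

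Combining this with the previous theorem, which established $X_n \cong \bigcup_a S'_{a,n}$ via the map $\ell$ of Cautis--Kamnitzer, gives the desired diffeomorphism $X_n \cong \bigcup_a S_{a,n}$. No serious obstacle is expected; the entire content of the argument is the parity observation, which was already isolated as Lemma \ref{parity}. The only thing to be careful about is that $\phi$ must be consistently defined across all components simultaneously (it is not defined arc-by-arc), but this is automatic since $\phi$ depends only on the position indices, not on the matching $a$.
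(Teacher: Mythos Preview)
Your proposal is correct and follows essentially the same approach as the paper: the paper uses the antipodal map $\gamma$ from Section~\ref{antipodal embeddings} (which negates the odd coordinates rather than the even ones, but as you note either parity works) and invokes Lemma~\ref{parity} in exactly the same way to show that $\gamma$ carries $\bigcup_a S_{a,n}$ diffeomorphically onto $\bigcup_a S'_{a,n}$.
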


\begin{proof}
We prove that $\bigcup_{a \in B^{n/2}} S'_{a,n}$ is diffeomorphic to $\bigcup_{a \in B^{n/2}} S_{a,n}$ via the antipodal map $\gamma$ of Section \ref{antipodal embeddings}.  If $a$ is a noncrossing matching then each arc $(i,j) \in a$ has exactly one even and one odd endpoint by Lemma \ref{parity}, so $\gamma$ will be the identity on exactly one coordinate $i,j$ and the antipodal map on the other.  Hence if $(x_1, \ldots, x_n) \in \bigcup_{a \in B^{n/2}} S_{a,n}$ then $\gamma(x_1, \ldots, x_n) \in \bigcup_{a \in B^{n/2}} S'_{a,n}$.  The differentiable map $\gamma$ is its own inverse and so is a diffeomorphism.
\end{proof}

\end{document}